\numberwithin{equation}{section}
\newtheorem{theorem}[equation]{Theorem}
\newtheorem{lemma}[equation]{Lemma}
\newtheorem{proposition}[equation]{Proposition}
\newtheorem{corollary}[equation]{Corollary}
\theoremstyle{definition}
\newtheorem*{ack}{Acknowledgements}
\newtheorem*{con}{Conventions}
\newtheorem{remark}[equation]{Remark}
\newtheorem{example}[equation]{Example}
\newtheorem{definition}[equation]{Definition}
 \numberwithin{figure}{section}
\DeclareMathOperator{\Aut}{Aut}
\DeclareMathOperator{\Spec}{Spec}
\DeclareMathOperator{\an}{an}
\DeclareMathOperator{\Hom}{Hom}
\DeclareMathOperator{\Br}{Br}
 \DeclareMathOperator{\Out}{Out}
 \DeclareMathOperator{\Isom}{Isom}
\DeclareMathOperator{\Ideg}{Ideg}
\let\Im\relax
\DeclareMathOperator{\Im}{Im}
\DeclareMathOperator{\lien}{band}
\newcommand{\Qbar}{\overline{\QQ}}
\newcommand{\thickslash}{\mathbin{\!\!\pmb{\fatslash}}}
\newcommand\ZZ{\mathbb{Z}}
\newcommand\NN{\mathbb{N}}
\newcommand\QQ{\mathbb{Q}}
\newcommand\CC{\mathbb{C}}
\newcommand\GG{\mathbb{G}}
\newcommand\Gm{\GG_\mathrm{m}}
\newcommand\OO{\mathcal{O}}
\newcommand\HH{\mathrm{H}}
\newcommand{\dan}[1]{{\color{blue} \sf $\clubsuit\clubsuit\clubsuit$ Dan: [#1]}}
\title[Arithmetic hyperbolicity]{Arithmetic hyperbolicity and a stacky Chevalley--Weil theorem}
\author{Ariyan Javanpeykar}
\address{Ariyan Javanpeykar \\
Institut f\"{u}r Mathematik\\
Johannes Gutenberg-Universit\"{a}t Mainz\\
Staudingerweg 9, 55099 Mainz\\
Germany.}
\email{peykar@uni-mainz.de}
\author{Daniel Loughran}
\address{Daniel Loughran \\
Department of Mathematical Sciences \\
University of Bath \\
Claverton Down \\
Bath \\
BA2 7AY \\
UK.}
\subjclass[2010]
{14G99 
(11G35,  
14G05,  
32Q45)} 
\keywords{Integral points, gerbes, arithmetic hyperbolicity, Shafarevich conjecture}
\begin{document}

\begin{abstract}
 We prove  an analogue for  algebraic stacks of Hermite--Minkowski's finiteness theorem from algebraic number theory, and establish   a Chevalley--Weil type theorem for integral points on stacks.
As an application of our results,  we  prove analogues of the Shafarevich conjecture for some surfaces of general type.
\end{abstract}

\maketitle
\tableofcontents

\thispagestyle{empty}

\section{Introduction}
Algebraic stacks are an important tool in modern algebraic geometry which naturally arise in the study of moduli problems.
In this paper we study various arithmetic properties of stacks defined over number fields and their rings of integers.
We obtain analogues for stacks of some classical theorems in algebraic number theory and arithmetic geometry, and  give 
  applications to the study of integral points on various moduli stacks.

\subsection{Hermite--Minkowski for stacks}
A classical result in algebraic number theory is the theorem of Hermite--Minkowski. A geometric
way to phrase this is:~given a positive integer $n$, a number field $K$, and a dense open subset $U \subset \Spec \OO_K$, the scheme $U$ admits only finitely many
isomorphism classes of finite \'{e}tale covers of degree at most $n$ (versions of this are also known for $\ZZ$-finitely generated subrings of $\CC$ \cite{smallness}).

Our first theorem is a generalisation of this result to algebraic stacks. One has to be careful however with formulating the correct statement.
Firstly, as finite morphisms are  by definition representable, to get interesting stacks one needs to study \emph{proper} \'{e}tale morphisms. Secondly,  extra phenomena appear in the case of  stacks due to  their \emph{inertia groups} (see \S \ref{sec:degree}). For example, if $X\to Y$ is a proper \'etale morphism, then its degree $\deg(X\to Y$) is only a \emph{rational} number in general. Thirdly, bounding the degree is not sufficient:~for every prime power $p^n$, the map $B(\ZZ/\varphi(p^n)\ZZ) \to \Spec \ZZ[\mu_{p^n},1/p] \to \Spec \ZZ[1/p]$ is a proper \'etale morphism of degree $1$. One therefore also needs to bound what we call the \emph{inertia degree} $\Ideg(X/Y)$  (see \S \ref{sec:degree} for a precise definition). Bearing these considerations in mind, our result is the following.

\begin{theorem} [Hermite--Minkowski for stacks]\label{thm:HM_intro} 
Let $A\subset \mathbb C$ be a    $\ZZ$-finitely generated subring with fraction field $K:=\mathrm{Frac}(A)$, and let $n \in \NN$. Then the set of $K$-isomorphism classes of  proper \'{e}tale integral algebraic stacks ${X}$ over $A$ such that  $$\max\{\deg(X/A), \Ideg(X/A)\} \leq n$$  is finite.
\end{theorem}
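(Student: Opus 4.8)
The plan is to reduce the statement, through the coarse moduli space, to Hermite--Minkowski for schemes (\cite{smallness}) together with the finiteness of \'etale cohomology with finite coefficients. Since $K$-isomorphism classes are counted and normalisation does not change the fraction field, one may assume $A$ is normal; it then suffices to bound the number of $A$-isomorphism classes, each such $X$ being a model of its generic fibre. Note that any $X$ as in the statement is a Deligne--Mumford stack (being \'etale over the scheme $\Spec A$, hence with unramified diagonal) that is separated and of finite type over the Noetherian scheme $\Spec A$; so by Keel--Mori it admits a coarse moduli space $\pi\colon X \to Y$ with $Y$ proper over $\Spec A$.

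The first and main step will be to prove that $Y \to \Spec A$ is finite \'etale and that $\pi$ is a gerbe banded by a finite \'etale band. I would work \'etale-locally on $Y$: by the local structure theory of separated Deligne--Mumford stacks one has $X\times_Y V \cong [U/G]$ for some \'etale $V \to Y$, finite group $G$, and affine scheme $U$ with $U \to V$ finite. Since $U \to [U/G]$ is finite \'etale of degree $|G|$ and $[U/G] = X\times_Y V$ is \'etale over $\Spec A$, the scheme $U$ is \'etale over $\Spec A$; on each connected component $U_0$ of $U$ the stabiliser $G_0\subseteq G$ acts through $G_0/N_0$, where $N_0 = \ker(G_0 \to \Aut_A(U_0))$, and $G_0/N_0$ acts \emph{freely} on $U_0$, because a non-identity automorphism of $U_0$ over $\Spec A$ has fixed locus an open and closed --- hence empty --- subscheme of the connected scheme $U_0$. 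Therefore $[U/G]$ is a disjoint union, over the $G$-orbits of connected components of $U$, of gerbes banded by the constant groups $\underline{N_0}$ over the finite \'etale $\Spec A$-schemes $U_0/(G_0/N_0)$. It follows that $Y$ is \'etale-locally a disjoint union of such schemes, hence finite \'etale over $\Spec A$; comparing geometric fibres, $\deg(Y/A)\leq n\cdot\deg(X/A)\leq n^2$. By Hermite--Minkowski for schemes there are then only finitely many possibilities for $Y$, and the same local description exhibits $\pi\colon X \to Y$ as a gerbe whose band $\mathcal{G}$ is \'etale-locally constant --- hence finite \'etale --- with $|\mathcal{G}|$ bounded fibrewise by $\Ideg(X/A)\leq n$.

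The next step is to bound, for each of the finitely many $Y$, the number of such gerbes. There are only finitely many admissible bands $\mathcal{G}$ on $Y$: a band of order $\leq n$ is \'etale-locally a finite \'etale group scheme, there are finitely many finite \'etale group schemes over $Y$ of order $\leq n$ (finitely many finite \'etale covers of $Y$ of degree $\leq n$, again by Hermite--Minkowski for schemes applied to the finite \'etale $\Spec A$-scheme $Y$, each carrying only finitely many group-scheme structures as $Y$ has finitely many components), and the finiteness persists after passing to bands. For a fixed band $\mathcal{G}$, Giraud's classification identifies the set of $\mathcal{G}$-gerbes on $Y$, when nonempty, with a torsor under $H^{2}_{\et}(Y,Z(\mathcal{G}))$, where $Z(\mathcal{G})$ is the finite \'etale abelian centre of $\mathcal{G}$; this group is finite by finiteness of \'etale cohomology of $\ZZ$-finitely generated rings with finite coefficients (it is here that $A\subset\CC$, i.e.\ flatness over $\ZZ$, is used, to exclude the positive-characteristic pathologies coming from Artin--Schreier theory). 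Since every such gerbe is automatically proper and \'etale over $\Spec A$, this yields finitely many $X$ for each pair $(Y,\mathcal{G})$, and hence finitely many in total.

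The hard part will be the first step: the geometric d\'evissage showing that the coarse space is finite \'etale over $A$ and that $\pi$ is a finite \'etale gerbe, which requires some care with the local structure theory of Deligne--Mumford stacks and with \'etale and fppf descent. Once this d\'evissage is available, the rest is bookkeeping resting on two finiteness inputs --- Hermite--Minkowski for schemes and finiteness of $H^2$ with finite coefficients. It is worth stressing that the integral model does essential work: the cohomology that enters is that of $Y$ itself, which is finite, whereas over the generic fibre $H^2(Y_K,Z(\mathcal{G}))$ can well be infinite (for instance $\Br(\QQ)[n]$); the existence of the proper \'etale $A$-model is precisely what forces finiteness, in exact analogy with the classical Hermite--Minkowski theorem.
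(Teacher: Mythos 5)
Your overall route is essentially the paper's: split $X$ into a finite \'etale scheme over $A$ (your coarse space $Y$ plays the role of the paper's rigidification $R$) and a proper \'etale gerbe over it, bound the possible bases by classical Hermite--Minkowski, and classify the gerbes by their band together with Giraud's $\HH^2(Y,Z)$. The genuine gap is the final cohomological input. You assert that $\HH^2(Y,Z(\mathcal{G}))$ is finite because $A\subset\CC$ is flat over $\ZZ$, so that ``Artin--Schreier pathologies'' are excluded. That is not justified: flatness over $\ZZ$ only controls the generic fibre, and when $\dim A>1$ the closed fibres of $\Spec A$ are positive-dimensional varieties in characteristic $p$; for $p$-torsion coefficients with $p$ not invertible on $A$, finiteness of $\HH^2$ of such an arithmetic scheme is precisely the case the paper flags as unknown (see the remark following Lemma \ref{lem:H^1_finite}). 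This is why Theorems \ref{thm:HM_for_gerbes} and \ref{Cor:HM} carry an invertibility hypothesis in dimension $>1$, and why the paper's proof of Theorem \ref{thm:HM_intro} exploits the fact that only $K$-isomorphism classes are counted: one shrinks $\Spec A$ to an affine open on which the relevant degree is invertible before invoking the gerbe classification. You use that freedom only to normalise $A$; the repair inside your own argument is to also invert, say, all primes $\le n$, after which Lemma \ref{lem:H^1_finite}(3) gives the needed finiteness. As written, however, your proof rests on an unproven finiteness statement.

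A second, smaller error is your count of bands. You argue there are finitely many admissible bands because there are finitely many finite \'etale group schemes of order $\le n$ over $Y$ ``and the finiteness persists after passing to bands''. This presumes that every band of a proper \'etale gerbe over $Y$ is of the form $\lien(H)$ for a finite \'etale group scheme $H$ over $Y$, or at least that bands are dominated by group schemes; that is false, since the map $\HH^1(Y,\Aut(G))\to\HH^1(Y,\Out(G))$ need not be surjective, and Remark \ref{remark:kresch} exhibits a proper \'etale gerbe whose band comes from no group scheme over the base. The correct count is direct: bands locally isomorphic to $\lien(G)$ are classified by $\HH^1(Y,\Out(G))$ (Lemma \ref{lem:class_of_band}), which is finite by the same Hermite--Minkowski argument you already accept, so the conclusion survives but not by your route. (The same conflation appears locally when you describe $[U_0/G_0]$ as banded by the constant group $N_0$: the band may be twisted by the outer action of $G_0/N_0$ on $N_0$.) Your d\'evissage through the coarse space itself is sound and essentially reproduces the paper's rigidification lemma, with Keel--Mori plus the local structure theory replacing the paper's appeal to rigidification and Knutson's criterion.
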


Here we say that two such stacks $X/A$ and $Y/A$ are $K$-isomorphic if there is an isomorphism  $X_K \cong Y_K$ defined over $K$.
Theorem \ref{thm:HM_intro} is proved by reducing to two cases: finite \'{e}tale morphisms, where this is the usual Hermite--Minkowski, and the case of \emph{gerbes},
which we handle using non-abelian cohomology.

Note that Theorem \ref{thm:HM_intro} may be viewed as a common generalisation of the theorem of Hermite--Minkowski, and the fact that the $n$-torsion of  $\Br \OO_K[S^{-1}]$ is finite for any $n$ and any finite set of finite places $S$ (this follows by viewing Brauer group elements as gerbes). The latter finiteness is well-known from class field theory and can be deduced from the fundamental exact sequence \cite[Ex.~6.9.4]{Poo17} .

\subsection{Chevalley--Weil for stacks}
The Hermite--Minkowski theorem has many applications in number theory, and we expect our version for stacks to have similar applications.  
We give one such application here, which is a version of the Chevalley--Weil theorem for stacks; see \cite{CW} or \cite[\S 4.2] {Ser97} for a formulation of the classic Chevalley--Weil theorem.

\begin{theorem} [Chevalley--Weil for stacks]\label{thm:cv_intro} Let $A\subset \mathbb C$ be an integrally closed $\ZZ$-finitely generated subring, and let $Y$ be a finite type separated Deligne--Mumford   stack over $A$. Let $X\to Y$ be a  morphism such that $X_{\CC}\to Y_{\CC}$ is proper \'etale surjective.
Suppose that for every $\ZZ$-finitely generated  subring $A \subset B \subset \CC$ which is \'etale over $A$,
the groupoid $X(B)$ is finite. Then the groupoid    $Y(A)$ is finite.
\end{theorem}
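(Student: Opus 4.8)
The plan is to deduce the Chevalley--Weil statement for stacks from the Hermite--Minkowski theorem (Theorem~\ref{thm:HM_intro}), mimicking the classical strategy: points of $Y(A)$ pull back, via the cover $X\to Y$, to covers of $\Spec A$ of bounded degree, of which there are only finitely many, and each such cover carries only finitely many relevant points. First I would spread out: since $A$ is a $\ZZ$-finitely generated integrally closed domain and $X_\CC\to Y_\CC$ is proper \'etale, after inverting finitely many elements of $A$ I may assume $X\to Y$ itself is proper \'etale with $\deg(X/Y)$ and $\Ideg(X/Y)$ bounded by some $n$; enlarging $A$ to such an $A[f^{-1}]$ only shrinks $Y(A)$ and is harmless for the conclusion (a finiteness statement for $Y(A)$ follows from the same for $Y(A[f^{-1}])$ once one checks the fibre over a point of $\Spec A$ removed is controlled --- in fact it is cleanest to first prove the statement for $Y$ a finite type separated DM stack over the localised ring and then note $Y(A)\to Y(A[f^{-1}])$ has finite fibres because $Y$ is separated of finite type, or simply absorb the localisation into the hypothesis since the hypothesis on $X(A')$ is stable under enlarging $A'$).

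The core step: given a point $y\in Y(A)$, form the fibre product $X_y := X\times_{Y,y}\Spec A$. Since $X\to Y$ is proper \'etale, $X_y\to\Spec A$ is a proper \'etale algebraic stack over $A$ with $\deg(X_y/A)\le \deg(X/Y)\le n$ and $\Ideg(X_y/A)\le\Ideg(X/Y)\le n$ (here I use the behaviour of degree and inertia degree under base change, which should be recorded in \S\ref{sec:degree}). By Theorem~\ref{thm:HM_intro}, as $y$ ranges over $Y(A)$ the stacks $X_y$ fall into finitely many $K$-isomorphism classes. Fix one such class, represented by a proper \'etale $Z/A$; it suffices to show that the set of $y\in Y(A)$ with $X_y\cong Z$ (as $A$-stacks, or at least after a finite extension) is finite.

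For such $y$, choose a $\ZZ$-finitely generated subring $A\subset A'\subset\CC$ over which the isomorphism $X_y\cong Z$ is defined; one can arrange a single $A'$ working for all $y$ in the given class by a further spreading-out/Noetherianity argument, or argue class-by-class. Then $Z(A')\to X_y(A')$ and composing with $X_y(A')\to X(A')$ gives a point of $X(A')$ lying over $y_{A'}\in Y(A')$; more precisely, the canonical section $\Spec A\to X_y$ (the diagonal, coming from $y$ itself and the identity) provides a point of $X_y(A)\subset X_y(A')$, hence via $X_y\cong Z$ a point of $Z(A')$, and via $X_y\to X$ a point of $X(A')$ whose image in $Y(A')$ is $y$. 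Since $X(A')$ is finite by hypothesis, and the map $X(A')\to Y(A')\supset Y(A)$ is a morphism of finite groupoids (so has finite image), and $Y(A)\hookrightarrow Y(A')$ (as $A$ is integrally closed and $Y$ separated, this injectivity on isomorphism classes is the usual valuative-criterion argument), we conclude that only finitely many $y\in Y(A)$ arise from the fixed class $Z$. Summing over the finitely many classes gives that $Y(A)$ is finite.

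The main obstacle I anticipate is the bookkeeping around stacky points rather than scheme-theoretic points: ensuring that the fibre $X_y$ genuinely has bounded inertia degree (not just degree), that the ``tautological section'' of $X_y$ exists and lands in $X(A')$ after identifying with $Z$, and that one can choose the auxiliary ring $A'$ uniformly --- the hypothesis is phrased for \emph{every} $\ZZ$-finitely generated $A'$, which gives flexibility, but one must still check that for a fixed isomorphism class of cover the descent data are all defined over one common $A'$, or else run the hypothesis for a countable union of $A'$'s and invoke that $X(A')$ is finite for each. A secondary technical point is the precise sense in which $Y(A)\to Y(A')$ is injective on isomorphism classes for $Y$ a separated DM stack over an integrally closed base; this is standard (uniqueness in the valuative criterion of separatedness, applied over $\Spec A'\to\Spec A$ faithfully flat or at least dominant), but needs to be stated carefully since $Y$ is only assumed finite type and separated, not proper.
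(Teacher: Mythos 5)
Your overall strategy is close in spirit to the paper's (the paper proves an arithmetic-hyperbolicity version of Chevalley--Weil, Theorem \ref{thm:chev_weil}, by exactly this pull-back-plus-Hermite--Minkowski-plus-uniform-trivialization argument, and then finishes with the twisting lemma, Theorem \ref{thm:equivalences}), but two steps in your write-up fail. First, there is no ``canonical section'' of $X_y = X\times_{Y,y}\Spec A \to \Spec A$: a section of $X_y\to \Spec A$ is precisely a lift of $y$ to $X(A)$, and the possible non-existence of such a lift is the whole difficulty (if it always existed, $\pi_0(X(A))\to\pi_0(Y(A))$ would be essentially surjective and you would not need Hermite--Minkowski at all). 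What is true, and what the paper uses, is that the fibres $X_y$ fall into finitely many isomorphism classes \emph{over the base} (for this one needs Theorem \ref{Cor:HM}, finiteness up to $B$-isomorphism, rather than Theorem \ref{thm:HM_intro}, which only gives finiteness up to $K$-isomorphism; with only $K$-isomorphism classes your uniform choice of $A'$ does not go through, a point you half-anticipate but do not resolve). Once the classes are finite over $A$, one fixed $\ZZ$-finitely generated $A'\supset A$ can be chosen so that every $X_y$ acquires an $A'$-point, and this is what lifts $y_{A'}$ to $X(A')$.

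Second, and more seriously, your final claim that $\pi_0(Y(A))\to\pi_0(Y(A'))$ is injective is false for stacks: $A'$ has a strictly larger fraction field than $A$, and non-isomorphic objects over $A$ can become isomorphic over $A'$ (twists). For instance, for $Y = B(\ZZ/2\ZZ)_A$, non-isomorphic quadratic torsors over $A$ become isomorphic over a suitable $A'$; the paper's discussion of $B\mu_2$ and Remark \ref{remark:M_1} illustrate exactly this phenomenon. The valuative/ZMT argument you invoke only yields injectivity of $\pi_0(Y(A))\to\pi_0(Y(K))$ with $K=\mathrm{Frac}(A)$ (this is step $(2)\Rightarrow(3)$ of Theorem \ref{thm:equivalences}), not into $Y(A')$. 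Consequently your argument only bounds the image of $\pi_0(Y(A))$ in $\pi_0(Y(A'))$ --- i.e.\ it proves arithmetic hyperbolicity of $Y_\CC$ --- but not the finiteness of $\pi_0(Y(A))$ itself, which is the assertion of the theorem. Closing this gap requires the twisting step the paper isolates in Theorem \ref{thm:equivalences}: for a fixed $y$, the $A$-points of $Y$ that become isomorphic to $y$ over $\CC$ are controlled by torsors under the finite \'etale inertia group $I_y$, hence by a subset of $\HH^1(\Spec A, I_y)$, which is finite by Lemma \ref{lem:H^1_finite} (this uses that $Y$ is separated Deligne--Mumford and $A$ integrally closed, so that the relevant Isom-schemes are finite and their generic sections extend). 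With that addition, and the corrected lifting step, your direct route essentially reassembles the paper's proof of Theorem \ref{thm:chev_weil} combined with the twisting lemma.
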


In the classical version of Chevalley--Weil, one takes $A=\OO_{K}[S^{-1}]$, the ring of $S$-integers of some number field $K$ for some suitable large set of places $S$, and $X$ and $Y$ are models of some varieties over $\OO_K$. The theorem states that given $P \in Y(A)$, there exists a finite extension $K \subset L$ which is unramified outside of $S$ such that $P$ comes from an element of $X(L)$. Combining this with Hermite--Minkowski and assuming that $X$ has only finitely many $\OO_{L}[T^{-1}]$-integral points for every finite field extension $L/K$ and every finite set of finite places $T$ of $L$, one deduces that $Y(A)$ is finite.

Our stacky version (Theorem \ref{thm:cv_intro}) also concerns integral points and is the generalisation of this latter finiteness result. However, for stacks, $Y(A)$ is a groupoid in general, and not necessarily a set; thus one needs to quotient out by isomorphisms to get meaningful finiteness statements (we say that a groupoid $\mathcal{G}$ is \emph{finite} if the set of isomorphism classes of objects of $\mathcal{G}$ is finite).

Many stacks naturally arise as moduli stacks of varieties. If $Y$ has such a modular interpretation, then the finiteness of $Y(A)$ can be interpreted in terms of analogues of the \emph{Shafarevich conjecture}  on the finiteness of varieties  with good reduction outside a given set of places. (This was  originally proved by Faltings for curves and abelian varieties in \cite{Faltings2, FaltingsComplements}.)

To explain the connection between finiteness of integral points on stacks and the aforementioned Shafarevich conjecture, let $g \in \NN$ and let $\mathcal{A}_g$ be the   stack of principally polarised abelian schemes over $\ZZ$. For a number field $K$ and a finite set of places $S$ of $K$, an object $A$ in $  \mathcal{A}_g(\OO_{K}[S^{-1}])$ is a principally polarised abelian scheme over $\OO_{K}[S^{-1}]$; the corresponding abelian variety $A_K$ over $K$ therefore has good reduction outside of $S$.  It is easy to see that the essential image of the natural morphism of groupoids
\begin{equation} \label{Faltings}
	\mathcal{A}_g(\OO_{K}[S^{-1}]) \to \mathcal{A}_g(K)
\end{equation}
consists of exactly those principally polarised abelian varieties of dimension $g$ over $K$ with good reduction outside of $S$. In particular, an equivalent way to   state Faltings's finiteness theorem is that the image of \eqref{Faltings} is finite. (In fact the groupoid $\mathcal{A}_g(\OO_{K}[S^{-1}])$ itself is also finite, as the functor \eqref{Faltings} is fully faithful due to the properties of the N\'eron model of an abelian variety.) Thus, as this example shows, it is very natural to    study   integral points on stacks  in arithmetic geometry.

\subsection{Arithmetic hyperbolicity}  The stacky Chevalley--Weil theorem is part of a more general study in this paper of the phenomenon of \emph{arithmetic hyperbolicity} for stacks. A variety $X$ over an algebraically closed field $k$ of characteristic zero is called arithmetically hyperbolic over $k$ if,  for every $\ZZ$-finitely generated subring $A\subset k$ and every finite type separated scheme $\mathcal{X}$ over $A$ with $\mathcal{X}_k\cong X$, the set of $A$-valued points $\mathcal{X}(A)$ on $\mathcal{X}$ is finite. We extend this notion to stacks in \S \ref{section:arithmetic_hyperbolicity}. To state the more general definition, for $\mathcal{G}$ a (small) groupoid, we let $\pi_0(\mathcal{G})$ denote the set of isomorphism classes of objects of $\mathcal{G}$.   Then, a finitely presented algebraic stack $X$ over $k$ is \emph{arithmetically hyperbolic over $k$} if, for all $\ZZ$-finitely generated subrings $A\subset k$ and every finitely presented algebraic stack    $\mathcal X$  over $A$ endowed with an isomorphism $\mathcal{X}_k\cong X$, the set $\Im[\pi_0(\mathcal X(A))\to \pi_0(\mathcal X(k))]$ is finite; this is equivalent to Definition \ref{defn:arithmetic_hyperbolicity} by Lemma \ref{lem:arithm_hyp}. The Chevalley--Weil theorem for stacks (as formulated in Theorem \ref{thm:cv_intro}) is in fact a consequence of the following more general statement (stated as Theorem \ref{thm:chev_weil}).

\begin{theorem}\label{thm:chev_weil_intro}
	Let $f:X\to Y$ be a proper \'etale surjective morphism of finitely presented algebraic stacks over an algebraically closed field $k$ of characteristic $0$. Then $X$ is arithmetically hyperbolic over $k$ if and only if $Y$ is arithmetically hyperbolic over $k$.
\end{theorem}

Further properties of arithmetically hyperbolic varieties are obtained in \cite{Jaut, JLitt}, building on the results in \S \ref{section:arithmetic_hyperbolicity}.

\subsection{Applications}
As an application of our results, we obtain a finiteness result for certain surfaces of general type. By arguing directly with the stack, the proof of the following result becomes an application of classification results due to Beauville \cite[Appendix]{DebarreClass}  for such surfaces and  our stacky Chevalley--Weil theorem. 
 For a smooth proper surface $S$ over $\CC$, we denote by $q(S) = \dim_{\CC} \mathrm{H}^1(\mathcal{O}_S)$ its irregularity and $p_g(S) = \dim_{\CC} \mathrm{H}^0(K_S)$ its geometric genus.

\begin{theorem}\label{thm:shaf_for_gt} Let $A\subset \CC$ be an integrally closed $\ZZ$-finitely generated subring and let $q \geq 4$ be an integer. Then
the set of $A$-isomorphism classes of   smooth proper surfaces $X$ over $A$ with $\omega_{X/A}$ relatively ample, such that $q(X_{\CC}) = q$ and $p_g(X_\CC) = 2q(X_\CC) - 4$, is finite.  
\end{theorem}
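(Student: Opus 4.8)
The plan is to recast the statement as a finiteness statement for the groupoid of integral points of a moduli stack, and then to invoke the stacky Chevalley--Weil theorem (Theorem~\ref{thm:cv_intro}). Fix $q\geq 4$ and let $\mathcal{M}$ be the stack over $A$ whose objects over an $A$-scheme $S$ are the smooth proper morphisms $X\to S$ of relative dimension $2$ with $\omega_{X/S}$ relatively ample and with $q(X_{\bar s})=q$ and $p_g(X_{\bar s})=2q-4$ for every geometric point $\bar s$ of $S$. A smooth surface with ample canonical sheaf is of general type and equals its own canonical model, so (using uniqueness of canonical models for separatedness, finiteness of automorphism groups of surfaces of general type for the Deligne--Mumford property, and boundedness of surfaces of general type with $\chi(\OO)=q-3$ fixed for finite type) $\mathcal{M}$ is a finite type separated Deligne--Mumford stack over $A$, realised as a finite union of substacks of Gieseker moduli stacks of canonically polarised surfaces. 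With this notation, the theorem asserts exactly that $\mathcal{M}(A)$ is finite.

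To apply Theorem~\ref{thm:cv_intro} I would take $Y=\mathcal{M}$ and $X=\mathcal{M}_2\times_A\mathcal{M}_{q-2}$, with $\Phi\colon X\to Y$ the product-of-curves morphism $(C_1,C_2)\mapsto C_1\times C_2$; this lands in $\mathcal{M}$, since $q(C_1\times C_2)=2+(q-2)=q$ and $p_g(C_1\times C_2)=2(q-2)=2q-4$ by K\"unneth, and $\omega_{C_1\times C_2/S}=p_1^{*}\omega_{C_1/S}\otimes p_2^{*}\omega_{C_2/S}$ is relatively ample. The heart of the proof is then to show that $\Phi_{\CC}$ is proper and \'etale. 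Surjectivity onto $\mathcal{M}_{\CC}$ is precisely Beauville's classification \cite[Appendix]{DebarreClass} of smooth minimal complex surfaces of general type with $p_g=2q-4$ and $q\geq 4$ as products of a genus-$2$ curve with a genus-$(q-2)$ curve. For \'etaleness I would use that $C_1$ and $C_2$, having genus $\geq 2$, admit no nonzero vector fields, so $T_{C_1\times C_2}=p_1^{*}T_{C_1}\oplus p_2^{*}T_{C_2}$ gives a canonical isomorphism $\HH^1(C_1\times C_2,T_{C_1\times C_2})\cong\HH^1(C_1,T_{C_1})\oplus\HH^1(C_2,T_{C_2})$, whence $\Phi_{\CC}$ is an isomorphism on tangent spaces; combined with the fact that a product of curves of genus $\geq 2$ has unobstructed deformations, all again products (the infinitesimal counterpart of Beauville's theorem, in the spirit of Catanese's analysis of surfaces isogenous to a product), this makes the source and target of $\Phi_{\CC}$ smooth of the same dimension $3q-6$, so $\Phi_{\CC}$ is \'etale. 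For properness I would extend $\Phi_{\CC}$ to $(C_1,C_2)\mapsto C_1\times C_2$ on Deligne--Mumford stable curves, which lands in the Koll\'ar--Shepherd-Barron--Alexeev stack of stable surfaces (a product of stable curves being a stable surface with ample canonical divisor), observe that this extension is proper as a morphism of proper stacks, and recognise $\Phi_{\CC}$ as its base change along the open immersion $\mathcal{M}_{\CC}\hookrightarrow\overline{\mathcal{M}}^{\,\mathrm{KSBA}}_{\CC}$, since a product of curves is smooth exactly when both factors are. (One finds $\Phi_{\CC}$ is an isomorphism when $q>4$ and a degree-$2$ cover when $q=4$, reflecting $C_1\times C_2\cong C_2\times C_1$.)

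Granting that $X_{\CC}\to Y_{\CC}$ is proper \'etale, the remaining hypothesis of Theorem~\ref{thm:cv_intro} is the finiteness of $X(A')=\mathcal{M}_2(A')\times\mathcal{M}_{q-2}(A')$ for every $\ZZ$-finitely generated subring $A\subseteq A'\subset\CC$; this is the Shafarevich conjecture for curves, due to Faltings \cite{Faltings2}, in the form valid over $\ZZ$-finitely generated domains (obtained from the case of rings of $S$-integers by spreading out, equivalently the arithmetic hyperbolicity of $\mathcal{M}_g$ over $\ZZ$). Theorem~\ref{thm:cv_intro} then yields that $\mathcal{M}(A)=Y(A)$ is finite, which is the assertion of the theorem.

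I expect the main obstacle to be the geometry of $\Phi_{\CC}$: upgrading Beauville's classification of the complex points to the statement that the product-of-curves morphism is finite, \'etale and surjective requires both the deformation theory of products of curves --- unobstructedness and rigidity of the product structure --- for \'etaleness, and the behaviour of such products under stable degeneration for properness. A secondary, more bookkeeping-type issue is to set up $\mathcal{M}$ so that $\mathcal{M}(A)$ matches the set in the statement on the nose, in particular pinning down which numerical invariants are held fixed.
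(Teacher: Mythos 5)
Your overall strategy coincides with the paper's: interpret the statement as finiteness of $\pi_0(\mathcal{S}_{2q-4,q}(A))$ for the moduli stack of canonically polarised surfaces with these invariants, map $\mathcal{M}_{2}\times\mathcal{M}_{q-2}$ to it by taking products, use Beauville's classification \cite[Appendix]{DebarreClass} for surjectivity, show the product morphism is finite \'etale over $\CC$, feed in Shafarevich--Faltings for curves, and conclude by stacky Chevalley--Weil. The paper packages this as Theorem \ref{thm:ar_hyp_gt} (arithmetic hyperbolicity of $\mathcal{S}_{2q-4,q,\CC}$, via Theorem \ref{thm:chev_weil}) followed by the twisting lemma (Theorem \ref{thm:equivalences}), and it obtains finite \'etaleness of the product morphism by citing \cite[Thm.~1.1]{Bhatt} together with connectedness of the target; your deformation-theoretic argument for \'etaleness (no vector fields, K\"unneth on $T_{C_1\times C_2}$, unobstructedness and rigidity of the product structure) and your KSBA-degeneration argument for properness are essentially a sketch of that cited input, so while the facts you invoke are genuine, they are themselves nontrivial results rather than routine verifications --- you correctly flag this as the main obstacle.

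There is one genuine gap in your reduction: you apply Theorem \ref{thm:cv_intro} with $Y=\mathcal{M}$ and justify the hypothesis that $\mathcal{M}$ is Deligne--Mumford \emph{over $A$} by finiteness of automorphism groups of surfaces of general type. Finiteness of the automorphism group scheme only yields a quasi-finite (indeed affine) diagonal; Deligne--Mumfordness requires the automorphism group schemes to be unramified, and over the positive-characteristic closed points of $\Spec A$ the automorphism scheme of a canonically polarised surface can be non-reduced, so DM-ness of the integral model is not something you may assume (this is exactly the pitfall the paper records in the remark following Theorem \ref{thm:equivalences}, about natural models over $\ZZ$ failing to be Deligne--Mumford; note also that Beauville's classification, which would force the fibres to be products of curves, is only available over $\CC$). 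The paper circumvents this by using only that $\mathcal{S}_{2q-4,q}$ is finite type, separated, with \emph{affine} diagonal over $\ZZ$ --- which holds because it parametrises polarised varieties --- together with DM-ness of the $\CC$-fibre, and then invoking part (3) of the twisting lemma. Your argument is repaired in the same way: having deduced (via Theorem \ref{thm:chev_weil}) that $\mathcal{M}_{\CC}$ is arithmetically hyperbolic from the finiteness of $\mathcal{M}_2(A')\times\mathcal{M}_{q-2}(A')$, apply Theorem \ref{thm:equivalences}(3) to the separated affine-diagonal model over $A$ instead of Theorem \ref{thm:cv_intro}, whose DM hypothesis over $A$ you have not verified.
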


Other applications of  the Chevalley--Weil theorem (Theorem \ref{thm:cv_intro}) arise in the case of moduli stacks which are proper \'etale gerbes over other moduli stacks. Such situations arise  quite naturally, and we give various       applications of the stacky  Chevalley--Weil  theorem to such moduli stacks in \cite{JLalg2}.

\subsection*{Outline of the paper} In \S\ref{sec:degree} we define and study the basic properties of the degree and inertia degree of a morphism of stacks, together with some properties of proper \'etale morphisms. In \S\ref{sec:HM} we prove Theorem \ref{thm:HM_intro} and in \S\ref{section:arithmetic_hyperbolicity} we develop the general theory of arithmetic hyperbolicity for algebraic stacks. Theorem \ref{thm:cv_intro} is proved     in \S\ref{sec:CW}. Moreover, in the same section we deduce Theorem \ref{thm:chev_weil_intro} from a more general result (see Theorem \ref{thm:chev_weil}).  Finally, \S\ref{sec:appplications} contains applications of our results, including the proof of Theorem \ref{thm:shaf_for_gt} and a purely ``transcendental'' criterion for a stack to be arithmetically hyperbolic (Theorem \ref{thm:criterion}).


\begin{ack}  We thank Johan Commelin for helpful discussions on \S \ref{section:gt}. We thank Andrew Kresch and Siddharth Mathur for many helpful discussions on gerbes, especially \S \ref{section:classifying_gerbes}. We are grateful to Kenneth Ascher, Kai Behrend,  Oliver Benoist, Yohan Brunebarbe,   Fr\'ed\'eric Campana,  Francois Charles,  Bas Edixhoven, Robin de Jong, Daniel Litt,  Behrang Noohi, Martin Olsson, David Rydh, and Angelo Vistoli for helpful discussions.  We thank the referee for a careful reading of our paper and useful comments. The first-named author gratefully acknowledges  support of   SFB/Transregio 45.  The second-named author is supported by EPSRC grant EP/R021422/1.

\end{ack}

\begin{con}
For a number field $K$, we let $\OO_K$ denote its ring of integers. If $S$ is a finite set of finite places of $K$, we let $\OO_K[S^{-1}]$ denote the ring of $S$-integers of $K$. 

If $k$ is a field, a \textit{variety over $k$} is a finite type separated $k$-scheme. 

An \emph{arithmetic scheme} is a finite type flat scheme over $\ZZ$.

	We abbreviate \emph{quasi-compact quasi-separated} as qcqs.

A  (small) groupoid $\mathcal G$ is finite if the set  $\pi_0(\mathcal G)$ of isomorphism classes of objects in $\mathcal{G}$ is a finite set. (Note that we do not ask for the automorphism group of an object of $\mathcal{G}$ to be finite.)

If $f:X\to Y$ is a  morphism of algebraic stacks, then we say that $f$ is quasi-finite if $f$ is finite type and locally quasi-finite (in the sense of \cite[Tag~06PU]{stacks-project}). If $f:X\to Y$ is quasi-finite,  for all geometric points $\overline{y} : \Spec \overline{k}\to Y$ of $Y$, the groupoid $X_{\overline{y}}(\overline{k})$ is finite.

  Concerning gerbes,  we follow the (standard) conventions of the stacks project \cite[Tag~06QB]{stacks-project}. 
If $G$ is a locally finitely presented flat group scheme over a scheme $S$, we let $BG = [S/G]$ be the classifying stack of $G$-torsors over $S$ (for the fppf topology). For a scheme $T$ over $S$, the objects of the groupoid $BG(T)$ are $G_T$-torsors over $T$.  In particular, there is a natural bijection $\pi_0(BG(T)) = \mathrm{H}^1(T,G_T)$. (All cohomology in this paper is taken with respect to the fppf topology). 

For $X$ a stack, we let $I_X$ be the (absolute) inertia stack of $X$. For $X\to Y$ a morphism of stacks, we let $I_{X/Y}$ be the relative inertia stack of $X\to Y$. We refer the reader to \cite[Tag~050P]{stacks-project} and \cite[Tag~04YX]{stacks-project} for precise definitions.

For an (abstract) group $G$ and a scheme $S$, we denote the associated constant group scheme over $S$ by $G_S$.

	Let $X$ be a finitely presented algebraic stack over a field $k$. Let $A\subset k $ be a subring. A \emph{model for $X$ over $A$} is a pair $(\mathcal X, \phi)$ with $\mathcal X$ a finitely presented algebraic stack over $A$ and $\phi: \mathcal X\times_A k\to X$ an isomorphism over $k$. We will usually omit $\phi$ from the notation.

A morphism $X \to Y$ of algebraic stacks is called \emph{representable} (resp.~\emph{strongly representable}) if for every scheme $Z$ and morphism $Z \to Y$, the fibre product $Z \times_Y X$ is an algebraic space (resp.~a scheme). We use slightly different terminology to the stacks project \cite[Tag 04XA]{stacks-project}.

A proper \'etale morphism is \emph{finite} \'etale if and only if it is strongly representable; see   \cite[Tag~0CHU]{stacks-project}.

\end{con}



\section{Degree and inertia degree of a morphism of stacks} \label{sec:degree}

 In this section we define and study the degree and inertia degree of a morphism of stacks. 

\subsection{Degree}
In \cite[Def.~1.15]{Vistoli} Vistoli defines the degree of a separated dominant morphism $X \to Y$ of finite type integral Deligne--Mumford   stacks. We explain how to generalise this definition.  

We let $f:X \to Y$ be a finitely presented Deligne--Mumford morphism of qcqs (= quasi-compact quasi-separated)  algebraic stacks with $Y$ integral. Let $V \to Y$ be a smooth dominant finitely presented morphism from an integral scheme $V$.

\subsubsection{$X,Y$ algebraic spaces} \label{sec:schemes}
We first recall the definition in the case where $X$ and $Y$ are algebraic spaces. Choose dense open subschemes $X^\circ \subset X$ and $ Y^\circ \subset Y$ (these exist by \cite[Tag 03JG]{stacks-project}). If $f$ is not generically finite then we define $\deg f = \infty$. Otherwise there exists an open dense subset $U \subset Y^{\circ}$ such that $f_{|U}$ is finite flat \cite[Tags 02NV, 0529]{stacks-project}. In this case, the sheaf $f_* \OO_X|_U$ is  locally free of finite rank on  $U$ \cite[Tag 02K9]{stacks-project}; we define $\deg f$ to be its rank. Note that $\deg f$ is a non-negative integer. In particular, the degree of a morphism to an integral scheme is  defined to be the degree of its generic fibre.

\subsubsection{$X \to Y$ representable} \label{sec:rep}
If $X \to Y$ is  representable, then the fibre product $X \times_Y V$ is an algebraic space over the scheme $V$.
Hence from \S\ref{sec:schemes} we may define 
$$\deg(X\to Y) :=\deg(X\times_Y V\to V).$$

\subsubsection{$X$ a DM stack, $Y$ a scheme} \label{sec:DM_scheme}
Next, assume that $Y$ is a scheme, so that $X$ is a Deligne--Mumford stack \cite[Tag 04YW]{stacks-project}. We assume further that $X$ is reduced.  To define the degree we sum over the irreducible components of $X$; in particular we may assume that $X$ is integral. Let $U \to X$ be a finitely presented \'etale dominant morphism with $U$ an integral scheme. Using \S\ref{sec:rep} we may then define
$$\deg(X\to Y) :=\frac{\deg(U \to Y)}{\deg(U \to X)},$$
as each morphism occurring on the right is  representable.

\subsubsection{$X \to Y$ a DM morphism} \label{sec:general}
We finally consider the general case with $X$  reduced. As $X \times_Y V$ is a Deligne--Mumford   stack over $V$ \cite[Tag 04YW]{stacks-project}, by \S\ref{sec:DM_scheme} we may define
$$\deg(X \to Y) = \deg( X \times_Y V \to V).$$

\subsubsection{Properties}
In the special case of a separated dominant morphism of finite type qcqs integral Deligne--Mumford   stacks, our definition recovers Vistoli's definition \cite[Def.~1.15]{Vistoli}.
A similar argument to \cite[Lem.~1.16]{Vistoli} shows that this definition is independent of our choices, and that given finitely presented Deligne--Mumford morphisms $X \to Y$ and $Y \to Z$ of reduced qcqs stacks with $Y$ and $Z$ integral, we have the multiplicative property 
\begin{equation} \label{eqn:degree_multiplicative}
	\deg(X \to Y)\deg(Y \to Z) = \deg(X \to Z).
\end{equation}
Note that the degree is preserved along any smooth dominant base change from an integral scheme.

\subsection{Inertia degree}
Our definition of inertia degree is the following.

\begin{definition}[Inertia degree]
Let $X \to Y$ be a finitely presented Deligne--Mumford morphism of integral qcqs algebraic stacks.  We define the \emph{inertia degree} of $X \to Y$ to be
	$$\Ideg(X \to Y) := \deg(I_{X/Y} \to X).$$
\end{definition}	
Here $I_{X/Y}$ denotes the relative inertia stack.
Note that $I_{X/Y} \to X$ is a finitely presented representable morphism, hence $\Ideg(X \to Y) \in \ZZ$ by \S\ref{sec:rep}. The inertia degree ``measures how far'' a morphism is from being  representable over some dense open substack of $Y$.


\subsection{Proper \'etale morphisms}
We now study the above notions of degree and inertia degree for a proper \'etale morphism $X\to Y$ of algebraic stacks.

\begin{lemma} \label{lem:inertia}
	Let $f:X \to Y$ be a proper \'etale morphism of   algebraic stacks.
	Then the relative inertia stack $I_{X/Y}$ is finite \'etale over $X$.
\end{lemma}
\begin{proof}
Since $X$ and $ X \times_Y {X}$ are proper \'etale over $Y$, the diagonal $\Delta:X\to X\times_Y X$ of $ X\to Y$ is proper \'etale \cite[Tag~0CIR]{stacks-project}. As the pull-back $X\times_{\Delta, X\times_Y X, \Delta} X$ of $\Delta$ along itself is the relative inertia stack $I_{X/Y}$ of $X\to Y$ (see \cite[Tag~034H]{stacks-project}),   the relative inertia stack $I_{X/Y}$ of $X\to Y$ is proper  \'etale over $X$. As $I_{X/Y}\to X$ is representable \cite[Tag~050P]{stacks-project} and proper \'etale, it is therefore  strongly representable  by Knutson's criterion \cite[Cor.~6.17]{Knutson}, hence finite \'etale as claimed.  
\end{proof}
	
	


\begin{lemma}\label{lem:degrees_of_BG_and_G} Let $S$ be an integral scheme. 
Let $G$ be a flat   finitely presented group scheme over $S$. Then $BG\to S$ is proper \'etale if and only if $G\to S$ is finite \'etale. In this case we have $\Ideg(BG\to S) = \deg(G \to S)$ and $$\deg(BG\to S) = \frac{1}{\deg(G\to S)}.$$
\end{lemma}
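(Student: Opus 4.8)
The plan is to deduce everything from the observation that the canonical atlas $\pi\colon S\to BG$, classifying the trivial $G$-torsor, is itself a $G$-torsor. Hence $\pi$ is faithfully flat and of finite presentation in general, and it is finite \'etale and surjective exactly when $G\to S$ is finite \'etale, a torsor under a finite \'etale group scheme being a finite \'etale scheme. The second ingredient I would record at the outset is the standard identification $\pi^{*}I_{BG/S}\cong G$ of $S$-schemes --- the automorphisms of the trivial $G_{T}$-torsor over a scheme $T$ form the group $G(T)$ --- under which $I_{BG/S}\times_{BG}S\to S$ becomes $G\to S$; note that $I_{BG/S}=I_{BG}$ because $S$ is a scheme.

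For the equivalence, the forward implication is immediate: if $BG\to S$ is proper \'etale, Lemma~\ref{lem:rigidification} gives that $I_{BG/S}\to BG$ is finite \'etale, and since $\pi$ is an fppf cover and ``finite \'etale'' is fppf-local on the target, the identification above forces $G\to S$ to be finite \'etale. For the converse I would assume $G\to S$ finite \'etale, so the structure morphism $p\colon BG\to S$ satisfies $p\circ\pi=\mathrm{id}_{S}$ with $\pi$ finite \'etale surjective, and transfer properties from $\mathrm{id}_{S}$ to $p$ along $\pi$: the morphism $p$ is smooth (smoothness being smooth-local on the source), and since its fibres $B(G_{\bar{s}})$ are $0$-dimensional it is \'etale; $p$ is of finite type, being locally of finite presentation (smooth) and quasi-compact (as $\mathrm{id}_{S}$ is and $\pi$ is surjective); and $p$ is universally closed, as $\pi$ is finite --- hence universally closed --- and surjective. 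The one point needing a separate idea is separatedness: base-changing the representable diagonal $\Delta_{BG/S}\colon BG\to BG\times_{S}BG$ along the fppf cover $\pi\times_{S}\pi\colon S\to BG\times_{S}BG$ yields $G\to S$, so $\Delta_{BG/S}$ is proper because $G\to S$ is. Altogether $p$ is proper \'etale.

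For the degree identities I would reduce to $S$ integral and set $d:=\deg(G\to S)$. Using the finite \'etale surjective atlas $\pi$ of $BG$, the representable morphism $I_{BG/S}\to BG$ becomes $G\to S$ after base change, whence $\Ideg(BG\to S)=\deg(I_{BG/S}\to BG)=d$. For $\deg(BG\to S)$ I would invoke the definition of the degree of a non-representable morphism with target atlas $V=S\xrightarrow{\mathrm{id}}S$ and source atlas $U=S\xrightarrow{\pi}BG=BG\times_{S}V$, obtaining
$$\deg(BG\to S)=\frac{\deg(U\to V)}{\deg(U\to BG\times_{S}V)}=\frac{\deg(\mathrm{id}_{S})}{\deg(\pi)}=\frac{1}{d},$$
where $\deg(\pi)=\deg(G\to S)=d$ follows from the definition applied to the atlas $\pi$ of $BG$, for which $S\times_{BG}S\cong G$ (equivalently, this last step is the multiplicativity \eqref{eqn:degree_multiplicative} for $S\xrightarrow{\pi}BG\xrightarrow{p}S$). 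Multiplying the two identities gives $\deg(BG\to S)\,\Ideg(BG\to S)=1$.

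I expect the only real subtlety to be keeping track of which properties of morphisms of algebraic stacks are local for the fppf, resp.\ smooth, topology on the source or on the target; in particular ``\'etale'' is \emph{not} fppf-local on the source (witness $B\Gm\to S$ with its smooth atlas), which is why the \'etaleness of $p$ is obtained via ``smooth of relative dimension $0$'' and genuinely uses that $\pi$ is finite \'etale. The two base-change identifications --- $\pi^{*}I_{BG/S}\cong G$ and the pullback of $\Delta_{BG/S}$ along $\pi\times_{S}\pi$ --- are routine but should be carried out carefully.
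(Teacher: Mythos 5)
Your argument is correct, and its skeleton matches the paper's: the forward implication is exactly the paper's (pull the finite \'etale inertia $I_{BG/S}\to BG$, supplied by Lemma~\ref{lem:rigidification}, back along the atlas $S\to BG$ and identify the result with $\Aut$ of the trivial torsor, i.e.\ with $G$), and the degree identities are obtained the same way the paper intends (the paper merely says they are ``clear from our constructions''; your computation $\Ideg(BG/S)=d$, $\deg(BG/S)=1/d$ via the atlas $\pi$ and $S\times_{BG}S\cong G$ is the honest version of that). The one genuine divergence is the converse. The paper gets \'etaleness directly from the identity $p\circ\pi=\mathrm{id}_S$ (\'etaleness being \'etale-local on the source), and for properness it simply cites Rydh's theorem that $BG\to S$ is a coarse moduli space map, hence proper. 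You instead prove properness by hand: universal closedness from the section together with surjectivity of $\pi$, finite type similarly, and separatedness by pulling the diagonal $\Delta_{BG/S}$ back along $\pi\times_S\pi$ to the proper morphism $G\to S$. Your route is more elementary and self-contained (no Keel--Mori/Rydh input), at the cost of a longer verification; the paper's is shorter but leans on a nontrivial external theorem. Your detour through ``smooth of relative dimension $0$'' for \'etaleness is also fine, though unnecessary given that \'etaleness can be checked on the \'etale atlas $\pi$, as the paper does.
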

\begin{proof}   
Assume that $BG\to S$ is proper \'etale.
Then,  the inertia stack $I_{BG}\to BG$ is finite \'etale (Lemma \ref{lem:inertia}). Consider the canonical morphism $S\to BG = [S/G]$ corresponding to the trivial torsor $G\to S$. Let $\mathrm{Aut}^G(G)$ be the automorphism group of the trivial $G$-torsor $G\to S$ (as an object in the category of $G$-torsors over $S$). By definition, we have a Cartesian diagram 
\[\xymatrix{   \mathrm{Aut}^G(G) \ar[rr] \ar[d] & & I_{BG} \ar[d]^{\textrm{finite \'etale}} \\ S \ar[rr] & & BG.
}\]    Thus, the morphism $\mathrm{Aut}^G(G)\to S$ is finite \'etale as it is the pull-back of a finite \'etale morphism. However, as the automorphism group $\mathrm{Aut}^G(G)$ of the trivial (left) $G$-torsor is isomorphic to $G$  over $S$, the morphism  $G\to S$ is finite \'etale, as required.  Note that  the above also shows that $\Ideg(BG\to S) = \deg(G\to S)$. Since $\deg(S\to BG) =  \deg(S\times_{BG} S \to S) = \Ideg(BG\to S) = \deg(G\to S)$ and $\deg(S\to BG)\deg(BG\to S) = \deg(\mathrm{id}:S\to S)=1$ we obtain the last statement about the degree.

To conclude the proof, assume that  $G\to S$ is finite \'etale. Then, the morphism $S\to [S/G]=BG$ is finite \'etale. Therefore, as the composition $S\to BG\to S$ is the identity, the morphism $BG\to S$ is \'etale. Moreover, the above argument shows that the inertia stack of $BG\to S$ is finite, so that  the morphism $BG\to S$ is a coarse space map. In particular, it is proper by \cite[Thm.~6.12]{Rydh}.  This shows that $BG\to S$ is proper and \'etale.
\end{proof}


To study the degree and inertia degree in families, we will use the following rigidification  result for stacks (cf. \cite[Tag 04V2]{stacks-project}). Recall that we follow the conventions of the stacks project concerning gerbes (\cite[Tag~06QB]{stacks-project}).   
		
\begin{lemma} \label{lem:rigidification1}  
   	Let $f:X \to Y$ be a proper \'etale morphism of   algebraic stacks. Assume that $Y$ is a scheme.
	Then the morphism $f$ factorises as 
	\[
	\xymatrix{X \ar[r] \ar[dr]_{f}  & R \ar[d] \\ 
	 & Y}
	\]
	where $X \to R$ is a proper \'etale gerbe  and $R \to Y$ is a
	finite \'etale morphism of schemes.
\end{lemma}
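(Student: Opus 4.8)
The plan is to produce the factorisation by rigidifying $X$ along its relative inertia over $Y$. By Lemma~\ref{lem:rigidification}, $I_{X/Y}\to X$ is finite \'etale, so in particular it is a flat, finitely presented, locally finitely presented group algebraic space over $X$; this is exactly the input needed for the rigidification construction of \cite[Tag~04V2]{stacks-project} (in the form worked out by Abramovich--Olsson--Vistoli and Romagny). Applying that construction to the normal subgroup $I_{X/Y}\subset I_X$ yields an algebraic stack $R$, a morphism $X\to R$ which is a gerbe banded (fppf-locally) by $I_{X/Y}$, and a factorisation $X\to R\to Y$ with the universal property that any morphism from $X$ killing $I_{X/Y}$ factors through $R$. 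The content of the lemma is then to check that $X\to R$ is proper \'etale and that $R\to Y$ is finite \'etale onto a scheme.

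First I would record that $X\to R$ is proper \'etale. Properness: $R$ is (the stackification of) a coarse-type quotient removing the relative inertia, and the map $X\to R$ is surjective, finitely presented, and its formation commutes with base change; one can check properness fppf-locally on $R$, where $X\to R$ becomes $BG\to S$ for $G=I_{X/Y}$ the (finite \'etale) band, and then Lemma~\ref{lem:degrees_of_BG_and_G} gives that $BG\to S$ is proper \'etale. \'Etaleness can be checked after base change along the smooth cover from a scheme, reducing again to the $BG\to S$ picture. Since $I_{X/R}=I_{X/Y}$ by construction, $X\to R$ is a gerbe; being also proper \'etale it is a proper \'etale gerbe, as required.

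Next I would analyse $R\to Y$. By construction $I_{R/Y}$ is trivial (the rigidification removes precisely $I_{X/Y}$, and since $I_X\to X\times_Y X$ factors appropriately, $R$ has trivial relative inertia over $Y$), so $R\to Y$ is representable by \cite[Tag~04YY]{stacks-project}, hence $R$ is an algebraic space. For the remaining properties I would use the multiplicativity of degree \eqref{eqn:degree_multiplicative} together with the fact that $X\to Y$ and $X\to R$ are proper \'etale, to see $R\to Y$ is proper quasi-finite, hence (as it is representable and proper with finite fibres) finite. \'Etaleness of $R\to Y$ follows from \'etaleness of $X\to Y$ and of $X\to R$ by descent along the surjective \'etale map $X\to R$ (or: $R\to Y$ is proper \'etale and strongly representable once we know $R$ is a scheme). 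Finally, $R$ is a scheme: it is an algebraic space that is finite \'etale over the scheme $Y$, and a quasi-finite separated algebraic space over a scheme is a scheme by \cite[Tag~03XX]{stacks-project}. This gives the desired finite \'etale morphism $R\to Y$ of schemes.

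I expect the main obstacle to be the bookkeeping in the rigidification step: verifying that $I_{X/Y}$ genuinely sits inside $I_X$ as a normal subgroup algebraic space to which \cite[Tag~04V2]{stacks-project} applies (this uses that $I_{X/Y}\to X$ is flat and finitely presented, which Lemma~\ref{lem:rigidification} supplies), and then identifying the rigidification $R$ with the claimed object, namely checking $I_{X/R}=I_{X/Y}$ and $I_{R/Y}$ trivial so that $X\to R$ is a gerbe and $R\to Y$ is representable. Once those structural facts are in place, the assertions that $X\to R$ is proper \'etale and that $R\to Y$ is finite \'etale onto a scheme are routine consequences of Lemmas~\ref{lem:rigidification} and~\ref{lem:degrees_of_BG_and_G}, fppf-local triviality of gerbes, and standard facts about quasi-finite separated algebraic spaces over schemes.
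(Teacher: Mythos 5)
Your overall strategy is the same as the paper's: rigidify along the finite \'etale relative inertia $I_{X/Y}$ (Lemma~\ref{lem:rigidification} plus \cite[Thm.~A.1]{AOV}/Romagny) to get $X\to R\to Y$ with $X\to R$ a proper \'etale gerbe and $R\to Y$ representable, then deduce \'etaleness of $R\to Y$ from ``\'etale is local on the source''. Up to that point you match the paper. However, there is a genuine gap at the key step. You write that multiplicativity of degree \eqref{eqn:degree_multiplicative} together with properness of $X\to Y$ and of $X\to R$ shows that $R\to Y$ is proper quasi-finite, hence finite. The degree formula is a purely numerical statement about generic degrees and gives no control over properness; and the implication ``$X\to Y$ proper, $X\to R$ surjective $\Rightarrow$ $R\to Y$ proper'' (the paper's \cite[Tag~0CQK]{stacks-project}) requires $R\to Y$ to be \emph{separated}, which the rigidification results do not hand you for free. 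The same missing separatedness is silently used again at the end, when you invoke the fact that a quasi-finite \emph{separated} algebraic space over a scheme is a scheme (and Knutson's criterion likewise needs it). So the one property your argument never establishes is exactly the one the paper's proof spends most of its effort on.

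In the paper, separatedness of $R\to Y$ is proved by the valuative criterion: given $f_1,f_2:\Spec\OO_K\to R$ agreeing generically and over $Y$, one uses that $X\to R$ is a proper \'etale gerbe to lift them, after passing to the integral closure $\OO_L$ in a finite extension $L/K$, to $g_1,g_2:\Spec\OO_L\to X$ with $g_{1,L}=g_{2,L}$; separatedness of $X\to Y$ forces $g_1=g_2$, hence $f_{1,\OO_L}=f_{2,\OO_L}$, and fppf descent (algebraic spaces are fppf sheaves, and $\Spec\OO_L\to\Spec\OO_K$ is fppf) gives $f_1=f_2$. Only then does one get that $R$ is a scheme (Knutson) and that $R\to Y$ is proper \'etale, hence finite \'etale. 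To repair your write-up you must supply an argument of this kind (or, alternatively, check properness of the diagonal $\Delta_{R/Y}$ after the smooth surjective base change $X\times_Y X\to R\times_Y R$, using $X\times_R X\cong(X\times_Y X)\times_{R\times_Y R}R$); as it stands, the finiteness and schematicity of $R\to Y$ are not justified.
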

\begin{proof} By Lemma \ref{lem:inertia}, the relative inertia stack $I_{X/Y}\to X$ is finite \'etale. 
By  standard rigidification results   \cite[Thm.~A.1]{AOV} (but see also for instance   \cite[Thm.~5.1.5]{ACV} or \cite[Thm.~5.1]{Romagny}), there exist an algebraic stack $R$ over $Y$ (denoted by $R=X\thickslash I_X$), a proper \'etale gerbe $X\to R$ (given by the ``rigidification'' $X\to X \thickslash I_X$), and a representable   morphism  $R\to Y$  such that the morphism in question factorises as $X \to R \to Y$. To conclude the proof, it suffices to show that $R\to Y$ is a finite \'etale morphism of schemes (note that a priori $R$ is just an algebraic space).


Firstly, note that  the morphism  $X\to R$  is surjective, as it is a gerbe.  Now, since  $X\to R$ is a  finitely presented \'etale surjective morphism and being \'etale is local on the source, it follows  that $R\to Y$ is \'etale.   
 
 Since  $X\to R\to Y$ is separated and $X\to R $ is surjective and universally closed,  a consideration of diagonals shows that $R\to Y$ is separated (cf. the proof of \cite[Tag~09MQ]{stacks-project}).

As $R \to Y$ is separated and \'etale, Knutson's criterion \cite[Cor.~II.6.17]{Knutson} implies that $R$ is a scheme. Now, as $R\to Y$ is a finitely presented separated morphism of schemes and $X\to Y$ is  a proper morphism with $X\to R$ surjective, it follows that  $R \to Y$ is proper \cite[Tag~0CQK]{stacks-project}. 
We see that $R \to Y$ is a proper \'etale morphism of schemes, hence finite \'etale. This concludes the proof.
\end{proof}

 We now show that the degree of a proper \'etale morphism is constant along the fibres over an integral base, as is familiar in the case of schemes.

\begin{lemma}	\label{lem:constant} 
	Let $X \to Y$ be a proper \'etale morphism of qcqs algebraic stacks 
	with $Y$ integral. Then for all points $y \in |Y|$ we have
	$$\deg(X_{y}/y) = \deg(X/Y).$$
\end{lemma}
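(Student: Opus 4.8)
The plan is to reduce the statement to the case where the base $Y$ is a scheme, and then use the rigidification of Lemma~\ref{lem:rigidification1} to split the morphism into a finite \'etale part (for which the classical theory applies) and a gerbe part (which contributes only to the inertia degree).

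First I would reduce to the case $Y$ is a scheme. Choose a smooth surjective finitely presented morphism $V \to Y$ from a scheme; shrinking $V$, we may assume $V$ is integral and dominates $Y$, so that a chosen generic point of $V$ maps to the generic point of $Y$. The definitions of $\deg$ and $\Ideg$ (via base change to such an atlas) are insensitive to this replacement, and the fibre $X_y$ base-changes compatibly once we lift $y\in Y(k)$ to a point of $V$ over a field extension $k'/k$; since degree and inertia degree are unchanged under the field extension $k'/k$ (the sheaf $f_*\OO$ computing the degree, and the relative inertia computing the inertia degree, both commute with the base change $\Spec k' \to \Spec k$), it suffices to prove the claim with $Y$ a scheme.

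Now, with $Y$ a scheme, apply Lemma~\ref{lem:rigidification1} to factor $X \to Y$ as $X \xrightarrow{g} R \xrightarrow{h} Y$ with $g$ a proper \'etale gerbe and $h$ finite \'etale between schemes. By the multiplicativity \eqref{eqn:degree_multiplicative} and the fact that $I_{X/Y} = I_{X/R}$ (the relative inertia of $X$ over the rigidification agrees with that over $Y$, since $R \to Y$ is representable, indeed \'etale, so contributes no inertia), we get $\deg(X/Y) = \deg(g)\deg(h)$ and $\Ideg(X/Y) = \Ideg(g) = \deg(I_{X/R} \to X)$; moreover, since $g$ is a gerbe, it is fppf-locally of the form $BG$ over $R$, so by Lemma~\ref{lem:degrees_of_BG_and_G} (applied after base change to a scheme cover of $R$, using that the degree is computed on an atlas) we have $\deg(g)\Ideg(g) = 1$, and $\Ideg(g) = \deg(I_{X/R}/X)$ where $I_{X/R}\to X$ is the automorphism group scheme of the gerbe. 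The same factorisation base-changes along $y : \Spec k \to Y$: writing $y$ for the composite and noting that $R_y \to \Spec k$ is finite \'etale of degree $\deg(h)$ (the classical constancy of the degree of a finite \'etale morphism over an integral base, applied to $h$ between schemes), and that $X_y \to R_y$ is again a proper \'etale gerbe with $I_{X_y/R_y} = (I_{X/R})_y$, we reduce the two equalities to: (a) $\deg(R_y/\Spec k) = \deg(R/Y)$, which is classical for finite \'etale morphisms of schemes over an integral base; (b) $\deg\big((I_{X/R})_y \to X_y\big) = \deg(I_{X/R}\to X)$, i.e. constancy of the degree of the finite \'etale morphism $I_{X/R}\to X$ along the fibres of the \emph{stack} $X$ over $R$, hence over $Y$.

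The main obstacle I expect is statement (b): proving that the degree of the finite \'etale morphism $I_{X/R} \to X$ is constant along fibres over the integral scheme $Y$, since $X$ itself is a stack rather than a scheme and the fibre $X_y$ may fail to be integral (it is a gerbe over $R_y$, which is a finite disjoint union of points of residue fields finite over $k$). To handle this I would pass to a scheme atlas $W \to X$ that is finitely presented \'etale surjective, together with $W \to Y$; after shrinking $W$ to an integral scheme dominating $Y$, the morphism $I_{X/R}\times_X W \to W$ is finite \'etale of degree $\Ideg(g)$ by definition, and the constancy then follows from the classical scheme statement applied to this morphism over the integral base, together with compatibility of the atlas with the base change to $y$ — the point being that $\deg(I_{X_y/R_y}/X_y)$ is by definition computed on such an atlas of $X_y$, which is the fibre of the atlas of $X$. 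Everything else is an assembly of \eqref{eqn:degree_multiplicative}, Lemma~\ref{lem:degrees_of_BG_and_G}, and the already-known scheme case.
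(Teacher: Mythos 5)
Your proposal is correct and follows essentially the same route as the paper's proof: reduce to an integral scheme base, rigidify $X\to R\to Y$ via Lemma \ref{lem:rigidification1}, combine \eqref{eqn:degree_multiplicative} with Lemma \ref{lem:degrees_of_BG_and_G} and the identification $I_{X/Y}\cong I_{X/R}$, and reduce both equalities to the classical constancy of the degree of a finite \'etale morphism over an integral base (the paper treats the inertia part via the identity $\Ideg(X/Y)=\deg(I_{X/Y}\to Y)/\deg(X\to Y)$ rather than via an atlas of $X$, but this is the same content). One minor caution: shrinking the atlas $V$ (resp.\ $W$) to an integral scheme may remove the point $y$ from its image, so it is cleaner to keep the cover surjective and use that every irreducible component of a flat cover dominates the integral base (or, as the paper does, simply invoke fppf-locality and integrality of $X$); this is a presentational wrinkle, not a gap.
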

\begin{proof} 
 	To prove the result we are allowed to take a dominant base-change. 
	Namely, let $Z$ be an integral stack with a dominant  map $Z \to Y$ whose
	image contains $y$. Then $X \times_Y Z \to Z$ is still proper \'etale,
	and it suffices to prove the equality of the degree for any point of $Z$
	above $y$. Therefore, we may assume that $Y$ 	is an integral scheme.
	Moreover, on passing to the normalisation, we may assume that $Y$ is normal. 
	In which case $X$ is also normal, and so its irreducible components  are
	its connected components. Therefore, the restriction of $X \to Y$ to an irreducible
	component of $X$ is again proper \'etale over $Y$. Hence summing over such components,
	we may assume that $X$ is integral.
	
	We first treat the case where $X \to Y$ is representable.
	Then $X \to Y$ is a proper \'etale algebraic space over a scheme, hence
	a scheme by Knutson's criterion \cite[Cor.~6.17]{Knutson}.
	So $X \to Y$ is a finite \'etale morphism of schemes;
	however this case is well-known and follows from the fact that
	the push-forward of $\OO_X$ to $Y$ is locally free of rank $\deg(X/Y)$.
	This proves the result for representable morphisms.
	
	We now treat the general case. We apply rigidification to $X \to Y$ (Lemma \ref{lem:rigidification1}) to see that $X\to Y$ factors as $X\to R\to Y$ with $X\to R$ a proper \'etale gerbe and $R\to Y$ a finite \'etale morphism of integral schemes.   As $R\to Y$ is representable (thus has inertia degree $1$), the stacks $I_{X/Y}$ and $I_{X/R}$ are isomorphic over $X$, so that we have   $\deg(I_{X/R}/X) = \deg(I_{X/Y}/X)$.
	Therefore,  by \eqref{eqn:degree_multiplicative} and Lemma \ref{lem:degrees_of_BG_and_G}  we have
	\begin{equation} \label{eqn:degs}
	\deg(X/Y) = \deg(X/R)\deg(R/Y) =  \frac{\deg(R/Y)}{\deg(I_{X/Y}/X)}.
	\end{equation}
  	The proper \'etale morphism $X_y\to y$ rigidifies as $X_y\to R_y\to y$. For $i \in I$ let $X_{y,i}$ and $R_{y,i}$ be the irreducible
  	components of $X_y$ and $R_y$, respectively. Then as in \eqref{eqn:degs}
  	we have
  	\begin{equation} \label{eqn:degsi}
  		\deg(X_y/y) = \sum_{i \in I} \deg(X_{y,i}/y)
  	= \sum_{i \in I} \frac{\deg(R_{y,i}/y)}{\deg(I_{X_{y,i}/y}/X_{y,i})}.
  	\end{equation}
   	Next by Lemma \ref{lem:inertia} the map $I_{X/Y} \to X$ is finite \'etale.
   	Thus applying the statement in this case to the inclusion of the generic
   	point of each $X_{y,i} \subset X$, we find that
   	$$\deg(I_{X_{y,i}/y}/X_{y,i}) = \deg(I_{X/Y}/X).$$
   	But $R \to Y$ is also finite \'etale, so 
   	we have $\deg(R/Y) = \deg(R_y/y) = \sum_{i \in I} \deg(R_{y,i}/y)$.
   	Combining these with \eqref{eqn:degs} and \eqref{eqn:degsi}
   	completes the proof.
\end{proof}

 \section{Hermite--Minkowski for algebraic stacks} \label{sec:HM}
In this section we prove our version of  Hermite--Minkowski's theorem for algebraic stacks (Theorem \ref{thm:HM_intro}). We begin with some results on gerbes.

\subsection{Classifying proper \'etale gerbes}\label{section:classifying_gerbes}
We first explain the classification of proper \'etale gerbes over a scheme $S$ using bands, following  Giraud \cite{Gir71}. 

A band over a scheme $S$ is an $S$-object of the stack of bands, as defined in 
\cite[D\'ef.~IV.1.1.6]{Gir71}. (\emph{Band} is the current accepted English translation of the french \emph{lien}).   If $G$ is a group scheme over $S$, then we let $\lien(G)$ be the associated band over $S$. One can associate to every gerbe over $S$ a band over $S$ \cite[\S IV.2.2]{Gir71}. 

In general the band of a gerbe over $S$ is not necessarily of the form $\lien(G)$ for some $G$ (see Remark \ref{remark:kresch}).
However, for a \emph{proper \'etale gerbe}, we have the following weaker statement.

 \begin{lemma}\label{lem:degree_of_gerbe} Let $S$ be an integral scheme
and $X\to S$  a proper \'etale gerbe. There is a     finite (abstract) group $G$ with $\# G = \mathrm{Ideg}(X/S) $  such that, locally for the \'etale topology on $S$, the band of $X\to S$ is   isomorphic to $\lien(G_S)$.
 \end{lemma}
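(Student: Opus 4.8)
The goal: given a proper étale gerbe $X \to S$ with $S$ integral, find a finite abstract group $G$ with $\#G = \Ideg(X/S)$ whose associated band $\lien(G_S)$ is étale-locally isomorphic to the band of $X \to S$.

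Key facts to use:
1. By Lemma \ref{lem:rigidification}, $I_{X/S} \to X$ is finite étale.
2. By Lemma \ref{lem:constant}, $\deg(I_{X/S}/X) = \Ideg(X/S)$ is a constant integer, and it equals the order of the fiber of $I_{X/S} \to X$ over any geometric point.
3. The band of a gerbe: étale-locally on $S$, a gerbe $X \to S$ has a section, hence étale-locally $X \cong BG$ for some group scheme/group-object $G$ over an étale cover, and the band is then $\lien(G)$ on that cover.

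How I would structure it.

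First, étale-localize to trivialize the gerbe. Since $X \to S$ is a gerbe, there is an étale cover $S' \to S$ and a section $s: S' \to X_{S'}$. The automorphism group sheaf of $s$ over $S'$ — call it $\mathcal{G}$ — is a sheaf of groups on $S'$ (for the étale topology, say), and over $S'$ the gerbe $X_{S'}$ is equivalent to the gerbe of $\mathcal{G}$-torsors, so $\mathrm{Band}(X_{S'}) \cong \lien(\mathcal{G})$. I need to understand $\mathcal{G}$: pulling back the Cartesian square used in the proof of Lemma \ref{lem:degrees_of_BG_and_G}, the automorphism sheaf $\mathcal{G}$ of the section $s$ fits in a Cartesian diagram with $I_{X/S} \to X$, namely $\mathcal{G} = S' \times_{s, X_{S'}, I_{X_{S'}/S'}} S'$-style pullback; concretely $\mathcal{G}$ represents $\underline{\mathrm{Aut}}(s)$ and is the pullback of the finite étale representable morphism $I_{X/S} \to X$ along $s: S' \to X_{S'} \to X$. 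Hence $\mathcal{G} \to S'$ is finite étale of degree $\Ideg(X/S) =: n$.

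Second, shrink further so that $\mathcal{G}$ is a constant group scheme. A finite étale group scheme of degree $n$ over $S'$ becomes, after a further finite étale (indeed connected étale) cover $S'' \to S'$, isomorphic to a constant group scheme $G_{S''}$ for some finite abstract group $G$ with $\#G = n$: one may pass to a cover over which $\mathcal{G}$ is split, i.e. $\mathcal{G}(S'') \to \mathcal{G}_{\bar{\eta}}$ is a bijection onto the geometric fiber; the group structure is then induced, giving the constant group $G := \mathcal{G}_{\bar\eta}(\bar\eta)$ with its group law. Over $S''$ we get $\mathrm{Band}(X_{S''}) \cong \lien(\mathcal{G}_{S''}) \cong \lien(G_{S''})$. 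Since $S'' \to S$ is an étale cover, this says exactly that $\mathrm{Band}(X \to S)$ and $\lien(G_S)$ are isomorphic étale-locally on $S$, and $\#G = \Ideg(X/S)$. This finishes the proof modulo the bookkeeping of bands.

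**The main obstacle.** The genuine work is the middle identification: verifying carefully that the automorphism sheaf of an étale-local section of the gerbe is precisely the pullback of $I_{X/S} \to X$ (so that its degree is $\Ideg(X/S)$), and that on a suitable further cover it becomes a constant group — i.e., that a finite étale group scheme splits after an étale base change. Neither is hard in isolation, but one must be careful that "band isomorphic étale-locally" is the right notion of equality in the stack of bands (Giraud's bands are by construction étale-local objects, so an étale-local isomorphism of the defining group sheaves yields an isomorphism of bands over the cover, which is all that is claimed). A secondary subtlety is that the section only exists étale-locally and the gerbe is a priori only locally of the form $B\mathcal{G}$; one should invoke the standard equivalence (Giraud, \cite[\S IV.2]{Gir71}, or \cite[Tag~06QB]{stacks-project}) between gerbes banded by $\lien(\mathcal{G})$ and twisted forms of $B\mathcal{G}$ rather than re-proving it.
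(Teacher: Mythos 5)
Your proposal is correct and takes essentially the same route as the paper: there the \'etale-local section comes from an \'etale presentation $U\to X$ (so $X\times_S U\to U$ is a neutral gerbe $B\mathcal{G}$), the group $\mathcal{G}$ is seen to be finite \'etale of degree $\Ideg(X/S)$ via Lemma \ref{lem:degrees_of_BG_and_G} (which encodes exactly your pullback-of-inertia computation), and one then passes to a further connected \'etale localization where $\mathcal{G}$ becomes constant, with the abstract group $G$ pinned down in advance as the inertia group of a geometric point so that a single $G$ works everywhere. The only small point to make explicit in your write-up is that the existence of an \'etale cover of $S$ carrying an object uses the \'etale-ness (not just the gerbe property) of $X\to S$, which is precisely how the paper's choice of $U$ handles it.
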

 \begin{proof}
Let $G$ be the finite (abstract) group given by the inertia group $I_{\bar{\eta}}$, where $\bar{\eta}$ denotes a geometric point over the generic point $\eta$ of $X$.
 Since $X\to S$ is proper \'etale, it follows  that there is a   scheme $U$, a finitely presented  \'etale surjective morphism $U\to S$, 
and a  flat finitely presented group scheme  $\mathcal{G}$ over $U$ such that $X\times_S U \cong B\mathcal{G}$. 
Note that, as $X\to S$ is proper \'etale, the morphism $B\mathcal{G}\to U$ is proper \'etale. Thus, by  Lemma \ref{lem:degrees_of_BG_and_G},  the group scheme $\mathcal{G}\to U$ is finite \'etale. We now trivialize the group scheme $\mathcal{G}$ over $U$. 
Let $V\to U$ be a finitely presented surjective \'etale   morphism  such that $\mathcal{G}_V$ is constant over each connected component of $V$. Since the fibre over every maximal point of $U$ is $G$, we have $\mathcal{G}_V \cong G_V$. Moreover $X_V \cong B\mathcal{G}_V$, thus $X_V \cong BG_V$. It follows that $\lien(X\to S)$ is isomorphic to $\lien(BG_V) = \lien(G_S)_V$ over $V$, as required.  
 \end{proof}

 \begin{remark}\label{remark:kresch} If $X\to S$ is a proper \'etale \emph{abelian} gerbe, then there is an abelian group $G$ such that the band of $X$ is isomorphic to $\lien(G)$ over $S$. However, this is not necessarily true if $X\to S$ is not abelian, as the following example shows. (This example was communicated to us by Andrew Kresch).
 
Let $ \ZZ/4\ZZ$ act as the automorphism group of $\ZZ/5\ZZ$ and let $ G = \ZZ/5\ZZ \rtimes \ZZ/4\ZZ$ be the corresponding
semidirect product. There is an exact sequence of groups
$$
1\to D_5\to G\to \ZZ/2\ZZ\to 1,
$$ where $D_5$ is the  (non-abelian) dihedral group of order 10.
Let $K$ be an imaginary quadratic extension of $\QQ$ and consider $ X:=[\Spec(K)/G]$,
where $G$ acts through $\ZZ/2\ZZ=\mathrm{Gal}(K/\QQ)$. Note that $ X$  is the gerbe of ``lifts of the $\ZZ/2\ZZ$-torsor $K/\QQ$ to a $G$-torsor''. This is a gerbe over $S:=\Spec \QQ$, and it is \'etale locally
isomorphic to   $B(D_5)$. The outer automorphism group of $D_5$ is
$\ZZ/2\ZZ$, so the band splits over  a quadratic extension of $\QQ$, namely $K$.  Suppose that there is a finite \'etale group scheme $H$ over $\QQ$ such that the band of $ X$ is isomorphic to $\lien(H)$. Then $H$ is a form of $D_5$ (corresponding to an element in $\mathrm{H}^1(\QQ, \Aut(D_5)) = \mathrm{H}^1(\QQ,G)$), and   there would have to be a continuous
homomorphism $\mathrm{Gal}(\QQ)\to\Aut(D_5)=G$ such that the composite $\mathrm{Gal}(\QQ)\to G \to \ZZ/2\ZZ$  
corresponds to the quadratic extension $K$. In particular, this would induce  a homomorphism  
$\mathrm{Gal}(\QQ)\to \ZZ/4\ZZ\to \ZZ/2\ZZ$, contradicting the fact that, for any cyclic degree $4$ extension of $\QQ$, the intermediate
field is always real quadratic (the discriminant of the quadratic extension being a sum of two squares).  Thus the  band of the proper \'etale gerbe $X$ is not isomorphic to the band induced by a group scheme over $\QQ$.
 \end{remark}

We next recall how to classify bands over $S$ which are locally isomorphic to $\lien(G)$.

\begin{lemma}\label{lem:class_of_band} Let $S$ be a scheme. Let $G$ be a flat finitely presented group scheme over $S$. Then the set of isomorphism classes of bands over $S$  which are fppf locally isomorphic to the band $\lien(G)$ (in the category of bands) is in bijection with $\HH^1(S, \Out(G))$.
\end{lemma}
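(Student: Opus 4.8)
The plan is to deduce this from two inputs from Giraud's theory \cite{Gir71}: the explicit computation of the automorphism sheaf of the band $\lien(G)$, and the general principle that the twisted forms of an object of a stack are classified by the first cohomology of its automorphism sheaf. First I would recall that the bands over $S$ organise into a stack $\mathbf{LIEN}$ for the fppf topology, obtained as the stack associated to the prestack whose sections over a scheme $T/S$ are the (flat, finitely presented) group schemes over $T$, with morphisms the sections of the sheaf of \emph{outer} isomorphisms. The central computation, due to Giraud \cite[Ch.~IV]{Gir71}, is that for a flat finitely presented group scheme $G$ over $S$ there is a canonical isomorphism of fppf sheaves of groups on $S$
$$\underline{\Aut}_{\mathbf{LIEN}(S)}(\lien(G)) \;\cong\; \underline{\Out}(G) := \underline{\Aut}(G)/\underline{\Inn}(G),$$
where $\underline{\Aut}(G)$ is the sheaf of automorphisms of the group scheme $G$ and $\underline{\Inn}(G)$ its normal subsheaf of inner automorphisms; this identity is precisely the reason passing to bands ``kills'' inner automorphisms.

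Next I would invoke the standard classification of forms in a stack: if $\mathcal C$ is a stack over a site and $\xi$ an object over $S$, the set of isomorphism classes of objects of $\mathcal C(S)$ that become isomorphic to $\xi$ after an fppf covering of $S$ is in natural bijection with the (non-abelian) cohomology set $\HH^1(S, \underline{\Aut}(\xi))$ of fppf $\underline{\Aut}(\xi)$-torsors, with the trivial torsor corresponding to $\xi$; see \cite[Ch.~III]{Gir71}. Concretely, a band $L$ with $L|_U \cong \lien(G_U)$ for some fppf covering $U \to S$ produces, by comparing the two pullbacks of this trivialisation over $U\times_S U$, a $1$-cocycle valued in $\underline{\Out}(G)$ whose class does not depend on the chosen trivialisation; conversely such a cocycle is exactly the descent datum needed to glue copies of $\lien(G)$ into a band, effectivity of descent being guaranteed by the stack property of $\mathbf{LIEN}$. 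Feeding the automorphism computation of the previous paragraph into this principle, applied to $\mathcal C = \mathbf{LIEN}$ and $\xi = \lien(G)$, yields the asserted bijection with $\HH^1(S, \underline{\Out}(G))$, where the right-hand side is the pointed set of fppf torsors under the sheaf of groups $\underline{\Out}(G)$, consistently with the conventions fixed above.

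The main obstacle is not the formal argument but the correct invocation of the two facts underpinning it: that $\mathbf{LIEN}$ is genuinely a stack for the fppf topology (so that fppf descent data for bands are effective) and that its automorphism sheaf of $\lien(G)$ is $\underline{\Out}(G)$ and not $\underline{\Aut}(G)$ — this is exactly the content of Giraud's theory of bands. One should also check that ``fppf locally isomorphic to $\lien(G)$'' in the statement is literally the notion of local triviality governing the cohomological classification, which it is; granting that, the lemma is the form-classification principle applied verbatim.
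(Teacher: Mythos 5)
Your proposal is correct and matches the paper's approach: the paper's entire proof is the citation \cite[Cor.~IV.1.1.7.3]{Gir71}, and your argument---that bands form an fppf stack, that the automorphism sheaf of $\lien(G)$ in that stack is the quotient sheaf $\Out(G)=\Aut(G)/\Inn(G)$, and that fppf-forms of an object of a stack are classified by $\HH^1$ of its automorphism sheaf---is exactly the content and standard proof of that corollary of Giraud. No gap.
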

\begin{proof}
See \cite[Cor.~IV.1.1.7.3]{Gir71}.
\end{proof}

If $L$ is a band over a scheme $S$, we let  $Z(L)$ be the center of $L$ \cite[\S IV.1.5.3.2]{Gir71}. Note  that $Z(L)$  is an \emph{abelian} band over $S$ (as defined in \cite[Prop.~IV.1.2.3]{Gir71}), and  therefore ``is'' a commutative group scheme over $S$ (see again \cite[\S IV.1.5.3.2]{Gir71}). If $G$ is a finite group, then the center of $\lien(G)$ is given by $\lien(Z(G))$, and the latter can be identified naturally with $Z(G)$.

If $L$ is a band over a scheme $S$, then we follow Giraud and let $\HH^2(S,L)$ be the set of $L$-equivalence classes of $S$-gerbes banded by $L$; see \cite[Def.~IV.3.1.1]{Gir71}. One relates this to the  second cohomology group of $Z(L)$ via the following.

\begin{lemma}\label{lem:class_of_banded_gerbe} Let $S$ be an integral  scheme.
Let $X\to S$ be a proper \'etale gerbe. Let $L$ be its band over $S$.  Then the center $Z:=Z(L)$ of the band $L$ is a finite \'etale group scheme over $S$ of degree dividing $\mathrm{Ideg}(X/S)$, and the set  of $L$-equivalence classes of proper \'etale gerbes  over $S$ banded by $L$ is in bijection with $  \HH^2(S,Z)$.  
\end{lemma}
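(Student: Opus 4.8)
The plan is to deduce the statement from Giraud's general classification of $S$-gerbes with a fixed band \cite[Ch.~IV]{Gir71}, after establishing two preliminary points: that $Z(L)$ is finite \'etale of the asserted degree, and that \emph{every} $S$-gerbe banded by $L$ is automatically proper \'etale, so that restricting attention to proper \'etale gerbes loses no information.

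I would first analyse the center. By Lemma \ref{lem:degree_of_gerbe} there is a finite group $G$ with $\#G = \Ideg(X/S)$ such that $L$ is isomorphic to $\lien(G_S)$ \'etale-locally on $S$. Since $Z(G)$ is a characteristic subgroup of $G$, it is preserved by every automorphism, so the \'etale-local isomorphisms $L \cong \lien(G_S)$ restrict to \'etale-local isomorphisms $Z(L) \cong \lien(Z(G)_S)$. Identifying $\lien(Z(G)_S)$ with the constant group scheme $Z(G)_S$ as in the discussion preceding the lemma, we see that $Z = Z(L)$ is a finite \'etale commutative group scheme over $S$ which, as $S$ is connected, has constant degree $\#Z(G)$; and $\#Z(G)$ divides $\#G = \Ideg(X/S)$ by Lagrange's theorem.

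Next I would show that an arbitrary $S$-gerbe $Y \to S$ banded by $L$ is proper \'etale. As gerbes are fppf-locally neutral and $L$ is fppf-locally isomorphic to $\lien(G_S)$, there is an fppf covering $U \to S$ over which $Y_U$ is neutral and $L_U \cong \lien(G_U)$; writing $\mathcal{G}$ for the automorphism group sheaf of a section of $Y_U$, we have $Y_U \simeq B\mathcal{G}$ with band $\lien(\mathcal{G}) \cong \lien(G_U)$. Hence $\mathcal{G}$ becomes isomorphic to the finite constant group scheme $G$ after a further fppf covering of $U$, and therefore $\mathcal{G} \to U$ is finite \'etale by fppf descent of finite \'etale morphisms; in particular $\mathcal{G}$ is a scheme. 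By Lemma \ref{lem:degrees_of_BG_and_G}, $B\mathcal{G} \to U$ is proper \'etale, so $Y_U \to U$ is proper \'etale, and since properness and \'etaleness descend along the fppf covering $U \to S$, the morphism $Y \to S$ is proper \'etale (and $Y$ is in particular an algebraic stack). Consequently the set of $L$-equivalence classes of proper \'etale $S$-gerbes banded by $L$ is exactly Giraud's set $\HH^2(S, L)$ of $L$-equivalence classes of all $S$-gerbes banded by $L$ \cite[Def.~IV.3.1.1]{Gir71}.

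Finally I would invoke Giraud's theorem that when $\HH^2(S, L)$ is non-empty it is a torsor under the abelian group $\HH^2(S, Z(L))$ via the twisting action \cite[\S IV.3.3]{Gir71}. The gerbe $X \to S$ exhibits $\HH^2(S, L)$ as non-empty, so taking the class of $X$ as base point yields a bijection $\HH^2(S, L) \xrightarrow{\ \sim\ } \HH^2(S, Z(L)) = \HH^2(S, Z)$; this $\HH^2$ is fppf cohomology in keeping with our conventions and coincides with \'etale cohomology since $Z$ is finite \'etale. This is the claimed bijection. The step I expect to require the most care is this last one, together with the identification preceding it: one must match our category of proper \'etale gerbes ``banded by $L$'' with Giraud's precise notion of $L$-equivalence classes (a gerbe together with a chosen identification of its band with $L$), and pin down the exact form and location in \cite{Gir71} of the $\HH^2(S,Z(L))$-torsor structure on $\HH^2(S,L)$; the descent arguments in the first two steps are routine by comparison.
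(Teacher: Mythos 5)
Your proposal is correct and follows essentially the same route as the paper: Lemma \ref{lem:degree_of_gerbe} to identify $Z(L)$ locally with $Z(G)$ and get the degree bound, a descent argument plus Lemma \ref{lem:degrees_of_BG_and_G} to show every gerbe banded by $L$ is proper \'etale, and then Giraud's free transitive action of $\HH^2(S,Z)$ on the non-empty set $\HH^2(S,L)$ (the paper cites Thm.~IV.3.3.3.(i) of \cite{Gir71}). Your treatment of the local automorphism sheaf $\mathcal{G}$ as merely a form of $G$, rather than $G$ itself, is if anything slightly more careful than the paper's parenthetical.
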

\begin{proof}  By Lemma \ref{lem:degree_of_gerbe}, there is a finite group $G$ of order $\mathrm{Ideg}(X/S)$ such that $L$ is isomorphic to $\lien(G)$, locally for the fppf topology on $S$. In particular $Z(L)$ is locally isomorphic to $\lien(Z(G))$, so that $Z(L)$ is a finite \'etale group scheme over $S$ of degree $\#Z(G)$. Since $\#Z(G)$ divides $\# G = \mathrm{Ideg}(X/S)$ this shows   the first statement.

 To prove the second statement,  note that every gerbe over $S$ banded  by $L$ is proper \'etale over $S$. (Indeed, by descent it suffices to show this holds for a neutral  gerbe $Y \to S$ banded by $L$. However, the band of $Y \to S$ is isomorphic to $\lien(G)$ over $S$, so that $Y\cong BG$. Therefore, $Y\to S$ is proper \'etale by Lemma \ref{lem:degrees_of_BG_and_G}.) Therefore $\HH^2(S,L)$ is the set of $L$-equivalence classes of proper \'etale gerbes over $S$. But there is an action of $\HH^2(S,Z)$  on $\HH^2(S,L)$ which is free and transitive    \cite[Thm.~IV.3.3.3.(i)]{Gir71},
 and as  $\HH^2(S,L)$ is non-empty (it contains the class of $X \to S$), the result follows.
\end{proof}

\subsection{Hermite--Minkowski for gerbes} We now prove a version of Theorem \ref{thm:HM_intro} for gerbes.  We require the following finiteness statements for cohomology sets.

\begin{lemma} \label{lem:H^1_finite} 
	Let $B$ be a connected
	arithmetic scheme and $G$ a finite \'etale  group scheme over $B$. The following statements hold.
	\begin{enumerate}
	\item The pointed set $\HH^1(B, G)$ is finite.
	\end{enumerate}
	Assume further that $G$ is abelian.
	\begin{enumerate}
	\item[(2)] If $\dim B =1$, then the set $\HH^r(B,G) $ is finite for all $r\geq 0$.
	\item[(3)]  If  the degree of $G$ over $B$ is invertible on  $B$,
	then the set $\HH^r(B, G)$ is   finite for all $r \geq 0$.
	\end{enumerate}
\end{lemma}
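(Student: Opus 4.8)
The plan is to reduce all three statements to known finiteness results in étale cohomology over arithmetic schemes. I would begin with statement (1). Since $G$ is finite étale over $B$, the pointed set $\HH^1(B,G)$ classifies $G$-torsors, equivalently sections of a certain finite étale stack; more concretely, a $G$-torsor over $B$ corresponds (after passing to a connected finite étale cover $B'\to B$ trivialising $G$) to a continuous homomorphism from an étale fundamental group to a finite group, together with compatibility data. The cleanest route is to use Hermite--Minkowski directly: a $G$-torsor $T\to B$ is a finite étale cover of $B$ of bounded degree (degree $\#G(\overline{k})$ over each component), so there are only finitely many possibilities for $T$ as a scheme by Hermite--Minkowski for $\ZZ$-finitely generated rings (cited in the introduction, from \cite{smallness}); since each such $T$ carries only finitely many $G$-actions making it a torsor, $\HH^1(B,G)$ is finite. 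One should be slightly careful that $B$ may have dimension bigger than $1$, but the Hermite--Minkowski statement quoted in the introduction is exactly for $\ZZ$-finitely generated subrings, so this is fine after reducing to $B$ affine and integral (and one reduces to connected $B$ by additivity).

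For statement (2), with $\dim B = 1$ and $G$ abelian finite étale, I would invoke the finiteness of étale cohomology of one-dimensional arithmetic schemes with finite coefficients. For $r=0$ this is clear ($\HH^0(B,G)\subset G(\overline{k})$ is finite), and $r=1$ follows from (1). For $r\geq 2$: by a standard dévissage (using that $G$ is filtered by $\ZZ/\ell\ZZ$-type pieces after pulling back to a trivialising cover, together with the Hochschild--Serre spectral sequence for that finite cover $B'\to B$, whose $\HH^p(\Gal(B'/B),-)$ terms have finite coefficients) it suffices to treat $G=\mu_\ell$ or $\ZZ/\ell\ZZ$ over a normalisation, and then the finiteness of $\HH^r_{\et}(B',\ZZ/\ell\ZZ)$ for $B'$ a one-dimensional arithmetic scheme follows from the finiteness theorems for étale cohomology of arithmetic curves / rings of $S$-integers (cohomological dimension reasons also kill high $r$). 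Here I would cite the standard references on Galois cohomology of number fields / étale cohomology of arithmetic schemes (e.g. via class field theory as the paper does for the Brauer group, or Milne's \emph{Arithmetic Duality Theorems}).

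For statement (3), where the degree of $G$ is invertible on $B$ but $\dim B$ is arbitrary, the key input is Gabber's (or Deligne's, in the appropriate generality) finiteness theorem for étale cohomology: for a finite-type scheme over $\ZZ$ or over a field, étale cohomology with torsion coefficients prime to the residue characteristics is finite; equivalently one may use the finiteness of $\HH^r_{\et}(X,\FF)$ for constructible $\FF$ over schemes of finite type over a number ring, after inverting the relevant primes. Concretely: reduce to $B$ integral affine of finite type over $\ZZ[1/\ell]$ where $\ell$ is a prime dividing $\#G(\overline k)$; pull back along a connected finite étale $B'\to B$ trivialising $G$ and use Hochschild--Serre to reduce to constant $G$; then $\HH^r(B',\ZZ/\ell^a\ZZ)$ is finite by the cited finiteness theorem, and descending the spectral sequence (finitely many finite terms) gives finiteness over $B$.

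\emph{The main obstacle} I anticipate is pinning down the precise citable form of the étale cohomology finiteness theorem in part (3) at the level of generality needed — namely for arbitrary-dimensional finite-type $\ZZ[1/\ell]$-schemes that are not necessarily regular or proper — and making the dévissage from a general abelian finite étale $G$ to constant coefficients via Hochschild--Serre genuinely clean (one must check the Galois group of the trivialising cover is finite, which it is, and that its group cohomology with the relevant finite modules is finite, which it is since everything is finite). Parts (1) and (2) are essentially bookkeeping on top of Hermite--Minkowski and classical Galois-cohomological finiteness; the only care needed there is the reduction to connected integral affine $B$ and keeping track that ``arithmetic scheme'' means finite type over $\ZZ$, so all the classical number-field statements apply after spreading out.
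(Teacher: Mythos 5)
Your proposal is correct and takes essentially the same route as the paper: part (1) is proved exactly as you describe (a $G$-torsor is a finite \'etale cover of bounded degree, Hermite--Minkowski for arithmetic schemes bounds the underlying scheme, and each scheme admits only finitely many $G$-actions), while for (2) and (3) the paper simply cites the same finiteness theorems you invoke, namely Milne's \emph{Arithmetic Duality Theorems} (Thm.~II.3.1 for the one-dimensional case and Prop.~II.7.1 for coefficients invertible on the base). The Hochschild--Serre/normalisation d\'evissage to constant coefficients that you sketch is therefore not needed, since those cited results apply directly to finite \'etale abelian group schemes, but it does not constitute a genuinely different method.
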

\begin{proof}  
	As $G$ is finite \'etale, by Hermite--Minkowski for arithmetic schemes \cite{smallness} there are only finitely many possibilities
	for the scheme underlying the torsor representing each element of $\HH^1(B,G)$.
	As there are also only finitely many possibilities for the $G$-action on such a scheme,
	this proves the finiteness of $\HH^1(B,G)$. 
	The second statement follows from  	\cite[Thm.~II.3.1]{Mil06}. 
	The third statement follows from \cite[Prop.~II.7.1]{Mil06}. 
\end{proof} 

\begin{remark}  We do not know whether the cohomology groups $\HH^r(B,G)$ are always finite if $\dim B > 1$ and the degree of the finite \'etale group scheme $G$  is not invertible on $B$.
%
%
\end{remark}

Our main finiteness result is for gerbes of fixed inertia degree over arithmetic schemes, and reads as follows.
\begin{theorem}\label{thm:HM_for_gerbes}
	Let $B$ be an integral arithmetic scheme and $e$ a positive integer. If $\dim B > 1$, then assume that  $e$ is invertible on $B$. Then the set of $B$-isomorphism classes of proper \'etale gerbes $X\to B$ over $B$ of inertia degree $e$ is finite.
\end{theorem}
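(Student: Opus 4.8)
The plan is to reduce the classification of proper \'etale gerbes of fixed inertia degree $e$ over $B$ to the finiteness statements already established. First I would observe that, since $B$ is integral, any proper \'etale gerbe $X \to B$ has a well-defined band $L$ over $B$, and by Lemma \ref{lem:degree_of_gerbe} there is a finite abstract group $G$ with $\#G = e$ such that $L$ is \'etale-locally isomorphic to $\lien(G_B)$. There are only finitely many isomorphism classes of finite groups of order $e$, so it suffices to fix one such $G$ and prove that the set of $B$-isomorphism classes of proper \'etale gerbes whose band is \'etale-locally $\lien(G_B)$ is finite. I would organise this along the standard two-step fibration: (i) the band of the gerbe can take only finitely many isomorphism classes; (ii) for each fixed band $L$, the set of gerbes banded by $L$ is finite.

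For step (i): by Lemma \ref{lem:class_of_band}, the isomorphism classes of bands over $B$ that are fppf-locally isomorphic to $\lien(G_B)$ are in bijection with $\HH^1(B, \Out(G_B))$. Here $\Out(G_B) = \Out(G)_B$ is the constant group scheme attached to the finite abstract group $\Out(G)$, in particular finite \'etale over $B$. By Lemma \ref{lem:H^1_finite}(1), the pointed set $\HH^1(B, \Out(G)_B)$ is finite, so there are only finitely many bands to consider. For step (ii): fix one such band $L$ and suppose the set of $L$-equivalence classes of proper \'etale gerbes banded by $L$ is non-empty (otherwise there is nothing to prove). By Lemma \ref{lem:class_of_banded_gerbe}, this set is a torsor under $\HH^2(B, Z)$, where $Z = Z(L)$ is a finite \'etale commutative group scheme over $B$ of degree dividing $e$. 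To conclude I need $\HH^2(B, Z)$ finite: if $\dim B = 1$ this is Lemma \ref{lem:H^1_finite}(2), while if $\dim B > 1$ the hypothesis that $e$ is invertible on $B$ forces $\deg(Z/B)$ (a divisor of $e$) to be invertible on $B$, so Lemma \ref{lem:H^1_finite}(3) applies.

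Finally, I would assemble the pieces: the $B$-isomorphism classes of proper \'etale gerbes of inertia degree $e$ are partitioned according to (the isomorphism class of) their band, each band lies in a finite set by step (i), and the gerbes with a given band form a finite set by step (ii); hence the total set is finite. One small technical point to be careful about is the distinction between $L$-equivalence of gerbes and $B$-isomorphism of gerbes: an isomorphism of gerbes over $B$ need not respect the banding, but it does induce an isomorphism on bands, so the map from $B$-isomorphism classes to the set of band-isomorphism classes is well-defined with fibres contained in the (finite) union, over the finitely many bandings of a given band, of the $L$-equivalence classes; since all these sets are finite, this causes no difficulty. The only genuine input beyond bookkeeping is the finiteness of the relevant $\HH^1$ and $\HH^2$ sets, and that is exactly what Lemma \ref{lem:H^1_finite} supplies — so the main ``obstacle'' is really just arranging the dictionary of Giraud's non-abelian $\HH^2$ correctly, which the earlier lemmas have already done.
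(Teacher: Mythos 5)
Your proposal is correct and follows essentially the same route as the paper: reduce via Lemma \ref{lem:degree_of_gerbe} to finitely many groups $G$ of order $e$, classify the possible bands by $\HH^1(B,\Out(G))$ (Lemma \ref{lem:class_of_band} plus Lemma \ref{lem:H^1_finite}), and classify gerbes with a fixed band by $\HH^2(B,Z(L))$ (Lemma \ref{lem:class_of_banded_gerbe} plus Lemma \ref{lem:H^1_finite}), finally passing from pairs (band, banded gerbe) to $B$-isomorphism classes via the evident surjection. Your explicit remarks on the invertibility of $\deg(Z(L)/B)$ when $\dim B>1$ and on the distinction between $L$-equivalence and $B$-isomorphism match the paper's argument.
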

 
\begin{proof}   We prove the result using the classification results from the previous section.
	
	Let $X\to B$ be a proper \'etale gerbe   of inertia degree $e$, and note that $X$ is integral.   By Lemma \ref{lem:degree_of_gerbe}, there is a finite (abstract) group $G$ of order $e$ such that the band of $X\to B$ is locally isomorphic to $\lien(G)$ in the category of bands. By Lemma \ref{lem:class_of_band}, the band of $X\to B$ is canonically an object of $\mathrm{H}^1(B, \mathrm{Out}(G))$. Note that, as $\mathrm{Out}(G)$ is a finite \'etale group scheme over $B$, the latter set is finite (Lemma \ref{lem:H^1_finite}).   Thus, as there are only finitely many groups of order $e$, there are finitely many bands $L_1, \ldots, L_r$ over $B$ such that any proper \'etale gerbe of inertia degree $e$ is banded by some $L_i$. 
	
	Fix $i\in \{1,\ldots, r\}$ and define $L:= L_i$. Note that the center  $Z(L)$ of $L$ is a finite \'etale group scheme over $B$ and that the set of $L$-equivalence classes of proper \'etale gerbes over $B$ banded by $L$ is in bijection with $\HH^2(B,Z(L))$; see Lemma \ref{lem:class_of_banded_gerbe}. Since $Z(L)$ is abelian of order dividing $e$, the  group $\mathrm{H}^2(B,Z(L))$ is  finite under our assumptions (Lemma \ref{lem:H^1_finite}).
	
	From the above we conclude that the set of pairs $(L,X)$ with $L$ a   band over $B$ and   $X$ a (proper \'etale) gerbe banded by $L$  of inertia degree $e$ over $B$   is finite, up to equivalence as a pair.   However, the latter (finite) set surjects onto the set of $B$-isomorphism classes of proper \'etale gerbes over $B$ of inertia degree $e$, which is therefore finite, as required.
\end{proof}

\begin{remark}
Let $G$ be a finite abelian group, let $S =\Spec \QQ$, and let $X\to S$ be a non-neutral (proper \'etale) gerbe banded by $G$, corresponding to some   element $[X]$ in $\HH^2(S,G)$ with $2[X]\neq 0$.  Let $X^{-}\to S$ be the gerbe banded by $G$ corresponding to the element $-[X]$ in $\HH^2(S,G)$; see \cite[Prop~IV.3.3.2.(iii)]{Gir71}. Note that, as $2[X]\neq 0$, the stacks $X$ and $X^{-}$ are not isomorphic as gerbes banded by $G$. However, by the definition of $X^{-}$ (and \cite[Prop.~IV.2.1.7.2]{Gir71}), it follows that $X$ and $X^{-}$ are isomorphic as algebraic stacks (hence  gerbes) over $S$.
\end{remark}

\subsection{Hermite--Minkowski for proper \'etale morphisms}
The following result generalizes Theorem \ref{thm:HM_for_gerbes} from proper \'etale gerbes to proper \'etale morphisms. 

\begin{theorem}\label{Cor:HM}  
Let $B$ be an integral arithmetic scheme and $n,m \in \NN$. If $\dim B > 1$, assume that $m!$ is invertible on $B$. Then, the set of $B$-isomorphism classes of proper \'etale morphisms $X\to B$   of degree at most $n$ such that the inertia degree of every irreducible component of $X$ is at most $m$, is finite. 
\end{theorem}
\begin{proof}  
 By rigidification (Lemma \ref{lem:rigidification1}), for any proper \'etale morphism $X\to B$ as in the statement, there is a scheme $R$ such that the morphism $X\to B$ factors  as $X \to R \to B$, where $X\to R$ is a proper \'etale gerbe and $R\to B$ is finite \'etale. 
As $R\to B$ is finite \'etale, it follows that $R$ is an arithmetic scheme. 

Let $X_i$ for $i \in I$ be the irreducible components of $X$, and $R_i$ the corresponding components of $R$.
Now, by \eqref{eqn:degree_multiplicative} and the relation between the degree and inertia degree of a proper \'etale gerbe (Lemma \ref{lem:degrees_of_BG_and_G}), we have
$$
	n \geq \deg(X/B) = \sum_{i \in I} \deg(X_i/B) =
	\sum_{i \in I} \deg(X_i/R_i)\deg(R_i/B) = 
	\sum_{i \in I} \frac{\deg(R_i/B)}{\Ideg(X_i/R_i)}.
$$
However, as $R_i\to B$ is representable, we have $ \Ideg(X_i/R_i) = \Ideg(X_i/B)$. As $ \Ideg(X_i/R_i) = \Ideg(X_i/B)  \leq m $ we find that
$$
	nm \geq \sum_{i \in I} \deg(R_i/B) = \deg(R/B).
$$
Since $\deg(R/B)$ is bounded, the set of $B$-isomorphism classes of the schemes $R$ occurring above is finite by the classical Hermite--Minkowski theorem (see \cite[p.~209]{FaltingsComplements} or \cite{smallness}). 

Next $\Ideg(X_i/R_i) \leq m$, hence $\Ideg(X_i/R_i)$ is bounded and invertible on $R_i$, so the set of $R_i$-isomorphism classes  of proper \'etale gerbes  $X_i\to R_i$ occurring above is finite by our version of Hermite--Minkowski for proper \'etale gerbes (Theorem \ref{thm:HM_for_gerbes}). This implies that the set of $B$-isomorphism classes of proper \'etale gerbes $X\to R$ occurring above is also finite. This completes the proof.
\end{proof}

We now prove the  version formulated in the introduction.

\begin{proof}[Proof of Theorem \ref{thm:HM_intro}] 
As $X$ is integral and the statement is up to $K$-isomorphism (as opposed to $A$-isomorphism),  replacing $\Spec A$ by an affine open if necessary  we may assume that  $n!$ is invertible in $A$. The result now follows from Theorem \ref{Cor:HM}. 
\end{proof}

\section{Arithmetic hyperbolicity}\label{section:arithmetic_hyperbolicity}
In this section we extend the notion of arithmetic hyperbolicity    (employed for instance in \cite[\S 2]{UllmoShimura}, and \cite{Autissier1, Autissier2}) to algebraic stacks, and establish its formal properties.  
 Throughout this section $k$ is an algebraically closed field of characteristic zero.

\subsection{Definition of arithmetic hyperbolicity}

 If $X$ is a variety over a number field $K\subset\Qbar$, the finiteness of the set of $\OO_{L}[S^{-1}]$-integral points (on an appropriate choice of model) for all number fields $K \subset L$  and all finite sets of finite places $S$ of $L$ depends only on the isomorphism class of $X_{\Qbar}$.

Our aim is to extend this natural property of a variety over $\Qbar$ to algebraic stacks defined over our algebraically closed field $k$ (which might not be $\Qbar$).

Let $A\subset k$ be a $\ZZ$-finitely generated subring and let $\mathcal{X}$ be a finitely presented algebraic stack over $A$.
As the $A$-valued points $\mathcal{X}(A)$ on  $\mathcal{X}$ form a groupoid (which is not naturally a set in general), to study the finiteness of integral points, we consider    the set $\pi_0(\mathcal{X}(A))$ of isomorphism classes of objects in $\mathcal{X}(A)$. Moreover, the most flexible notion of arithmetic hyperbolicity is obtained by considering the image of $\pi_0(\mathcal{X}(A))$ in the set of ``geometric'' points $\pi_0(\mathcal{X}(k))$. This ``flexibility'' will help in establishing some of the basic geometric properties we require. For numerous applications it is also useful to consider the image of the integral points inside the rational points (cf.~the map \eqref{Faltings}).

\begin{definition}\label{defn:arithmetic_hyperbolicity}[Arithmetic hyperbolicity]
	A finitely presented algebraic stack  $X$ over an algebraically closed field $k$ of characteristic $0$ is called \emph{arithmetically hyperbolic (over $k$)} if there exist a $\ZZ$-finitely generated subring    $A\subset k$ and a model $\mathcal X$ of $X$ over $A$  such that, for all $\ZZ$-finitely generated subrings  $A'\subset k$ containing $A$, the set 
	$$\Im[ \pi_0(\mathcal X(A'))~\to~\pi_0(\mathcal X(k))]$$  is finite. 
 \end{definition}

 \begin{remark} 
 If $X$ is a variety over $k$, then $X$ is arithmetically hyperbolic over $k$ if and only if there is a $\ZZ$-finitely generated subring $A\subset k$ and a  model $\mathcal{X}$ of $X$ over $A$ such that $\mathcal{X}$ is a separated scheme and, for all $\ZZ$-finitely generated subrings $A'\subset k$ containing $A$, the set $\mathcal{X}(A')$ is finite. (This follows from Lemma \ref{lem:arithm_hyp} and the fact that $\pi_0(\mathcal{X}(A)) = \mathcal{X}(A)$ injects into $\pi_0(X(k)) = X(k)$.)
 
 Consider the important case $k = \bar{\QQ}$. Then a $\ZZ$-finitely generated subring $A \subset \bar\QQ$ is an order in some number field. However in the study of arithmetic hyperbolicity, we are free to replace $\Spec A$ by a dense open subset, as it only makes the problem more difficult, so we may assume that $\Spec A$ is actually regular. We deduce that $X$ is arithmetically hyperbolic if and only if there is a number field $K$ and some model $\mathcal{X}$ for $X$ over $\OO_K$, such that $\mathcal{X}(\OO_L[S^{-1}])$ is finite for all number fields $K \subset L$ and all finite sets of places $S$ of $L$. A more general version of this statement is provided by Lemma \ref{lem:smooth}.
 \end{remark}

 \begin{remark}  
 The fact that we work with stacks leads to some pathologies that are worth keeping in mind. For instance, a rational point on an algebraic stack can come from infinitely many pairwise non-isomorphic integral points. Indeed, there is a $\ZZ$-finitely generated subring $A\subset \CC$ such that the map $\mathrm{Pic}(A) = \pi_0(B\mathbb{G}_m(A)) \to \pi_0(B\mathbb{G}_{m}(\mathrm{Frac}{A})) = \mathrm{Pic}(\mathrm{Frac}(A)) = \{1\}$  has infinite fibres. The problem here is that the stack $B\Gm$ is non-separated.
 
Another phenomenon is that infinitely many rational points can give rise to the same geometric point. Namely, consider $B\mu_2$ over $\QQ$. We have $\pi_0(B\mu_2(\QQ)) = \QQ^*/\QQ^{*2}$, yet $\pi_0(B\mu_2(\bar{\QQ}))$ is a singleton. It is for these reasons that we consider the image of the integral points inside the geometric points in Definition \ref{defn:arithmetic_hyperbolicity}.  

However, for finitely presented separated Deligne--Mumford stacks, being arithmetically hyperbolic is equivalent to the more natural and a priori stronger condition of having only finitely many (isomorphism classes of) integral points; see  Theorem~\ref{thm:equivalences} for a precise statement.
 \end{remark}

 \begin{remark} Note that $\mathbb{Z}$ acts on $\mathbb{A}^1_k$ via translation. Since the action is free, the algebraic stack $X=[\mathbb{A}^1/\mathbb{Z}]$ is an algebraic space. Note that $X$ is of finite type over $\mathbb{C}$. However, by \cite[Tag~06Q2]{stacks-project}, the stack $X$ is not finitely presented over $\CC$, as its diagonal is not quasi-compact. However, note that a finite type algebraic stack over $\CC$ with affine diagonal (or quasi-compact diagonal) is in fact finitely presented (by definition) over $\CC$. Thus, the distinction between finite type and finitely presented only appears when the stack in question is ``highly'' non-separated. \end{remark}
 
\begin{example}[Faltings]\label{example:elliptic_curve} In \cite{Faltings2} Faltings proved Mordell's conjecture which says that a smooth proper geometrically connected curve $C$ of genus at least two over  a number field $K$ has only finitely many $K$-rational points. In \cite{FaltingsComplements} he also proved the more general statement that a smooth proper connected curve of genus $g$ is arithmetically hyperbolic over $k$ if and only if $g \geq 2$. 
\end{example}

 \begin{example}[Faltings] \label{ex:Shaf_Conj} Let $g\geq 2$ be an integer.
Let $\mathcal{M}_g$ be the finite type separated Deligne--Mumford   stack of smooth proper curves of genus $g$ over $\ZZ$. Then $\mathcal{M}_{g,k}$ is arithmetically hyperbolic over $k$.  This is a reformulation of Faltings's celebrated finiteness theorem (\emph{formerly} Shafarevich's conjecture) for genus $g$ curves over a number field $K$ with good reduction outside a given finite set of finite places of $K$.
\end{example}

\begin{remark}[Faltings]
Let $A$ be an abelian variety over $\mathbb{C}$, and let $X\subset A$ be a closed subvariety. Then $X$ is arithmetically hyperbolic over $\mathbb{C}$ if and only if $X$   does not contain a translate of a positive-dimensional abelian subvariety of $A$. This is a consequence of Faltings's theorem  \cite{FaltingsLang}. Generalizations to subvarieties of semi-abelian varieties were obtained by Vojta \cite{Vojta1, Vojta2}.
Non-trivial affine examples of arithmetically hyperbolic varieties are given in \cite{Autissier1, Autissier2, CLZ, CorZanAnnals, FaltingsLang, Levin, VojtaSub}. 
\end{remark}

 \subsection{Basic properties of arithmetically hyperbolic stacks} 
 
\begin{lemma}[Independence of model]\label{lem:arithm_hyp}
	Let $X$ be a finitely presented arithmetically hyperbolic algebraic stack over $k$. Then, for all $\ZZ$-finitely generated subrings $B\subset k$ and all models $\mathcal Y$ for $X$ over $B$, the set $\Im[\pi_0(\mathcal Y(B))\to \pi_0(\mathcal Y(k))]$ is finite.
\end{lemma}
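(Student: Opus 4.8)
The plan is to compare an arbitrary model $\mathcal{Y}$ over $B$ with a model $\mathcal{X}$ over some $A$ witnessing arithmetic hyperbolicity, by passing to a common $\ZZ$-finitely generated subring $A'\subset k$ and a common open subscheme of the base over which the two models become isomorphic. First I would invoke the definition: there exist a $\ZZ$-finitely generated subring $A\subset k$ and a model $\mathcal{X}$ of $X$ over $A$ such that for all $\ZZ$-finitely generated subrings $A'\subset k$ containing $A$, the image $\Im[\pi_0(\mathcal{X}(A'))\to\pi_0(\mathcal{X}(k))]$ is finite. Given $B$ and $\mathcal{Y}$, set $A_0 := $ the subring of $k$ generated by $A$ and $B$; this is again $\ZZ$-finitely generated.

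The key step is a spreading-out argument. The isomorphism $\mathcal{X}\times_A k \xrightarrow{\sim} X \xrightarrow{\sim} \mathcal{Y}\times_B k$ of finitely presented algebraic stacks over $k$ is defined over some finitely generated $A_0$-subalgebra of $k$; enlarging $A_0$ we may assume there is a $\ZZ$-finitely generated subring $A_1\subset k$ containing both $A$ and $B$ such that $\mathcal{X}_{A_1}$ and $\mathcal{Y}_{A_1}$ are isomorphic as algebraic stacks over $A_1$ (here I would cite the standard limit/spreading-out results for finitely presented algebraic stacks, e.g.\ \cite[Tag~07SK]{stacks-project} and the fact that $k=\varinjlim A'$ over $\ZZ$-finitely generated $A'\subset k$). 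The natural maps $\pi_0(\mathcal{Y}(B))\to\pi_0(\mathcal{Y}(A_1))=\pi_0(\mathcal{Y}_{A_1}(A_1))$ and the identification $\pi_0(\mathcal{Y}_{A_1}(A_1))\cong\pi_0(\mathcal{X}_{A_1}(A_1))=\pi_0(\mathcal{X}(A_1))$ then give a commutative diagram
\[
\xymatrix{
\pi_0(\mathcal{Y}(B)) \ar[r] \ar[d] & \pi_0(\mathcal{X}(A_1)) \ar[d] \\
\pi_0(\mathcal{Y}(k)) \ar[r]^{\sim} & \pi_0(\mathcal{X}(k)),
}
\]
where the bottom arrow is the bijection induced by the fixed isomorphism $\mathcal{Y}\times_B k\cong\mathcal{X}\times_A k$ over $k$. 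Since $A_1\supseteq A$, the image of the right vertical map is finite by hypothesis, hence so is the image of the left vertical map, which is exactly $\Im[\pi_0(\mathcal{Y}(B))\to\pi_0(\mathcal{Y}(k))]$.

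The main obstacle is the spreading-out step: one must know that an isomorphism between the $k$-fibres of two finitely presented algebraic stacks over a filtered colimit of rings descends to an isomorphism over some finite stage. For schemes this is classical \cite[Tag~01ZM]{stacks-project}; for algebraic stacks the relevant statements are in \cite[Tags~07SK, 0CMY]{stacks-project} (limits of algebraic stacks, and the fact that a morphism over a limit comes from a finite stage, with isomorphy also descending), and this is where the finite presentation hypothesis on $X$ (and on the models) is essential. Once that is in hand the rest is the formal diagram chase above.
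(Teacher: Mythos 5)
Your argument is correct and is essentially the paper's own proof: both take the witnessing model $\mathcal{X}$ over $A$, spread out the isomorphism of $k$-fibres to a $\ZZ$-finitely generated subring containing both $A$ and $B$ over which $\mathcal{X}$ and $\mathcal{Y}$ become isomorphic, and then note that $\Im[\pi_0(\mathcal Y(B))\to\pi_0(\mathcal Y(k))]$ is contained in the corresponding (finite) image over that larger ring. Your write-up merely makes the spreading-out step and the comparison diagram explicit, which the paper leaves as a one-line assertion.
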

\begin{proof}
Since $X$ is arithmetically hyperbolic, there exist a $\ZZ$-finitely generated subring    $A\subset k$ and a model $\mathcal X$ of $X$ over $A$  such that, for all $\ZZ$-finitely generated subrings  $A'\subset k$ containing $A$, the set 
$\Im[ \pi_0(\mathcal X(A'))~\to~\pi_0(\mathcal X(k))]$  is finite. Now, let $B\subset k$ and $\mathcal Y$ be as in the statement of the lemma. Note that there is a $\mathbb Z$-finitely generated subring $C\subset k$ containing     $A$ and $B$ such that $\mathcal X_C \cong \mathcal Y_C$. It follows that $\Im[\pi_0(\mathcal Y(B))\to \pi_0(\mathcal Y(k))]$ is a subset of 
\[\Im[\pi_0(\mathcal Y(C))\to \pi_0(\mathcal Y(k))] = \Im[\pi_0(\mathcal X(C))\to \pi_0(\mathcal X(k))].\] As the latter set is finite, this concludes the proof.
\end{proof}
 
A homomorphism of commutative rings $A \to B$ is called \emph{smooth} if the morphism $\Spec B \to \Spec A$ is smooth.

 \begin{lemma}[Can test on smooth subrings]\label{lem:smooth}   A finitely presented  algebraic stack $X$ over $k$ is arithmetically hyperbolic over $k$ if and only if there exists a
  $\ZZ$-finitely generated subring    $A\subset k$ and a model $\mathcal X$ of $X$ over $A$  such that, for all $\ZZ$-finitely generated  subrings  $A \subset A'\subset k$ which are \textbf{smooth} over $A$, the set 
	$$\Im[ \pi_0(\mathcal X(A'))~\to~\pi_0(\mathcal X(k))]$$  is finite. 
 \end{lemma}
 
\begin{proof} 

The first implication is clear. For the reverse implication, let $  B\subset k$ be a $\ZZ$-finitely generated subring containing $A$. Since the finitely presented morphism $\Spec B\to \Spec A$ is generically smooth, one can find a $\ZZ$-finitely generated subring   $ B \subset A'\subset k$ which is smooth over $A$. Thus by assumption, the set  $\Im[\pi_0(\mathcal X(A'))\to \pi_0(\mathcal X(k))]$ is finite. This implies that the subset 
 $$\Im[\pi_0(\mathcal X(B))\to \pi_0(\mathcal X(k))]\subset  \Im[\pi_0(\mathcal X(A'))\to \pi_0(\mathcal X(k))]$$ is finite, and concludes the proof.
\end{proof}

%
%

  \begin{remark}\label{REMARK:lim_arg}
If $f:X\to Y$ is a (finitely presented) morphism of finitely presented algebraic stacks over $k$, then there is a $\ZZ$-finitely generated subalgebra $A\subset k$, and (finitely presented) morphism $F:\mathcal{X}\to \mathcal{Y}$ of finitely presented algebraic stacks over $A$ such that  $F_k \cong f$.
 \end{remark}
 
 The following simple lemma can be viewed as confirming the intuitive statement   that a ``space which fibers in hyperbolic spaces over a hyperbolic variety'' is hyperbolic. 
 
  \begin{lemma}[Fibration property]\label{lem:formal_prop}
 	Let $  Y\to Z$ be a    morphism of finitely presented algebraic stacks over $k$. If $Z$ is arithmetically hyperbolic and, for all geometric points $z:\Spec k\to Z$ of $Z$, the algebraic stack $Y_z$ is arithmetically hyperbolic, then 
 	$Y$ is arithmetically hyperbolic. 
 \end{lemma}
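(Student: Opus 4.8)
The plan is to spread the morphism out over a finitely generated base, reduce the statement to a claim about a single model of $Y$, and then analyse $\mathcal Y(A')$ fibrewise over $\mathcal Z(A')$.

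First, using Remark \ref{REMARK:lim_arg}, I would fix a $\ZZ$-finitely generated subring $A_0\subset k$ and a finitely presented morphism $F\colon\mathcal Y\to\mathcal Z$ of finitely presented algebraic stacks over $A_0$ with $F_k\cong (Y\to Z)$. Since $Z$ is arithmetically hyperbolic, Lemma \ref{lem:arithm_hyp} applied to the model $\mathcal Z_{A'}$ over $A'$ shows that for every $\ZZ$-finitely generated subring $A'\subset k$ containing $A_0$ the set $\Im[\pi_0(\mathcal Z(A'))\to\pi_0(\mathcal Z(k))]$ is finite. It then suffices to prove that $\Im[\pi_0(\mathcal Y(A'))\to\pi_0(\mathcal Y(k))]$ is finite for every such $A'$, since this says exactly that the model $\mathcal Y$ over $A_0$ witnesses the arithmetic hyperbolicity of $Y$.

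Now fix such an $A'$. For $z\in\mathcal Z(A')$ set $\mathcal W_z:=\mathcal Y\times_{\mathcal Z,z}\Spec A'$; this is a finitely presented algebraic stack over $A'$ with generic fibre $(\mathcal W_z)_k\cong Y\times_{Z,z_k}\Spec k=Y_{z_k}$, and the projection $\mathcal W_z\to\mathcal Y$ induces $\mathcal W_z(A')\to\mathcal Y(A')$. Since any $y\in\mathcal Y(A')$ lifts to the point $(y,\mathrm{id})$ of $\mathcal W_{f(y)}(A')$, one obtains
\[
\Im[\pi_0(\mathcal Y(A'))\to\pi_0(\mathcal Y(k))]\;=\;\bigcup_{[z]\in\pi_0(\mathcal Z(A'))}\Im[\pi_0(\mathcal W_z(A'))\to\pi_0(\mathcal Y(k))].
\]
By hypothesis $Y_{z_k}$ is arithmetically hyperbolic, so $\mathcal W_z$ is a model over $A'$ of an arithmetically hyperbolic stack; hence Lemma \ref{lem:arithm_hyp} gives that $\Im[\pi_0(\mathcal W_z(A'))\to\pi_0(\mathcal W_z(k))]$, and a fortiori its image in $\pi_0(\mathcal Y(k))$, is finite. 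Moreover arithmetic hyperbolicity of $Z$ implies that the geometric fibres $z_k$ meet only finitely many isomorphism classes $\zeta_1,\dots,\zeta_m$ in $\pi_0(\mathcal Z(k))$ as $z$ runs over $\mathcal Z(A')$, so the union above is a finite union over $j$ of the sets $U_j:=\bigcup_{[z]\,:\,z_k\cong\zeta_j}\Im[\pi_0(\mathcal W_z(A'))\to\pi_0(\mathcal Y(k))]$.

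The remaining — and main — difficulty is to show that each $U_j$ is finite: a priori there may be infinitely many pairwise non-isomorphic lifts $z\in\mathcal Z(A')$ of a fixed $\zeta_j$ (already the fibre of $\pi_0(\mathcal Z(A'))\to\pi_0(\mathcal Z(k))$ over the unique point can be $\Pic(A')$ when $\mathcal Z=B\Gm$), so one must argue that together they contribute only finitely many geometric points to $\mathcal Y(A')$. The route is to fix one lift $z_j$ and observe that every $\mathcal W_z$ with $z_k\cong\zeta_j$ is, compatibly with the map to $\mathcal Y$, a twisted form of $\mathcal W_{z_j}$ by a class in $\HH^1\!\big(A',\underline{\Aut}_{\mathcal Z}(z_j)\big)$, where $\underline{\Aut}_{\mathcal Z}(z_j)$ is a finite type group scheme over $A'$ with $k$-fibre $\underline{\Aut}_k(\zeta_j)$ (here one uses that $\mathcal Z$ is finitely presented); after passing to the rigidification/coarse moduli of $\mathcal Z$ so that this group becomes finite \'etale, Lemma \ref{lem:H^1_finite} shows that only finitely many forms $\mathcal W_z$ occur up to $A'$-isomorphism, and each contributes a finite set by the previous paragraph. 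Equivalently, if infinitely many pairwise non-isomorphic $\mathcal W_z$ each produced a new geometric point of $\mathcal Y(A')$, one would extract from Lemma \ref{lem:arithm_hyp} applied to $Y_{\zeta_j}$ a contradiction; reducing the possibly non--Deligne--Mumford situation to the case where the relevant automorphism group schemes are finite is precisely where the work lies. Once each $U_j$ is finite, $\Im[\pi_0(\mathcal Y(A'))\to\pi_0(\mathcal Y(k))]$ is a finite union of finite sets, and the proof is complete.
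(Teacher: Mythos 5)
Your set-up (spreading out via Remark \ref{REMARK:lim_arg}, reducing to finiteness of $\Im[\pi_0(\mathcal Y(A'))\to\pi_0(\mathcal Y(k))]$ for each $A'$, and organising this image according to the induced map to $\mathcal Z$) is the same as the paper's. The divergence, and the genuine gap, is in how you try to bound the contribution of a fixed geometric class $[\zeta_j]\in\pi_0(\mathcal Z(k))$. You rightly observe that $[\zeta_j]$ may have infinitely many pairwise non-isomorphic lifts $z\in\mathcal Z(A')$, but the mechanism you propose to handle this does not work: the automorphism group scheme $\underline{\Aut}_{\mathcal Z}(z_j)$ of a point of a general finitely presented stack need not be finite (for $\mathcal Z=B\Gm$ it is $\Gm$, and $\HH^1(A',\Gm)=\Pic(A')$ can be infinite for higher-dimensional $A'$ --- your own example), passing to a rigidification or coarse space of $\mathcal Z$ changes the fibres of $\mathcal Y\to\mathcal Z$ (the inertia is absorbed into them) and so does not reduce to a finite \'etale group, and hence Lemma \ref{lem:H^1_finite} is not applicable; the claim that ``only finitely many forms $\mathcal W_z$ occur up to $A'$-isomorphism'' is simply false in general. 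You acknowledge this (``is precisely where the work lies''), and the closing sentence about extracting a contradiction from Lemma \ref{lem:arithm_hyp} is not an argument, so the central finiteness step of your proof is left open.

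For comparison, the paper does not count twisted forms at all. It considers the map of sets $\Im[\pi_0(\mathcal Y(A'))\to\pi_0(\mathcal Y(k))]\to\Im[\pi_0(\mathcal Z(A'))\to\pi_0(\mathcal Z(k))]$, whose target is finite by arithmetic hyperbolicity of $Z$, fixes for each geometric class $[z]$ in the target a single integral lift $\widetilde z$, and bounds the fibre over $[z]$ by $\Im[\pi_0(\mathcal Y_{\widetilde z}(A'))\to\pi_0(\mathcal Y_z(k))]$, which is finite by the independence-of-model lemma (Lemma \ref{lem:arithm_hyp}), since $\mathcal Y_{\widetilde z}$ is a model over $A'$ of the arithmetically hyperbolic stack $Y_z$. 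So the finiteness actually invoked is finiteness of the image of the integral points of one model of the \emph{fibre}, not finiteness of a set of forms of the fibre. If you want to complete your variant, the point you must address is why the contributions of all the lifts of $[z]$ are captured by the image attached to a single model (this is exactly the containment the paper asserts when it fixes $\widetilde z$); trying instead to bound the number of lifts, or of the models $\mathcal W_z$ up to $A'$-isomorphism, cannot succeed.
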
  
 \begin{proof} We first use Remark \ref{REMARK:lim_arg} and the arithmetic hyperbolicity of $Z$ over $k$ to spread out $Y\to Z$ over some $A\subset k$. More precisely, 
let $A\subset k$ be an integrally closed $\mathbb Z$-finitely generated subring and let $\mathcal Y\to \mathcal Z$ be a morphism of finitely presented algebraic stacks over $A$ which is isomorphic to $Y\to Z$ over $k$ such that, for all $\ZZ$-finitely generated subrings  $A'\subset k$ containing $A$, the set   
	$$\Im[ \pi_0(\mathcal Z(A'))~\to~\pi_0(\mathcal Z(k))]$$  is finite.     Let $A'\subset k$ be a $\mathbb Z$-finitely generated subring.  To prove the lemma, it suffices to show that the set 
 	\[ \Im[ \pi_0 (\mathcal Y(A')) \to \pi_0(\mathcal Y(k))]\] is finite. To do so, consider the natural morphism of sets 
 	\[
 	  \Im[ \pi_0 (\mathcal Y(A')) \to \pi_0(\mathcal Y(k))]\to \Im[ \pi_0(\mathcal Z(A'))~\to~\pi_0(\mathcal Z(k))]
 	\]  Let $z:\Spec k\to Z$ be a point in the image of the latter morphism of sets, and let $\widetilde{z}:\Spec A'\to \mathcal{Z}$ be an extension of $z$ over $A'$. Since $\Im[ \pi_0(\mathcal Z(A'))~\to~\pi_0(\mathcal Z(k))]$ is finite, it suffices to show that the fibre over $z$ is finite.  However, the fibre over $z$ is contained in the set $$  \Im[ \pi_0 (\mathcal Y_{\widetilde{z}}(A')) \to \pi_0(\mathcal Y_z(k))] .$$ Since $Y_z$ is arithmetically hyperbolic and $\mathcal{Y}_{\widetilde{z}}$ is a model for $Y_z$, the latter set is finite (Lemma \ref{lem:arithm_hyp}).
 \end{proof}

\begin{lemma}\label{lem:c} Let $\sigma:k\to L$ be a morphism of algebraically closed fields of characteristic zero. Let $X$ be a finitely presented algebraic stack over $k$. If $X_L = X\otimes_{k,\sigma} L$ is arithmetically hyperbolic over $L$, then $X$ is arithmetically hyperbolic over $k$.
\end{lemma}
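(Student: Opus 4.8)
The plan is to transport the finiteness of integral points from $X_L$ back to $X$ along $\sigma$, the crucial ingredient being that the map $\pi_0(X(k)) \to \pi_0(X_L(L))$ induced by $\sigma$ is injective.

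\emph{Step 1: injectivity of $\pi_0(X(k)) \to \pi_0(X_L(L))$.} Given objects $x, y$ of the groupoid $X(k)$, the sheaf $\underline{\Isom}_k(x,y)$ is representable by an algebraic space locally of finite type over $k$; this is because the diagonal of $X$ is representable by algebraic spaces and, as $X$ is finitely presented over $k$, also (quasi-compact and) locally of finite presentation, and $\underline{\Isom}_k(x,y)$ is obtained from this diagonal by base change along $(x,y)\colon \Spec k \to X\times_k X$. Since the formation of $\underline{\Isom}$ commutes with base change, $\underline{\Isom}_k(x,y)\times_k L \cong \underline{\Isom}_L(x_L, y_L)$. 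Thus if $x_L \cong y_L$ in $X_L(L)$, the algebraic space $\underline{\Isom}_k(x,y)$ has an $L$-point, hence is non-empty; as a non-empty algebraic space locally of finite type over the algebraically closed field $k$ has a $k$-rational point, we conclude $x \cong y$ in $X(k)$. This proves the injectivity.

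\emph{Step 2: spreading out and comparison.} Fix any $\ZZ$-finitely generated subring $A \subset k$ and a model $\mathcal X$ of $X$ over $A$; by Definition~\ref{defn:arithmetic_hyperbolicity} it suffices to show that $\Im[\pi_0(\mathcal X(A')) \to \pi_0(\mathcal X(k))]$ is finite for every $\ZZ$-finitely generated subring $A \subset A' \subset k$. Since $\sigma$ is injective, it restricts to an isomorphism $A' \xrightarrow{\sim} \sigma(A')$ onto a $\ZZ$-finitely generated subring of $L$, and $\mathcal X_{\sigma(A')} := \mathcal X \times_{A,\sigma} \sigma(A')$ is a model of $X_L$ over $\sigma(A')$. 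The base-change functors along $\sigma$ organise into a square of groupoids, commutative up to natural isomorphism,
\[
\xymatrix{
\mathcal X(A') \ar[r] \ar[d] & \mathcal X_{\sigma(A')}(\sigma(A')) \ar[d] \\
\mathcal X(k) \ar[r] & X_L(L)
}
\]
(both composites send $\xi\colon \Spec A' \to \mathcal X$ to its base change along $\Spec L \to \Spec A'$ given by $\sigma|_{A'}$), hence a commutative square after applying $\pi_0$. Applying Lemma~\ref{lem:arithm_hyp} to the arithmetically hyperbolic stack $X_L$ over $L$, with the subring $\sigma(A')$ and the model $\mathcal X_{\sigma(A')}$, shows that $\Im[\pi_0(\mathcal X_{\sigma(A')}(\sigma(A'))) \to \pi_0(X_L(L))]$ is finite; therefore so is its subset $\Im[\pi_0(\mathcal X(A')) \to \pi_0(X_L(L))]$.

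\emph{Step 3: conclusion.} By Step 1 the injection $\pi_0(\mathcal X(k)) = \pi_0(X(k)) \hookrightarrow \pi_0(X_L(L))$ carries $\Im[\pi_0(\mathcal X(A')) \to \pi_0(\mathcal X(k))]$ bijectively onto $\Im[\pi_0(\mathcal X(A')) \to \pi_0(X_L(L))]$, which is finite by Step 2; hence $\Im[\pi_0(\mathcal X(A')) \to \pi_0(\mathcal X(k))]$ is finite, and $X$ is arithmetically hyperbolic over $k$. The only step that is not pure formalism is the injectivity of $\pi_0(X(k)) \to \pi_0(X_L(L))$ in Step 1, and I expect this to be the main point to pin down; it is handled by the representability and finite-type-ness of the relevant $\Isom$-spaces together with the existence of rational points on non-empty algebraic spaces of finite type over an algebraically closed field.
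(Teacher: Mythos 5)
Your proof is correct and follows essentially the same route as the paper, which simply notes that $\sigma$ carries $\ZZ$-finitely generated subrings of $k$ to $\ZZ$-finitely generated subrings of $L$ and leaves the rest implicit. Your Step 1 (injectivity of $\pi_0(X(k))\to\pi_0(X_L(L))$ via the $\Isom$-space having a $k$-point once it is non-empty) is precisely the detail hidden in the paper's ``easily allows one to conclude,'' and you supply it correctly.
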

\begin{proof}
If $A\subset k$ is a  $\ZZ$-finitely generated subalgebra $A\subset k$, then $\sigma(A)\subset L$ is  a $\ZZ$-finitely generated subalgebra of $L$. This observation easily allows one to conclude.
\end{proof}

\begin{remark}[Persistence Conjecture]
It seems reasonable to suspect that the converse of Lemma \ref{lem:c} holds. The so-called ``Persistence Conjecture'' predicts this for varieties (see \cite[Conjecture~1.15]{vBJK}), but it might be just as reasonable for stacks. More precisely, let $L/k$ be an extension of algebraically closed fields of characteristic zero and let $X$ be a variety over $k$. Then,  the  Persistence Conjecture says that  the variety $X$ is arithmetically hyperbolic over $k$ if and only if $X_L$ is arithmetically hyperbolic over $L$. This conjecture is verified for projective   varieties under additional ``boundedness'' or ``hyperbolicity'' conditions in \cite[\S 4]{Jaut}, e.g., if $X$ is algebraically hyperbolic over $k$. Moreover, it is shown to hold for varieties with a quasi-finite (complex-analytic) period map in \cite[Theorem~1.4]{JLitt}, hyperbolically embeddable smooth affine  varieties in \cite[Theorem~1.4]{JLevin}, varieties with a quasi-finite morphism to a semi-abelian variety in \cite[Theorem~7.4]{vBJK}, and certain moduli spaces  of polarized varieties in \cite[Theorem~1.5]{JSZ}. 
\end{remark}

If $\sigma$ is an element of $\Aut(k)$ and $X$ is an  algebraic stack over $k$, we let $X^\sigma$ be the algebraic stack $X\times_{k,\sigma} k$.
\begin{lemma}[Conjugation property]\label{LEMMA:conj}
Let $\sigma$ be a field automorphism of $k$. Let $X$ be a finitely presented algebraic stack over $k$. If $X$ is arithmetically hyperbolic over $k$, then $X^\sigma$ is arithmetically hyperbolic over $k$.
\end{lemma}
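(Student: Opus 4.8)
The plan is to reduce the statement to Lemma \ref{lem:c}, but applied to the stack $X^{\sigma}$ together with the \emph{inverse} automorphism $\sigma^{-1}$ of $k$, rather than to $X$ and $\sigma$. First I would record the (routine) functoriality of base change along field homomorphisms: for composable homomorphisms $\alpha\colon k\to k'$ and $\beta\colon k'\to k''$ of fields, associativity of fibre products gives a canonical isomorphism
\[
(X\otimes_{k,\alpha}k')\otimes_{k',\beta}k''\;\cong\;X\otimes_{k,\beta\circ\alpha}k''
\]
of algebraic stacks over $k''$. Taking $\alpha=\sigma$, $\beta=\sigma^{-1}$ and $k'=k''=k$ yields a canonical isomorphism $(X^{\sigma})\otimes_{k,\sigma^{-1}}k\cong X\otimes_{k,\mathrm{id}}k=X$ over $k$. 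I would also note that $X^{\sigma}$ is again a finitely presented algebraic stack over $k$, since finite presentation is stable under base change, so that Lemma \ref{lem:c} is applicable to it.

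Next I would invoke Lemma \ref{lem:c} with ``$k$'' $=k$, ``$X$'' $=X^{\sigma}$, ``$\sigma$'' $=\sigma^{-1}\colon k\to k$ and ``$L$'' $=k$. Its hypothesis asks that $(X^{\sigma})\otimes_{k,\sigma^{-1}}k$ be arithmetically hyperbolic over $k$; by the previous paragraph this stack is isomorphic to $X$, which is arithmetically hyperbolic over $k$ by assumption. The conclusion of Lemma \ref{lem:c} is then precisely that $X^{\sigma}$ is arithmetically hyperbolic over $k$, which is what we want.

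I do not expect any genuine obstacle here; the only point requiring care is the bookkeeping of the composition convention, namely that it is $\sigma^{-1}$ — not $\sigma$ — that must be fed into Lemma \ref{lem:c}. If one prefers to avoid Lemma \ref{lem:c}, the same conclusion can be obtained by unwinding Definition \ref{defn:arithmetic_hyperbolicity} directly: if $A\subset k$ is a $\ZZ$-finitely generated subring and $\mathcal{X}$ a model of $X$ over $A$ witnessing arithmetic hyperbolicity, then $\sigma(A)\subset k$ is again $\ZZ$-finitely generated, $\mathcal{X}\otimes_{A,\sigma}\sigma(A)$ is a model of $X^{\sigma}$ over $\sigma(A)$, and for every $\ZZ$-finitely generated $\sigma(A)\subset A''\subset k$ the isomorphism $\sigma\colon\sigma^{-1}(A'')\xrightarrow{\ \sim\ }A''$ induces a bijection $\pi_0(\mathcal{X}(\sigma^{-1}(A'')))\cong\pi_0((\mathcal{X}^{\sigma})(A''))$ compatible with the maps to $\pi_0(X(k))\cong\pi_0(X^{\sigma}(k))$; the required finiteness of $\Im[\pi_0(\mathcal{X}^{\sigma}(A''))\to\pi_0(X^{\sigma}(k))]$ then follows immediately. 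This is essentially the same argument as the one proving Lemma \ref{lem:c}.
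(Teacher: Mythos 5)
Your proof is correct and is exactly the paper's argument: the paper proves this lemma by simply citing Lemma \ref{lem:c}, and your instantiation of that lemma with the stack $X^{\sigma}$ and the automorphism $\sigma^{-1}$ (using $(X^{\sigma})\otimes_{k,\sigma^{-1}}k\cong X$) is the intended application. The direct unwinding of Definition \ref{defn:arithmetic_hyperbolicity} you sketch at the end is likewise just the proof of Lemma \ref{lem:c} itself, so nothing genuinely different is added.
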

\begin{proof} This follows from Lemma \ref{lem:c}.
\end{proof}


 \begin{lemma}\label{lem:G_gerbes_are_AH}
 Let $X\to Y$ be a morphism of finitely presented algebraic stacks over $k$. If $Y$ is arithmetically hyperbolic and $X\to Y$ is a gerbe, then $X$ is arithmetically hyperbolic.
 \end{lemma}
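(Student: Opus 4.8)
The plan is to reduce this to the Fibration property (Lemma~\ref{lem:formal_prop}), applied to the morphism $X\to Y$ itself. Since $Y$ is arithmetically hyperbolic, Lemma~\ref{lem:formal_prop} reduces the claim to showing that for every geometric point $z:\Spec k\to Y$ the fibre $X_z:=X\times_{Y,z}\Spec k$ is arithmetically hyperbolic over $k$. Note that $X_z$ is a finitely presented algebraic stack over $k$ (as $X$ and $Y$ are), so the notion applies to it, and $X_z\to\Spec k$ is again a gerbe, since gerbes are stable under base change.

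So everything comes down to the following claim: \emph{any finitely presented gerbe $\mathcal G$ over $\Spec k$ is arithmetically hyperbolic over $k$.} In fact I will prove the stronger statement that $\pi_0(\mathcal G(k))$ consists of a single element. Granting this, for any $\ZZ$-finitely generated subring $A\subset k$, any model $\mathcal H$ of $\mathcal G$ over $A$, and any $\ZZ$-finitely generated subring $A\subset A'\subset k$, the set $\Im[\pi_0(\mathcal H(A'))\to\pi_0(\mathcal G(k))]$ is a subset of a one-element set, hence finite; so $\mathcal G$ is arithmetically hyperbolic by Definition~\ref{defn:arithmetic_hyperbolicity}. (This is exactly the phenomenon illustrated by the example of $B\mu_2$ over $\QQ$ in the remarks following Definition~\ref{defn:arithmetic_hyperbolicity}: passing to geometric points collapses all the rational points to one.)

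To prove the claim, I first note that $\mathcal G$ is non-empty, since by definition the structure morphism of a gerbe is an fppf epimorphism onto its base. As $\mathcal G$ is of finite type over the algebraically closed field $k$, a smooth atlas of $\mathcal G$ is then a non-empty finite-type $k$-scheme, which has a $k$-point; hence $\mathcal G(k)\neq\emptyset$. Fix $\xi\in\mathcal G(k)$ and let $G:=\underline{\Aut}(\xi)$ be its automorphism group, a group algebraic space locally of finite type over $k$ and therefore a group scheme. Since $\mathcal G$ is a gerbe admitting the global object $\xi$, it is neutral: $\mathcal G\cong BG$, so $\pi_0(\mathcal G(k))=\HH^1(k,G)$. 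Finally, any $G$-torsor over $\Spec k$ is a non-empty finite-type algebraic space over $k$, hence has a $k$-point and is therefore the trivial torsor; thus $\HH^1(k,G)=\{\ast\}$, which proves the claim and hence the lemma.

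The argument is essentially formal once Lemma~\ref{lem:formal_prop} is available, so I do not anticipate a genuine obstacle. The one point requiring (standard) care is the identification of a gerbe over an algebraically closed field with a classifying stack $BG$ together with the vanishing of $\HH^1(k,G)$ over $k$; everything else is bookkeeping with Definition~\ref{defn:arithmetic_hyperbolicity} and the fibration lemma.
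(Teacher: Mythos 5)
Your proposal is correct and follows essentially the same route as the paper: reduce via the fibration property (Lemma~\ref{lem:formal_prop}) to the fact that a gerbe over the algebraically closed field $k$ has $\pi_0$ of its $k$-points a singleton, hence is arithmetically hyperbolic. The only difference is that you spell out the neutrality of the gerbe and the vanishing of $\HH^1(k,G)$, which the paper simply asserts; this is a correct and welcome amplification, not a different argument.
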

 \begin{proof} If $B$ is a gerbe over $k$, then  $\pi_0(B(k))$ is a singleton.   Thus, a gerbe over $k$ is arithmetically hyperbolic over $k$. Therefore, for all $y$ in $Y(k)$, as the fibre $X_y$  is a gerbe over $k$, we see that the fibres of $X\to Y$ are  arithmetically
	hyperbolic. Therefore,    the lemma follows from the fibration property (Lemma \ref{lem:formal_prop}).
 \end{proof}

	

\begin{lemma}\label{lem:reducedness}
	Let $X$ be a finitely presented algebraic stack over $k$, and let $X_{\textrm{red}}$ be the associated reduced algebraic stack. The algebraic stack $X$ is arithmetically hyperbolic if and only if $X_{\textrm{red}}$ is arithmetically hyperbolic.
\end{lemma}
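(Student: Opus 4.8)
The plan is to exploit that the closed immersion $\mathcal Y_{\mathrm{red}}\to\mathcal Y$ of an algebraic stack induces an equivalence on $T$-points for every \emph{reduced} scheme $T$, together with the fact that every $\ZZ$-finitely generated subring of $k$ --- as well as $k$ itself --- is reduced. Since arithmetic hyperbolicity is tested only on $T$-points of this kind, the conditions defining arithmetic hyperbolicity for $X$ and for $X_{\mathrm{red}}$ will coincide, once we have checked that the reduction of a model of $X$ is a model of $X_{\mathrm{red}}$.

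First I would record the following. For any algebraic stack $\mathcal Y$ and any reduced scheme $T$, the closed immersion $\mathcal Y_{\mathrm{red}}\to\mathcal Y$ induces an equivalence of groupoids $\mathcal Y_{\mathrm{red}}(T)\xrightarrow{\ \sim\ }\mathcal Y(T)$: it is fully faithful because a closed immersion is a monomorphism of algebraic stacks, and essentially surjective because every morphism to $\mathcal Y$ from a reduced algebraic stack --- in particular from a reduced scheme --- factors through $\mathcal Y_{\mathrm{red}}$ by the universal property of the reduction \cite[Tag~0509]{stacks-project}. In particular $\pi_0(\mathcal Y_{\mathrm{red}}(T))=\pi_0(\mathcal Y(T))$, naturally in $T$, and this applies with $T=\Spec A'$ for any $\ZZ$-finitely generated subring $A'\subset k$ (which is a domain, hence reduced) and with $T=\Spec k$.

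Second --- and this is the step I expect to require an actual argument rather than mere bookkeeping --- I would check that for every $\ZZ$-finitely generated subring $A\subset k$ and every model $\mathcal X$ of $X$ over $A$, the reduced closed substack $\mathcal X_{\mathrm{red}}$ is a model of $X_{\mathrm{red}}$ over $A$. Since closed immersions and surjections are stable under base change, $\mathcal X_{\mathrm{red}}\times_A k$ is a closed substack of $\mathcal X\times_A k=X$ whose underlying topological space is all of $|X|$; hence it equals $X_{\mathrm{red}}$ provided it is reduced. Writing $K=\mathrm{Frac}(A)$ and using that $A\hookrightarrow k$ factors through $K$, we have $\mathcal X_{\mathrm{red}}\times_A k=(\mathcal X_{\mathrm{red}}\times_A K)\times_K k$. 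Now $\mathcal X_{\mathrm{red}}\times_A K$ is a localization of $\mathcal X_{\mathrm{red}}$ --- it is the fibre over the generic point of $\Spec A$ --- hence reduced, and it is of finite type over $K$; since $\mathrm{char}\,K=0$ the field $K$ is perfect, so a reduced finite type algebraic stack over $K$ is geometrically reduced. Therefore $\mathcal X_{\mathrm{red}}\times_A k$ is reduced and equals $X_{\mathrm{red}}$, as wanted.

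Finally I would combine the two steps. If $X$ is arithmetically hyperbolic, fix $A$ and a model $\mathcal X$ of $X$ over $A$ witnessing this. For every $\ZZ$-finitely generated $A\subset A'\subset k$ the first step gives
\[
\Im\big[\pi_0(\mathcal X_{\mathrm{red}}(A'))\to\pi_0(\mathcal X_{\mathrm{red}}(k))\big]=\Im\big[\pi_0(\mathcal X(A'))\to\pi_0(\mathcal X(k))\big],
\]
which is finite, and by the second step $\mathcal X_{\mathrm{red}}$ is a model of $X_{\mathrm{red}}$ over $A$; hence $X_{\mathrm{red}}$ is arithmetically hyperbolic. Conversely, assume $X_{\mathrm{red}}$ is arithmetically hyperbolic and fix any $\ZZ$-finitely generated $A\subset k$ and any model $\mathcal X$ of $X$ over $A$. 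For $A\subset A'\subset k$, the second step applied over $A'$ to the model $\mathcal X\times_A A'$ of $X$ shows that $(\mathcal X\times_A A')_{\mathrm{red}}$ is a model of $X_{\mathrm{red}}$ over $A'$, so by Lemma~\ref{lem:arithm_hyp} the set $\Im[\pi_0((\mathcal X\times_A A')_{\mathrm{red}}(A'))\to\pi_0((\mathcal X\times_A A')_{\mathrm{red}}(k))]$ is finite; by the first step this equals $\Im[\pi_0(\mathcal X(A'))\to\pi_0(\mathcal X(k))]$. Thus $\mathcal X$ witnesses the arithmetic hyperbolicity of $X$, completing the proof.
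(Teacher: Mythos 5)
Your proposal is correct. The ``only if'' direction of your argument, however, genuinely differs from the paper's: the paper deduces that $X_{\mathrm{red}}$ is arithmetically hyperbolic from the fibration property (Lemma~\ref{lem:formal_prop}), using that the $k$-fibres of $X_{\mathrm{red}}\to X$ are trivially arithmetically hyperbolic, whereas you obtain it directly from the equivalence $\mathcal X_{\mathrm{red}}(T)\simeq\mathcal X(T)$ for reduced $T$, which identifies the relevant image sets on the nose. For the ``if'' direction your route is essentially the paper's: since $\Spec A'$ and $\Spec k$ are reduced, integral points of $\mathcal X$ factor through $\mathcal X_{\mathrm{red}}$, and one concludes via the independence-of-model lemma (Lemma~\ref{lem:arithm_hyp}); the paper phrases this as surjectivity of the induced map of image sets rather than your equality, which makes no difference. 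What your write-up buys is that it makes explicit a point the paper leaves implicit: applying Lemma~\ref{lem:arithm_hyp} to $\mathcal X_{\mathrm{red}}$ (resp.\ to $(\mathcal X\times_A A')_{\mathrm{red}}$) requires knowing that the reduction of a model of $X$ is a model of $X_{\mathrm{red}}$, i.e.\ that $\mathcal X_{\mathrm{red}}\times_A k$ is still reduced; your argument via the generic fibre and perfectness of $K$ in characteristic zero settles this cleanly (one should also note, trivially, that $\mathcal X_{\mathrm{red}}$ is finitely presented over the noetherian ring $A$, so it is indeed a model in the sense of the paper). The only cosmetic caveat is that the universal property of the reduction should be cited at the level of algebraic stacks rather than schemes, but the statement you use is standard and correct.
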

\begin{proof} 
	Note that the fibres of the morphism $X_{\textrm{red}}\to X$ are arithmetically hyperbolic. In particular, if $X$ is arithmetically hyperbolic, then   $X_{\textrm{red}}$ is arithmetically hyperbolic (Lemma \ref{lem:formal_prop}). Conversely, 
	assume that $X_{\textrm{red}}$ is arithmetically hyperbolic. 
	Let $A\subset k$ be a finitely generated $\ZZ$-algebra and let $\mathcal{X}$ be a model for $X$ over $A$.  Since $A$ is an integral domain,  every morphism $\Spec A\to \mathcal X$ factors uniquely via $\mathcal{X}_{\textrm{red}}$. Thus, the natural map of sets
	\[ \Im[ \pi_0(\mathcal X_{\textrm{red}}(A))~\to~\pi_0(\mathcal X_{\textrm{red}}(k))] \to \Im[ \pi_0(\mathcal X(A))~\to~\pi_0(\mathcal X(k))] \] is surjective.  Since $X_{\textrm{red}}$ is arithmetically hyperbolic, the set $$\Im[ \pi_0(\mathcal X_{\textrm{red}}(A))~\to~\pi_0(\mathcal X_{\textrm{red}}(k))] $$ is finite (Lemma \ref{lem:arithm_hyp}). We conclude that $X$ is arithmetically hyperbolic.
\end{proof}

 \begin{proposition}\label{prop:qf}
 	Let $  Y\to Z$ be a quasi-finite morphism of finitely presented algebraic stacks over $k$. If $Z$ is arithmetically hyperbolic, then 
 	$Y$ is arithmetically hyperbolic. 
 \end{proposition}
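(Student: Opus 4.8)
The plan is to deduce this from the fibration property (Lemma \ref{lem:formal_prop}); the only thing one has to verify is that every geometric fibre of $Y \to Z$ is arithmetically hyperbolic over $k$.

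First I would isolate the following elementary observation, already implicit in the proof of Lemma \ref{lem:G_gerbes_are_AH}: \emph{every finitely presented algebraic stack $W$ over $k$ with $\pi_0(W(k))$ finite is arithmetically hyperbolic over $k$}. Indeed, $W$ admits a model $\mathcal{W}$ over some $\ZZ$-finitely generated subring $A \subset k$, and for every $\ZZ$-finitely generated subring $A \subset A' \subset k$ the set $\Im[\pi_0(\mathcal{W}(A')) \to \pi_0(\mathcal{W}(k))]$ is contained in $\pi_0(\mathcal{W}(k)) = \pi_0(W(k))$, hence is finite.

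Next, I would fix a geometric point $z : \Spec k \to Z$ and consider the fibre $Y_z = Y \times_{Z, z} \Spec k$. Since $Y \to Z$ is quasi-finite it is, by definition, finitely presented and locally quasi-finite; hence $Y_z$ is a finitely presented algebraic stack over $k$, and the groupoid $Y_z(k)$ is finite (see the Conventions), i.e. $\pi_0(Y_z(k))$ is a finite set. By the observation of the previous paragraph, $Y_z$ is arithmetically hyperbolic over $k$.

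Finally, since $Y \to Z$ is a morphism of finitely presented algebraic stacks over $k$, the stack $Z$ is arithmetically hyperbolic by hypothesis, and all geometric fibres of $Y \to Z$ are arithmetically hyperbolic by the previous step, Lemma \ref{lem:formal_prop} applies and yields that $Y$ is arithmetically hyperbolic over $k$. There is no genuine obstacle to overcome here: all the work is done by the fibration property, and the finiteness of $\pi_0$ of the fibres is built into the notion of a quasi-finite morphism.
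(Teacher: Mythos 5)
Your argument is correct and is essentially the paper's own proof: reduce to the fibres via the fibration property (Lemma \ref{lem:formal_prop}) and observe that each geometric fibre, being a quasi-finite finitely presented stack over $k$, has finite $\pi_0$ of its $k$-points and is therefore trivially arithmetically hyperbolic. You merely spell out the finiteness observation that the paper dismisses as ``clear''.
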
  
 \begin{proof}
 	By Lemma  \ref{lem:formal_prop}, it suffices to show that the    $k$-fibres of $Y\to Z$ are arithmetically hyperbolic. However, as the   $k$-fibres are     quasi-finite (finitely presented) algebraic stacks over $k$, they are clearly arithmetically hyperbolic.
 \end{proof}

\begin{lemma}\label{lem:irreducible}
		Let $X$ be a finitely presented algebraic stack over $k$. Then $X$ is arithmetically hyperbolic if and only if all irreducible components of $X$  are arithmetically hyperbolic.
\end{lemma}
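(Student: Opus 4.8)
The plan is to prove both directions by reducing to the already-established formal properties, in particular the fibration property (Lemma \ref{lem:formal_prop}), the reducedness invariance (Lemma \ref{lem:reducedness}), and independence of model (Lemma \ref{lem:arithm_hyp}). First I would observe that by Lemma \ref{lem:reducedness} we may assume $X$ is reduced, so that the irreducible components $X_1, \ldots, X_r$ of $X$ are honest closed substacks and $X = \bigcup_{i=1}^r X_i$.

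For the ``only if'' direction, each inclusion $X_i \hookrightarrow X$ is a (locally closed, in fact closed under the reducedness assumption) immersion, hence quasi-finite in the sense of the paper's conventions, so Proposition \ref{prop:qf} immediately gives that each $X_i$ is arithmetically hyperbolic whenever $X$ is. (Alternatively one argues directly: a model $\mathcal{X}$ for $X$ restricts to a model $\mathcal{X}_i$ for $X_i$ for which $\pi_0(\mathcal{X}_i(A')) \to \pi_0(\mathcal{X}(A'))$ is induced by a closed immersion, so the relevant image set for $X_i$ injects into that for $X$.)

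For the ``if'' direction, suppose every $X_i$ is arithmetically hyperbolic. Spread out: choose a $\ZZ$-finitely generated subring $A \subset k$ and a model $\mathcal{X}$ for $X$ over $A$ such that the (finitely many) irreducible components of $\mathcal{X}$ base-change to the $X_i$; after enlarging $A$ we may arrange that $\mathcal{X} = \bigcup_{i=1}^r \mathcal{X}_i$ with each $\mathcal{X}_i$ a model for $X_i$ over $A$, and that $\mathcal{X}$ is reduced. For any $\ZZ$-finitely generated subring $A \subset A' \subset k$, since $A'$ is a domain any morphism $\Spec A' \to \mathcal{X}$ factors through some irreducible component $\mathcal{X}_i$ (the generic point of $\Spec A'$ maps into the dense open locus lying in a single component, and by reducedness and the fact that each $\mathcal{X}_i$ is closed the whole map factors through that $\mathcal{X}_i$). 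Hence the natural map
\[
\bigsqcup_{i=1}^r \Im\!\left[\pi_0(\mathcal{X}_i(A')) \to \pi_0(\mathcal{X}_i(k))\right] \longrightarrow \Im\!\left[\pi_0(\mathcal{X}(A')) \to \pi_0(\mathcal{X}(k))\right]
\]
is surjective. Each set on the left is finite by Lemma \ref{lem:arithm_hyp} applied to the arithmetically hyperbolic stack $X_i$ and the model $\mathcal{X}_i$, and there are finitely many of them, so the right-hand set is finite. Therefore $X$ is arithmetically hyperbolic.

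I expect the only genuine subtlety to be the claim that a morphism from $\Spec A'$ (with $A'$ a $\ZZ$-finitely generated domain) into a reduced stack $\mathcal{X}$ with irreducible components $\mathcal{X}_i$ must factor through one of the $\mathcal{X}_i$; this is the stacky analogue of the elementary scheme fact and should follow by passing to a smooth presentation or by looking at the image of the generic point together with reducedness. Everything else is a formal consequence of the properties already developed in this section, so this is a short lemma rather than one with a serious obstacle.
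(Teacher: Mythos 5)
Your proof is correct and takes essentially the same approach as the paper: the ``only if'' direction is exactly the paper's application of Proposition \ref{prop:qf} to the quasi-finite inclusion of an irreducible component, and your spreading-out and factorization argument for the converse is a fleshed-out version of the paper's one-line appeal to the fact that $X$ has only finitely many irreducible components. The only point needing mild care is that one should arrange for $\mathcal{X}$ to be covered by closed substacks that are models of the $X_i$ (rather than insisting that the irreducible components of $\mathcal{X}$ themselves base-change to the $X_i$, which need not be exactly achievable), and your enlargement of $A$ together with the factorization of a point of an integral affine scheme through a single reduced closed substack handles this.
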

\begin{proof}
Suppose that $X$ is arithmetically hyperbolic. Let $Z$ be an irreducible component of $X$. Since $Z\to X$ is quasi-finite, it follows that $Z$ is arithmetically hyperbolic (Proposition \ref{prop:qf}). The converse follows from the fact that $X$ has only finitely many irreducible components.
\end{proof}

 \subsection{The twisting lemma} The notion of arithmetic hyperbolicity is a priori a condition on $k$-points, with $k$ algebraically closed. We show in this section that under certain assumptions on the model, we can in fact deduce finiteness results for $A$-valued points.
 We refer to this result as the ``twisting lemma (for arithmetic hyperbolicity)'', as it is concerned with the finiteness of twists of a given $A$-object of a stack.

 The twisting lemma below  generalizes our previous  results for canonically polarized varieties \cite[Lem.~4.1]{Jav15}, complete intersections \cite[Thm.~4.10]{JL}, and certain Fano varieties \cite[Prop.~4.7]{JLFano}. It is very useful in applications; it says that, in certain cases, to deduce finiteness of (isomorphism classes of) integral points, it suffices to prove the a priori weaker claim that $X$ is arithmetically hyperbolic. This latter property  can be tackled using more geometric techniques.  We give such applications in \S\ref{sec:appplications}.

The following is the key step, which says that under suitable assumptions, there are only finitely many isomorphism classes of integral points $y$ that become isomorphic to a given integral point $x$ over the algebraic closure.  The ``finite diagonal'' condition in the statement holds for example for  finite type separated Deligne--Mumford stacks. (Indeed, as unramified morphisms are locally quasi-finite \cite[Tag~02V5]{stacks-project}, the diagonal of such a stack is proper and unramified, hence   finite unramified.)  Recall that we work over an algebraically closed field $k$ of characteristic $0$.

\begin{proposition} \label{prop:twist} 
	Let $A\subset k$ be a $\mathbb{Z}$-finitely generated integrally closed subring and  $\mathcal{X}$ a finite type algebraic stack over $A$ with finite diagonal. Then the map of sets $$\pi_0(\mathcal{X}(A))\to \pi_0(\mathcal{X}(k))$$ has finite fibres. 
\end{proposition}
\begin{proof}
 	Let $x \in \mathcal{X}(A)$. To prove the result it suffices to show that the set
	\begin{equation} \label{eqn:twist}
		\pi_0(\{ y \in \mathcal{X}(A) : y_k \cong x_k\}) \quad \text{is finite}.
	\end{equation}
	(Here $x_k,y_k$ denote the image of $x,y$ in $\mathcal{X}(k)$, respectively.)
	We claim that  the map of sets $$ \pi_0(\mathcal X(A))\to \pi_0(\mathcal X(K)) $$ is injective, where $K=\mathrm{Frac}(A)$. Indeed, for all $y \in \mathcal X(A)$ with $x_k\cong y_k$ the scheme $\Isom_{A}(x,y)\to \Spec A$ is finite as the diagonal of $\mathcal{X}$ is finite. Thus, as $\Spec A$ is integral and normal, the closure of a generic section of $\Isom_{A}(x,y)\to \Spec A$  is a section  \cite[Tag 0AB1]{stacks-project}.	
	
	Therefore to prove \eqref{eqn:twist}, we are free to replace $\Spec A$ by a dense open subset.
	In particular, as the inertia group scheme $I_x$ is finite over $A$ and $K$
	has characteristic $0$, on changing $A$ we may assume that $I_x$ is finite \'etale
	over $A$.
	
	To prove \eqref{eqn:twist}, for $y \in \mathcal{X}(A)$ with $x_k \cong y_k$
	we shall show that the  map
	\begin{equation} \label{eqn:torsor}
		I_x \times_A \Isom_{A}(x,y) \to \Isom_{A}(x,y)
	\end{equation}
	makes $\Isom_{A}(x,y)$ into  an $I_x$-torsor over $A$. As $I_x$ is finite \'etale and
	$$I_x \times_A \Isom_{A}(x,y) \to \Isom_{A}(x,y) \times_A \Isom_{A}(x,y) $$
	is an isomorphism,  it suffices to show that $\Isom_{A}(x,y)$ is faithfully flat over $A$  \cite[Prop.~III.4.1]{MilneEC}.
	
	By \cite[Ex.~5.1.15]{Liu2}, it suffices to show that degree 
	of each fibre of $\Isom_{A}(x,y) \to \Spec A$ is constant
	(viewed as a finite scheme over a field).
	To do so, choose a finite field extension $L$ of $K$ such that $y_L \cong x_L$
	and let $B$ be the normalisation of $A$ in $L$. 
	Then,   since the $A$-scheme $\Isom_{A}(x,y)$ has an $L$-point and $B$ is integral and normal, the $A$-scheme $\Isom_{A}(x,y)$  has  a $B$-point. 
	This implies that $I_{x,B} \cong \Isom_{B}(x,y)$ over $B$, so that 
	$\Isom_{B}(x,y) \to \Spec B$ is finite \'etale. But then for a finite \'etale morphism,
	the degree of each fibre is constant, whence it easily follows that the same holds
	over $A$. This shows that $\Isom_{A}(x,y)$ is  an $I_x$-torsor over $A$ (and that $\Isom_A(x,y)$
	is even finite \'etale over $A$).
	
	We now prove \eqref{eqn:twist}. By a standard argument \cite[p.~134]{MilneEC},
	for $y_1,y_2 \in \mathcal{X}(A)$, if $\Isom_{A}(x,y_1) \cong \Isom_{A}(x,y_2)$ as $I_x$-torsors then $y_1 \cong y_2$. 
	Therefore, the   set of  $A$-isomorphism classes of $y$ in $\mathcal{X}(A)$ with $y_k\cong x_k$ in $\mathcal{X}(k)$ is a subset of $\HH^1(A, I_x)$. Since $\Spec A$ is an arithmetic scheme and $I_x\to \Spec A$ is a finite \'etale group scheme, the latter set is finite by Lemma \ref{lem:H^1_finite}. The result follows.
\end{proof}

\begin{corollary} \label{cor:twists}
Let $A\subset k$ be a $\mathbb{Z}$-finitely generated integrally closed subring with fraction field $K=\mathrm{Frac}(A)$, and let $\mathcal{X}$ be a finite type  algebraic stack over $A$ with finite diagonal. Then the following statements are equivalent.
\begin{enumerate}
\item The set $\mathrm{Im}[ \pi_0(\mathcal{X}(A)) \to \pi_0(\mathcal{X}(k))]$ is finite. 
\item The set $\mathrm{Im}[ \pi_0(\mathcal{X}(A)) \to \pi_0(\mathcal{X}(K))]$ is finite.
\item The set $\pi_0(\mathcal{X}(A))$ is finite.
\end{enumerate}
\end{corollary}
\begin{proof}
	$(3) \Rightarrow (2) \Rightarrow (1)$ is clear. Then $(1) \Rightarrow (3)$
	follows from Proposition~\ref{prop:twist}.
\end{proof}

We also rephrase this property in terms of arithmetic hyperbolicity.

\begin{theorem}[Twisting lemma]\label{thm:equivalences}
	Let $X$ be a finite type  stack   over $k$ with finite diagonal. The following statements are equivalent.
	\begin{enumerate}
		\item The stack $X$ is arithmetically hyperbolic over $k$. 
		\item 	For   all $\ZZ$-finitely generated   subrings $A\subset k$ with fraction field $K$  and all     models $\mathcal X\to \Spec A$ for $X$ over $A$, the  set   $$\Im[\pi_0(\mathcal X(A))\to \pi_0(\mathcal X(K)) ]$$ is finite.
		\item For all $\ZZ$-finitely generated integrally closed subrings $A\subset k$  and all     models $\mathcal X\to \Spec A$ for $X$ over $A$ with $\mathcal{X}$ an algebraic stack  over $A$ with finite diagonal, the  set  $ \pi_0(\mathcal X(A)) $ of isomorphism classes of $A$-integral points on $\mathcal{X}$ is finite.
	\end{enumerate}
\end{theorem}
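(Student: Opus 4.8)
The strategy is to prove the cycle $(1)\Rightarrow(2)\Rightarrow(3)\Rightarrow(1)$, of which only $(1)\Rightarrow(2)$ is serious. Throughout I would use two routine reductions. First, if $A\subset k$ is $\ZZ$-finitely generated with fraction field $K$ and $\mathcal X$ is a model for $X$ over $A$, then replacing $A$ by its normalization in $K$ (again $\ZZ$-finitely generated, since the normalization of a $\ZZ$-finitely generated domain is finite over it) and then by a localization, and adjusting $\mathcal X$ accordingly, we may assume that $A$ is normal and that $\mathcal X$ is a separated Deligne--Mumford stack over $A$ (the latter by standard spreading-out, as $\mathcal X_k\cong X$ has this property); this leaves $K$ unchanged and only enlarges $\Im[\pi_0(\mathcal X(A))\to\pi_0(\mathcal X(K))]$, so it is harmless for the statements at hand. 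Second, once $A$ is normal and $\mathcal X/A$ is separated Deligne--Mumford --- more generally, whenever $\mathcal X/A$ is separated with affine diagonal, so that its diagonal is proper and affine, hence finite --- for any $P,Q\in\mathcal X(A)$ the scheme $\underline{\Isom}_{\mathcal X/A}(P,Q)$ is finite over $\Spec A$, so a $K$-point of it has image a module-finite $A$-subalgebra of $K$, which is integral over $A$, hence contained in $A$; thus every $K$-isomorphism $P_K\cong Q_K$ extends over $A$, i.e.~$\pi_0(\mathcal X(A))\hookrightarrow\pi_0(\mathcal X(K))$.

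Granting these, $(2)\Rightarrow(3)$ is immediate: for $A$ integrally closed and $\mathcal X/A$ separated with affine diagonal, the second reduction gives $\pi_0(\mathcal X(A))\hookrightarrow\pi_0(\mathcal X(K))$, the image is finite by $(2)$, hence $\pi_0(\mathcal X(A))$ is finite. For $(3)\Rightarrow(1)$: by spreading out, $X$ admits a separated Deligne--Mumford model $\mathcal Y$ over some normal $\ZZ$-finitely generated $A\subset k$; for any $\ZZ$-finitely generated $A\subset A'\subset k$, replacing $A'$ by its normalization in $\mathrm{Frac}(A')$ produces a normal $\ZZ$-finitely generated $A''$ over which $\mathcal Y_{A''}$ is again a separated Deligne--Mumford model, so $\pi_0(\mathcal Y_{A''}(A''))$ is finite by $(3)$; since $\Im[\pi_0(\mathcal Y(A'))\to\pi_0(\mathcal Y(k))]$ is contained in the image of the finite set $\pi_0(\mathcal Y(A''))=\pi_0(\mathcal Y_{A''}(A''))$, it is finite, and this is arithmetic hyperbolicity of $X$.

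It remains to prove $(1)\Rightarrow(2)$. By the first reduction we may assume $A$ is normal and $\mathcal X/A$ is separated Deligne--Mumford, and by the second it suffices to show $\Im[\pi_0(\mathcal X(A))\to\pi_0(\mathcal X(K))]$ is finite. Mapping further to $\pi_0(\mathcal X(k))$ via $K\subset k$, this image lands inside $\Im[\pi_0(\mathcal X(A))\to\pi_0(\mathcal X(k))]$, which is finite by arithmetic hyperbolicity (Lemma \ref{lem:arithm_hyp}); so it is enough to bound, for each $\xi$ in this finite set, the set $\{[Q_K]: Q\in\mathcal X(A),\ Q_k\cong\xi\}$. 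Fix such a $\xi$, a representative $P\in\mathcal X(A)$ with $P_k\cong\xi$, and a dense open $U=\Spec A_f\subseteq\Spec A$ over which the finite unramified group scheme $G:=\underline{\Aut}_{\mathcal X/A}(P)$ is finite \'etale (generic flatness). For $Q\in\mathcal X(A)$ with $Q_k\cong\xi$, the $U$-scheme $T_Q:=\underline{\Isom}_{\mathcal X_U/U}(P|_U,Q|_U)$ is finite and unramified over $U$, surjective (its image is closed, nonempty and contains the generic point of $U$, since $Q_k\cong P_k$), and a pseudo-torsor under $G|_U$; a short argument using surjectivity, constancy of the fibre degree $\#G$, and the local structure of finite unramified morphisms shows $T_Q$ is \'etale-locally on $U$ a trivial $G|_U$-torsor, hence finite \'etale, and so defines a class in $\HH^1_{\et}(U,G|_U)$. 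By Lemma \ref{lem:H^1_finite}(1) this set is finite ($U$ being an integral arithmetic scheme, since $A$ is flat over $\ZZ$ as $k$ has characteristic zero), so the set $\Sigma_\xi$ of restrictions of its elements to $\HH^1(K,G_K)$ is finite. The class of the $G_K$-torsor $\underline{\Isom}_{\mathcal X_K/K}(P_K,Q_K)$ is such a restriction, hence lies in $\Sigma_\xi$, and it recovers $[Q_K]$: an isomorphism of $G_K$-torsors $\underline{\Isom}_{\mathcal X_K/K}(P_K,Q_K)\cong\underline{\Isom}_{\mathcal X_K/K}(P_K,Q'_K)$ descends to an isomorphism $Q_K\cong Q'_K$ (equivalently $Q_K\cong P_K\wedge^{G_K}\underline{\Isom}_{\mathcal X_K/K}(P_K,Q_K)$). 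Thus $\{[Q_K]:Q_k\cong\xi\}$ embeds into the finite set $\Sigma_\xi$, and summing over the finitely many $\xi$ finishes the proof.

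The hard part is exactly $(1)\Rightarrow(2)$: arithmetic hyperbolicity is a statement about $k$-points only, so one must push finiteness down the fibres of $\pi_0(\mathcal X(A))\to\pi_0(\mathcal X(k))$. Two points require care. First, normality of $A$ together with finiteness of the diagonal is what makes $K$-isomorphisms of integral points automatic and, above all, makes shrinking $\Spec A$ harmless. Second, one should resist trying to spread the Isom-torsor out to a finite \'etale torsor over all of $\Spec A$: that would force a shrinking of $\Spec A$ which re-enlarges the set of geometric classes one is trying to control, leading in circles. Instead one argues over $\Spec K$ and over a dense open $U$ depending only on the chosen representative $P$, and invokes Hermite--Minkowski in the form ``$\HH^1_{\et}(U,G|_U)$ is finite'' (Lemma \ref{lem:H^1_finite}). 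Everything else --- the two reductions, the descent and twisting identifications, and the implications $(2)\Rightarrow(3)$ and $(3)\Rightarrow(1)$ --- is routine.
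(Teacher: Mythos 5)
Your proposal is correct and takes essentially the same route as the paper: the same cycle of implications, the same proof of $(2)\Rightarrow(3)$ via finiteness of the diagonal and normality of $A$ forcing $\pi_0(\mathcal X(A))\hookrightarrow\pi_0(\mathcal X(K))$, and the same twisting argument for $(1)\Rightarrow(2)$, bounding the fibres of $\pi_0(\mathcal X(A))\to\pi_0(\mathcal X(k))$ by classes of $\Isom$-torsors under the finite \'etale automorphism group scheme and invoking Lemma \ref{lem:H^1_finite}. The only cosmetic difference is how torsor-ness of the $\Isom$-scheme is established (your surjectivity/constant-fibre-degree/flatness argument over a dense open $U$, versus the paper's production of a point over the normalization of $B$ in a finite extension $L$ using properness), which does not change the substance.
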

\begin{proof}
$(2) \Rightarrow (1)$ is clear. Next $(1) \Rightarrow (3)$ follows from 
	Corollary \ref{cor:twists} and Lemma \ref{lem:arithm_hyp}.
	It thus remains to show $(3) \Rightarrow (2)$. Let $A \subset A'\subset k$ be a $\ZZ$-finitely generated subring such that $\mathcal{X}_{A'}$ is an algebraic stack with finite diagonal. This exists as  ``having finite diagonal'' spreads out \cite[Prop.~B.3]{Rydh2}. Therefore, by $(3)$,   the set $\pi_0(\mathcal{X}(A'))$ is finite. Thus the set  $\mathrm{Im}[ \pi_0(\mathcal{X}(A)) \to \pi_0(\mathcal{X}(k))] \subset \mathrm{Im}[ \pi_0(\mathcal{X}(A')) \to \pi_0(\mathcal{X}(k))]$ is finite.
\end{proof}

\begin{remark} \label{remark:M_1}
The conclusion of Theorem \ref{thm:equivalences} can fail for  separated non-Deligne--Mumford   stacks over $k$.  Indeed, let $E$ be an elliptic curve over $\QQ$ with finite Mordell--Weil group, $\mathcal{E}$ its N\'eron model  and $n$ the product of primes of bad reduction of $E$. Then $\mathcal{E}$ has infinitely many pairwise non-isomorphic twists over $A:=\mathbb{Z}[1/n]$, by the footnote on \cite[p.~241]{Maz86}; thus $$\Im[\pi_0(B\mathcal{E}(A))\to \pi_0(B\mathcal{E}(\mathbb{Q}))] = \Im[\mathrm{H}^1(\Spec A, \mathcal{E}) \to \mathrm{H}^1(\Spec \mathbb{Q},\mathcal{E}_{\mathbb{Q}})]$$ is infinite. 
In particular part (2) of Theorem \ref{thm:equivalences} does not hold in this case, even though $BE$ is arithmetically hyperbolic for any elliptic curve $E$ over $k$ (Lemma \ref{lem:G_gerbes_are_AH}). (To see that $BE\to \Spec k$ is separated, note that $E=\Spec k \times_{BE} \Spec k$ is proper over $k$.)

This also shows that the smooth arithmetically hyperbolic finitely presented   separated stack $\mathcal{M}_1$ of smooth proper genus one curves fails Theorem \ref{thm:equivalences}. (For $S$ a scheme, every object of $\mathcal{M}_1(S)$ is a smooth proper morphism $f:X\to S$  of algebraic spaces whose geometric fibres are smooth proper connected curves of genus one.)  
\end{remark}

\begin{remark}  
Let $\mathcal{G}$ be an affine finite type group scheme over $\ZZ$. Note that the finitely presented algebraic stack $B\mathcal{G}_k$ is arithmetically hyperbolic over $k$ for ``trivial'' reasons (Lemma \ref{lem:G_gerbes_are_AH}).  We expect that the integral points on $B\mathcal{G}$ satisfy a stronger finiteness property. Indeed, it seems reasonable to suspect that, for all finitely generated subrings $A\subset k$ with $K:=\mathrm{Frac}(A)$, the set  $$\mathrm{Im}[\mathrm{H}^1(A,\mathcal{G})\to \mathrm{H}^1(K, \mathcal{G}_K)]$$ is finite; see \cite{Rapinchucks, Rapinchucks2, GilleMoretBailly, JL2} for related results. For instance, by \cite[Prop.~5.1]{GilleMoretBailly}, this finiteness holds  if $\dim A=1$.  Also, in \cite[Thm.~9.1.(i)]{Rapinchucks2}, this finiteness result is proven under the assumption that $\dim A = 2$ and $\mathcal{G}$ is the Chevalley group of type $G_2$ (and also in some other cases). 
\end{remark}

\begin{remark} 
Many natural moduli problems over $\QQ$ are finitely presented separated Deligne--Mumford stacks over $\QQ$. However, the natural model for such a stack over $\ZZ$ might not be Deligne--Mumford nor separated. For instance, the stack of smooth plane cubic curves $\mathcal{C}_{(3;1)}$ is a finite type algebraic stack with finite diagonal which is Deligne--Mumford over $\ZZ[1/3]$, but not  over $\ZZ$.
\end{remark}

\section{Chevalley--Weil for algebraic stacks} \label{sec:CW}
The classical theorem of Chevalley--Weil \cite{CW} implies that, if $f:X\to Y$ is a finite \'etale morphism of  algebraic varieties over $\Qbar$, then $X$ is arithmetically hyperbolic if and only if $Y$ is arithmetically hyperbolic. This theorem can be considered as an arithmetic analogue of the similar statement for Brody hyperbolic varieties: if $X\to Y$ is a finite \'etale morphism of algebraic varieties over $\CC$, then $X$ is Brody hyperbolic if and only if $Y$ is Brody hyperbolic. 

We generalize this consequence of the classical Chevalley--Weil theorem to  proper \'etale morphisms of algebraic stacks which may not be representable (e.g.~gerbes), using our version of Hermite--Minkowski. The precise statement (also stated as Theorem \ref{thm:chev_weil_intro}) reads as follows.

\begin{theorem}\label{thm:chev_weil}  
	Let $f:X\to Y$ be a proper \'etale surjective morphism of finitely presented algebraic stacks over an algebraically closed field $k$ of characteristic $0$. Then $X$ is arithmetically hyperbolic over $k$ if and only if $Y$ is arithmetically hyperbolic over $k$.
\end{theorem}
\begin{proof}   
Note that $X\to Y$ is   quasi-finite. Therefore, if $Y$ is arithmetically hyperbolic, then it follows from Proposition \ref{prop:qf}  that $X$ is arithmetically hyperbolic. 

Let us now assume that  $X$ is arithmetically hyperbolic. By Lemma \ref{lem:reducedness}, we may and do assume that $Y$ (and thus $X$) is reduced. Moreover, by Lemma   \ref{lem:irreducible}, we may and do assume that $Y$ is irreducible, hence integral. 
 
	Let $A\subset k$ be a smooth finitely generated $\mathbb Z$-algebra and let $\mathcal X\to \mathcal Y$ be a proper \'etale  morphism of finitely presented integral algebraic stacks over $A$ which is isomorphic to $X\to Y$ after base-change to $k$ and for which the factorial of the inertia degree of every point of  $X$ over $Y$ is invertible on $A$; such data exists because proper \'etale surjective morphisms spread out  \cite[Prop.~B.3]{Rydh2} and the relative inertia stack is finite \'etale (Lemma \ref{lem:inertia}). By Lemma \ref{lem:smooth} it suffices to show that $\Im[\pi_0(\mathcal Y(A))\to \pi_0(\mathcal Y(k))]$ is finite. 	 
	For $\Spec A\to \mathcal Y$ an $A$-point, consider the Cartesian diagram 
	\[
	\xymatrix{B \ar[rr] \ar[d]_{\textrm{proper \'etale surjective}} &  & \mathcal X \ar[d]^{\textrm{proper \'etale surjective}} \\ \Spec A \ar[rr] & & \mathcal Y}
	\]
	By Lemma \ref{lem:constant} we have
	$
	\deg(B/A) = \deg(X/Y).
	$
%
	By Lemma \ref{lem:inertia}, the morphism $I_{\mathcal{X}/\mathcal{Y}} \to \mathcal{X}$
	is finite \'etale. Moreover, by \cite[Tag~06PQ]{stacks-project},
	the relative inertia stack $I_{B/A} \to B$
	is the pull-back of $I_{\mathcal{X}/\mathcal{Y}} \to \mathcal{X}$, hence
	the inertia degree of each point of $B$ over $A$ is less than the maximum of
	the 	inertia degrees of the points of $\mathcal{X}$ over $\mathcal{Y}$.
	As $X$ and $Y$ are fixed, it follows from Hermite--Minkowski for stacks (Theorem \ref{Cor:HM}) that the set of $A$-isomorphism classes of   algebraic stacks $B$ appearing as the pull-back of an $A$-point of $\mathcal Y$ is finite. 
 Therefore, we can simultaneously ``trivialize'' every $B$ appearing from this construction. That is, there is a $\ZZ$-finitely generated subring $A'\subset k$ containing $A$   such that, for all $B$ appearing from the above construction, there is a morphism $\Spec A'\to B$. We fix for each $B$ such a morphism $\Spec A'\to B$.  In a diagram, the situation looks as follows:
 \[
	\xymatrix{& & B \ar[rr] \ar[d] &  & \mathcal X \ar[d]  \\ \Spec A'  \ar[urr] \ar[rr] & & \Spec A \ar[rr] & & \mathcal Y.}
 \]
 
 We now show that $\Im[\pi_0(\mathcal Y(A))\to \pi_0(\mathcal Y(k))]$ is finite. Firstly, 
by what we have shown above,  the image of the natural map $\pi_0(\mathcal X(A')) \to \pi_0(\mathcal Y(A'))$  contains the subset $\mathrm{Im}[\pi_0(\mathcal{Y}(A))\to \pi_0(\mathcal{Y}(A'))]$, i.e., every $A$-point of $\mathcal{Y}$ when viewed as an $A'$-point comes from some $A'$-point on $\mathcal{X}$. However, since $X$ is arithmetically hyperbolic, the set $\Im[\pi_0(\mathcal X(A'))\to \pi_0(\mathcal X(k))]$ is finite. It follows that  the image of the natural map of sets $\Im[\pi_0(\mathcal{X}(A'))\to \pi_0(\mathcal{X}(k))]\to \Im[\pi_0(\mathcal Y(A'))\to \pi_0(\mathcal Y(k))]$  is also finite. Since the image of this map contains $\Im[\pi_0(\mathcal Y(A))\to \pi_0(\mathcal Y(k))]$ as a subset (as was shown  above), the latter set is also finite and the result follows.
\end{proof}

\begin{proof}[Proof of Theorem \ref{thm:cv_intro}]
 Let $A\subset \mathbb C$ be an integrally closed   $\ZZ$-finitely generated subring, and let $Y$ be a finite type separated Deligne--Mumford   stack over $A$. Let $X\to Y$ be a  morphism such that $X_{\CC}\to Y_{\CC}$ is proper \'etale surjective. Suppose that for  every $\ZZ$-finitely generated  subring $A \subset B \subset \CC$ which is \'etale over $A$, the groupoid $X(B)$ is finite. Take $k$ to be the algebraic closure of the field of fractions of $A$.

Let $A \subset A' \subset k$ be a $\ZZ$-finitely generated  subring which is smooth over $A$. As $A \subset A'$ is generically finite, there exist a $\ZZ$-finitely generated subring $A' \subset B \subset k$  which is \'etale over $A$. By hypothesis $X(B)$ is finite. But then
$$\Im[\pi_0(X(A'))\to \pi_0(X(k))]\subset  \Im[\pi_0(X(B))\to \pi_0(X(k))]$$ 
is also finite, hence $X_k$ is arithmetically hyperbolic over $k$ by Lemma \ref{lem:smooth}.
 
 By fpqc descent, the morphism $X_k\to Y_k$ is proper \'etale surjective, so $Y_k$ is arithmetically hyperbolic over $k$ by our Chevalley--Weil theorem (Theorem \ref{thm:chev_weil}). Now, since $Y_k$ is arithmetically hyperbolic over $k$ and $Y$ is a finite type separated Deligne--Mumford   stack  over $A$, it follows from the twisting lemma (Theorem \ref{thm:equivalences}) that the groupoid    $Y(A)$ is finite.
\end{proof}

\section{Applications} \label{sec:appplications}
We now give applications of our results to certain surfaces of general type and  prove a transcendental criterion for arithmetic hyperbolicity.

\subsection{Application to the moduli of surfaces of general type}\label{section:gt}
Let $p$ and $q$ be integers.  Let $\mathcal{S}_{p,q}$ be the stack of canonically polarized surfaces $X$ with $p_g(X) = p$ and $q(X) = q$. Thus, for $S$ a scheme, the objects of the groupoid $\mathcal{S}_{p,q}(S)$ are smooth proper morphisms $X\to S$ of schemes whose geometric fibres $X_s$ are smooth projective connected (minimal) surfaces with ample canonical bundle such that $p_g(X_s) = p$ and $q(X_s) = q$. By Matsusaka--Mumford \cite{MatMum}, the stack $\mathcal{S}_{p,q}$ is a locally finite type separated algebraic stack over $\ZZ$. Moreover,    ``boundedness'' for canonically polarised surfaces  with fixed $p_g$ and $q$ implies that $\mathcal{S}_{p,q}$ is of finite type over $\ZZ$; see \cite[Thm.~1.7]{KollarQuotients}. Finally, as $\mathcal{S}_{p,q}$ parametrizes proper varieties with a (canonical) polarization, the diagonal of $\mathcal{S}_{p,q}$ is affine (cf. the proofs of \cite[Lemma~2.1]{JLFano} and \cite[Lemma~2.4]{JLFano}). 

It seems reasonable to suspect that $\mathcal{S}_{p,q,\CC}$ is arithmetically hyperbolic (cf.~\cite[Conjecture~1.1]{Jav15}). Indeed, its subvarieties are of log-general type by a theorem of Campana--Paun \cite{CP}. Moreover, its subvarieties are Brody hyperbolic \cite{VZ} and even Kobayashi hyperbolic \cite{ToYeung, Schumacher}. Also, the stack $\mathcal{S}_{p,q,\CC}$ satisfies a ``function field'' analogue of arithmetic hyperbolicity by a theorem of Kov\'acs--Lieblich \cite{KovacsLieblich}. Thus, in light of Lang--Vojta's conjectures  \cite[\S0.3]{Abr}, ignoring stacky issues, it seems reasonable to suspect that $\mathcal{S}_{p,q,\CC}$ is arithmetically hyperbolic over $\CC$. Our next result gives a modest contribution towards this expectation, and illustrates how one can use our results  by arguing directly on the moduli stack.

\begin{theorem}\label{thm:ar_hyp_gt} For $q\geq 4$, the stack $\mathcal{S}_{2q-4,q,\CC}$ is arithmetically hyperbolic over $\CC$.
\end{theorem}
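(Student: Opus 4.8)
The plan is to use Beauville's classification of these surfaces to reduce the statement to arithmetic hyperbolicity of $\mathcal{M}_g$ for $g\geq 2$ (Example \ref{ex:Shaf_Conj}), transported to stacks via the fibration property (Lemma \ref{lem:formal_prop}), the stacky Chevalley--Weil theorem (Theorem \ref{thm:chev_weil}), and Proposition \ref{prop:qf}. The starting point is the classification in Beauville's appendix to \cite{DebarreClass}: since $q\geq 4$, every canonically polarised surface $X$ over $\CC$ with $q(X)=q$ and $p_g(X)=2q-4$ is isomorphic to a product $B\times F$, where $B$ is a smooth projective curve of genus $q-2\geq 2$ and $F$ is a smooth projective curve of genus $2$. (Conversely, every such product lies in $\mathcal{S}_{2q-4,q,\CC}$, since $\omega_{B\times F}=\omega_B\boxtimes\omega_F$ is ample and K\"unneth gives $p_g=2(q-2)=2q-4$ and $q(X)=(q-2)+2=q$.) If one only extracts from Beauville's result that $X$ carries an isotrivial genus-$2$ fibration over a curve of genus $q-2$, the argument below is unchanged after replacing $\mathcal{M}_{q-2,\CC}\times\mathcal{M}_{2,\CC}$ by a stack quasi-finite over it.

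Next I would promote this pointwise statement to a morphism of algebraic stacks. By Castelnuovo--de Franchis (and de Franchis finiteness), the only fibrations of $B\times F$ onto curves of genus $\geq 2$ with connected fibres are the two projections; hence for $q\geq 5$ the genus-$2$ projection is canonical, $\Aut(B\times F)=\Aut(B)\times\Aut(F)$, and the assignment $X=B\times F\mapsto (B,F)$ spreads out over arbitrary base schemes (the relative Albanese fibration exhibits the product structure in families), yielding an equivalence
\[
\mathcal{S}_{2q-4,q,\CC}\ \xrightarrow{\ \sim\ }\ \mathcal{M}_{q-2,\CC}\times\mathcal{M}_{2,\CC}.
\]
For $q=4$ both factors have genus $2$ and may be interchanged, so one only recovers the unordered pair of fibrations, and instead obtains an equivalence $\mathcal{S}_{4,4,\CC}\xrightarrow{\ \sim\ }[(\mathcal{M}_{2,\CC}\times\mathcal{M}_{2,\CC})/(\ZZ/2\ZZ)]$ for the factor-swap action. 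Even if one can only construct a quasi-finite (rather than invertible) such morphism, Proposition \ref{prop:qf} is enough for the conclusion.

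It then remains to see that the target is arithmetically hyperbolic over $\CC$. The projection $\mathcal{M}_{q-2,\CC}\times\mathcal{M}_{2,\CC}\to\mathcal{M}_{2,\CC}$ has arithmetically hyperbolic base (Example \ref{ex:Shaf_Conj}, as $2\geq 2$) and constant geometric fibre $\mathcal{M}_{q-2,\CC}$, which is arithmetically hyperbolic (Example \ref{ex:Shaf_Conj}, as $q-2\geq 2$); so by the fibration property (Lemma \ref{lem:formal_prop}) the product $\mathcal{M}_{q-2,\CC}\times\mathcal{M}_{2,\CC}$ is arithmetically hyperbolic. For $q=4$, the quotient morphism $\mathcal{M}_{2,\CC}\times\mathcal{M}_{2,\CC}\to[(\mathcal{M}_{2,\CC}\times\mathcal{M}_{2,\CC})/(\ZZ/2\ZZ)]$ is a $\ZZ/2\ZZ$-torsor, hence finite --- in particular proper --- \'etale, so Theorem \ref{thm:chev_weil} shows $[(\mathcal{M}_{2,\CC}\times\mathcal{M}_{2,\CC})/(\ZZ/2\ZZ)]$ is arithmetically hyperbolic. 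Combining this with the (quasi-finite) morphism of the previous step and Proposition \ref{prop:qf} gives that $\mathcal{S}_{2q-4,q,\CC}$ is arithmetically hyperbolic.

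\textbf{Main obstacle.} The crux is the second step: upgrading Beauville's classification of $\CC$-points to a morphism (ideally an equivalence) of algebraic stacks. This requires checking that the product (resp.\ isotrivial-fibration) structure is canonical, so that it spreads out in families --- the relative Albanese must cut out the genus-$2$ fibration uniformly, and the monodromy on the set of genus-$\geq 2$ fibrations must be controlled (trivial for $q\geq 5$, at worst the factor swap for $q=4$) --- together with the determination of $\Aut(B\times F)$. Everything else is a formal consequence of the already established machinery (the fibration property, Chevalley--Weil for stacks, and arithmetic hyperbolicity of $\mathcal{M}_g$ for $g\geq 2$).
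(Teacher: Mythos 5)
Your reduction target is the same as the paper's (arithmetic hyperbolicity of $\mathcal{M}_{q-2,\CC}\times\mathcal{M}_{2,\CC}$, via Example \ref{ex:Shaf_Conj} and Lemma \ref{lem:formal_prop}), but you run the comparison in the opposite direction, and that is exactly where the gap sits. The paper never constructs a morphism $\mathcal{S}_{2q-4,q,\CC}\to\mathcal{M}_{q-2,\CC}\times\mathcal{M}_{2,\CC}$ (nor to the $\ZZ/2\ZZ$-quotient when $q=4$). Instead it uses the tautological morphism the other way, $f\colon\mathcal{M}_{2,\CC}\times\mathcal{M}_{q-2,\CC}\to\mathcal{S}_{2q-4,q,\CC}$, $(F,B)\mapsto F\times B$, which is automatically defined on arbitrary families; Beauville's theorem (appendix to \cite{DebarreClass}) gives surjectivity, connectedness of the source gives connectedness of $\mathcal{S}_{2q-4,q,\CC}$, and the statement that $f$ is finite \'etale is not proved by hand but imported from \cite[Thm.~1.1]{Bhatt}. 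The conclusion then follows from the \emph{descent} direction of Theorem \ref{thm:chev_weil} (the substantive direction, resting on stacky Hermite--Minkowski). In your route the Chevalley--Weil theorem is barely used (only for the $q=4$ quotient), but in exchange you must prove that every family in $\mathcal{S}_{2q-4,q,\CC}(S)$, over an arbitrary (in particular non-reduced) base, decomposes --- at least \'etale-locally, and up to the factor swap when $q=4$ --- as a fibre product of a genus-$2$ family and a genus-$(q-2)$ family.

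That step, which you yourself flag as the main obstacle, is a genuine gap rather than a formality. Beauville's classification, Castelnuovo--de Franchis and de Franchis finiteness are statements about individual surfaces over $\CC$; to promote them to a morphism of stacks you need (i) that products of curves deform only to products, i.e.\ a deformation-theoretic rigidity statement over artinian/non-reduced bases which does not follow from the fibrewise classification, and (ii) that the family of canonical genus-$\geq 2$ fibrations is representable and finite \'etale over the base, so that monodromy (trivial for $q\geq 5$, the swap for $q=4$) is the only obstruction to globalizing the two projections. The phrase ``the relative Albanese fibration exhibits the product structure in families'' conceals exactly these points (existence and canonical splitting of the relative Albanese over a general base, and recovery of the curve factors as families); in this context it is essentially the content of the result the paper cites from \cite{Bhatt}, so as written your argument assumes the hardest input. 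If that input is supplied, the rest of your proposal is sound --- the hedge that a quasi-finite morphism plus Proposition \ref{prop:qf} suffices is correct, as is the $q=4$ treatment of $[(\mathcal{M}_{2,\CC}\times\mathcal{M}_{2,\CC})/(\ZZ/2\ZZ)]$ via the finite \'etale quotient map and Theorem \ref{thm:chev_weil} --- but the paper's orientation of the morphism is precisely what lets one avoid proving the family-level decomposition, at the cost of invoking Chevalley--Weil for stacks, which is the tool the theorem is meant to illustrate.
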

\begin{proof}  For $g\geq 2$, let $\mathcal{M}_g$ be the stack of smooth proper curves of genus $g$ over $\ZZ$. Let $f:\mathcal{M}_{2,\CC} \times \mathcal{M}_{q-2,\CC} \to \mathcal{S}_{2q-4,q,\CC}$   be the morphism which associates to a pair $(X,Y)$ in $\mathcal{M}_{2,\CC}\times\mathcal{M}_{q-2,\CC}$ the object $X\times Y$ in $\mathcal{S}_{2q-4,q, \CC}$. Note that $f$ is a well-defined morphism of algebraic stacks over $\CC$. This morphism is surjective by  the  classification   of minimal surfaces of general type $X$ with $p_g(X) = 2q(X) - 4$  over the complex numbers (see Beauville's theorem   in the appendix to \cite{DebarreClass}). In particular, since $\mathcal{M}_{2,\CC}\times \mathcal{M}_{q-2,\CC}$ is connected, the algebraic stack $\mathcal{S}_{2q-4,q,\CC}$ is connected. Note that, by \cite[Thm.~1.1]{Bhatt}
and the connectedness of $\mathcal{S}_{2q-4,q,\CC}$, this morphism is finite \'etale. For every $g\geq 2$, the stack $\mathcal{M}_{g,\CC}$ is arithmetically hyperbolic over $\CC$ (Example \ref{ex:Shaf_Conj}), so that the stack $\mathcal{M}_{2,\CC}\times \mathcal{M}_{q-2,\CC}$ is arithmetically hyperbolic over $\CC$ (Lemma \ref{lem:formal_prop}). Therefore,  as $f:\mathcal{M}_{2,\CC} \times \mathcal{M}_{q-2,\CC} \to \mathcal{S}_{2q-4,q,\CC}$ is finite \'etale, the result follows from the Chevalley--Weil theorem (Theorem \ref{thm:chev_weil}).
\end{proof}
 
\begin{proof}[Proof of Theorem \ref{thm:shaf_for_gt}] 
Since $\mathcal{S}_{2q-4,q}$ is a finite type separated algebraic stack with affine diagonal over $\ZZ$ and $\mathcal{S}_{2q-4,q,\CC}$ is arithmetically hyperbolic over $\CC$ (Theorem \ref{thm:ar_hyp_gt}), it follows from the twisting lemma (Theorem \ref{thm:equivalences}) that $\pi_0(\mathcal{S}_{2q-4,q}(A))$ is finite.  
\end{proof}

\subsection{A transcendental criterion}\label{section:examples} In this section we use Faltings's   finiteness theorem (formerly the Shafarevich conjecture for principally polarized abelian varieties)  and Borel's algebraization theorem to prove a ``transcendental'' criterion for arithmetic hyperbolicity (Theorem \ref{thm:criterion}).  We view this criterion as a confirmation of Lang's philosophy that complex analytic hyperbolicity should have arithmetic consequences.
 
For an integer $g$, let $\mathcal A_g$ be the stack over $\mathbb Z$ of $g$-dimensional principally polarized abelian schemes. We recall some properties of $\mathcal{A}_g$ proven, for instance, in    \cite{MoretBailly, OlssonAbVar}.  The stack $\mathcal{A}_g$ is a smooth finite type separated Deligne--Mumford   stack over $\mathbb{Z}$ whose coarse space is a quasi-projective scheme over $\ZZ$.  For $n\geq 1$, let $\mathcal{A}_{g}^{[n]}$ be the stack over $\ZZ[1/n]$ of $g$-dimensional principally polarized abelian schemes with a full level $n$-structure. Note that the forgetful functor $\mathcal{A}_{g}^{[n]} \to \mathcal{A}_{g,\ZZ[1/n]}$ is finite \'etale. Moreover, for $n\geq 3$, the stack $\mathcal{A}_{g}^{[n]}$ is (representable by) a quasi-projective scheme over $\ZZ[1/n]$.

If $X$ is a locally finite type scheme over $\CC$, we let $X^{\an}$ be the associated complex analytic space \cite[Expos\'e~XII]{SGA1}. 
Our goal in this section is provide a precise  interplay between the ``analytic'' hyperbolicity of $\mathcal{A}_{g,\CC}$ (i.e., $\mathcal{A}_{g,\CC}^{\an}$ is hyperbolically embedded in its Baily--Borel compactification) and the arithmetic hyperbolicity of $\mathcal{A}_{g,\CC}$ (as proven by Faltings). We start with an analytic property of $\mathcal{A}_{g,\CC}^{\an}$ (which is also studied in \cite{JKuch}).

 	\begin{lemma}[Borel's algebraization theorem]\label{lem:Borel}
 	Let $X$ be a finite type reduced  scheme  over $\CC$, and let $\varphi:X^{\an}\to \mathcal{A}_{g,\CC}^{[3], \an}$ be a morphism. Then $\varphi$ is algebraic, i.e., there is a unique morphism of schemes $f:X\to \mathcal{A}_{g,\CC}^{[3]}$ such that $f^{\an} =\varphi$.
 	\end{lemma}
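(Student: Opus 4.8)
The plan is to invoke Borel's classical algebraization theorem for the level-$3$ moduli space together with GAGA-type rigidity to upgrade a holomorphic map into an algebraic one. First I would recall the setup: for $n \geq 3$ the stack $\mathcal{A}_g^{[n]}$ is representable by a quasi-projective scheme over $\ZZ[1/n]$, so $\mathcal{A}_{g,\CC}^{[3]}$ is a genuine smooth quasi-projective $\CC$-variety, and its analytification is the quotient $\Gamma(3)\backslash \mathfrak{H}_g$ of the Siegel upper half-space by the principal congruence subgroup of level $3$. The core input is Borel's extension/algebraization theorem (see \cite{Borel}, or the formulation in \cite{JKuch}): any holomorphic map from the analytification of a reduced finite-type $\CC$-scheme to an arithmetic locally symmetric variety of the form $\Gamma\backslash D$ with $\Gamma$ neat is automatically algebraic. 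Since $\Gamma(3)$ is neat (torsion-free, and in fact satisfies the stronger neatness condition), this applies verbatim.

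The key steps, in order, would be: (1) reduce to the case $X$ affine and then, since algebraicity is local on the source and the target is separated, one may work Zariski-locally on $X$; alternatively one may pass to a compactification $\bar X$ of $X$ and argue there. (2) Apply Borel's theorem directly: $\varphi : X^{\an} \to (\mathcal{A}_{g,\CC}^{[3]})^{\an}$ extends to a map of compactifications and is algebraic, producing a morphism of schemes $f : X \to \mathcal{A}_{g,\CC}^{[3]}$ with $f^{\an} = \varphi$. (3) Establish uniqueness: two morphisms $f_1, f_2 : X \to \mathcal{A}_{g,\CC}^{[3]}$ of finite-type reduced $\CC$-schemes that agree after analytification are equal, because the locus where they agree is Zariski-closed (the target is separated) and analytically dense (it contains all $\CC$-points by the reduced hypothesis), hence all of $X$. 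For the reduced and non-necessarily-normal case one should note that $X^{\an}$ determines $X(\CC)$ and the scheme structure is determined by $X$ being reduced and finite type over $\CC$, so agreement on $\CC$-points forces $f_1 = f_2$.

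The one genuine subtlety — and the step I expect to require the most care rather than the most difficulty — is the reduction to Borel's hypotheses. Borel's theorem is classically stated for $X$ smooth (or for $X^{\an}$ a complex manifold), whereas here $X$ is only required to be reduced and finite type. I would handle this by resolving singularities: choose a resolution $\pi : \widetilde{X} \to X$ (or a smooth hypercover / the normalization followed by resolution), apply Borel to the smooth scheme $\widetilde{X}$ to algebraize $\varphi \circ \pi^{\an}$, and then descend the resulting algebraic morphism $\widetilde{X} \to \mathcal{A}_{g,\CC}^{[3]}$ back down to $X$ using that $\widetilde X \to X$ is proper surjective with the analytic map factoring through $X^{\an}$, combined with the separatedness of the target and a standard descent-of-morphisms argument (the two pullbacks to $\widetilde X \times_X \widetilde X$ agree analytically, hence algebraically by uniqueness, so the map descends). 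This is routine but is the place where one must be attentive; everything else is a direct citation of Borel plus the separatedness of a quasi-projective target.
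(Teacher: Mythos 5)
Your proposal is correct and follows essentially the same route as the paper: the paper simply notes that $\mathcal{A}_{g,\CC}^{[3],\an}$ is an arithmetic locally symmetric variety (the level-$3$ congruence subgroup being torsion-free) and invokes Borel's algebraization theorem \cite[Thm.~3.10]{Borel1972} (see also \cite[Thm.~5.1]{DeligneK3}), with uniqueness immediate from reducedness and separatedness. One caveat: the cited form of Borel's theorem already applies to reduced finite-type $\CC$-schemes, so your resolution-of-singularities detour is unnecessary, and as literally stated the descent step (``the two pullbacks agree, hence the map descends'') is the one fragile point, since descent of morphisms along proper surjective covers can fail for non-seminormal reduced schemes unless one exploits the analytic factorization through $X^{\an}$ (e.g.\ via the graph of $\varphi$).
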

 	\begin{proof}  The uniqueness of $f$ is clear. Since $\mathcal{A}_{g,\CC}^{[3],\an}$ is a locally symmetric variety, the result follows from   Borel's theorem \cite[Thm.~3.10]{Borel1972} (see also \cite[Thm.~5.1]{DeligneK3}). 
 	\end{proof}

 	We now prove a  generalization of Borel's algebraization theorem to stacks. To state this result, for $X$ a finitely presented algebraic stack   over $\CC$, we let $X^{\an}$ be the associated complex-analytic stack; see \cite[\S6.1]{PortaYu}  for a definition of the stack $X^{\an}$.
 	\begin{proposition}[Stacky Borel algebraization]\label{lem:Borel2}  
 	Let $X$ be a finitely presented  reduced   algebraic stack  over $\CC$, and let $\varphi:X^{\an}\to \mathcal{A}_{g,\CC}^{[3], \an}$ be a morphism. Then $\varphi$ is algebraic, i.e., there is a unique morphism of stacks $f:X\to \mathcal{A}_{g,\CC}^{[3]}$ such that $f^{\an} =\varphi$.  
 	\end{proposition}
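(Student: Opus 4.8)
The plan is to reduce the stacky statement (Proposition \ref{lem:Borel2}) to the scheme-theoretic statement (Lemma \ref{lem:Borel}) by descent along an étale atlas, using the fact that $\mathcal{A}_{g,\CC}^{[3]}$ is a scheme. The uniqueness of $f$ is a formal consequence of the fact that ``algebraic = analytic'' comparison is fully faithful on a suitable category (or can be checked on an atlas), so the real content is existence.

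First I would choose a smooth surjective morphism $U \to X$ from a finite type (reduced, since $X$ is reduced) $\CC$-scheme $U$, and form the groupoid presentation: let $R = U \times_X U$, which is again a finite type $\CC$-scheme, with the two projections $s, t : R \rightrightarrows U$ smooth and a composition law, so that $X = [U/R]$ as a stack. Analytification is compatible with fibre products and with taking quotient stacks (this is exactly the content of the construction of $X^{\an}$ in \cite[\S6.1]{PortaYu}), so $X^{\an} = [U^{\an}/R^{\an}]$. Composing the atlas $U^{\an} \to X^{\an}$ with $\varphi$ gives an analytic morphism $U^{\an} \to \mathcal{A}_{g,\CC}^{[3],\an}$, and since $U$ is a finite type reduced $\CC$-scheme and $\mathcal{A}_{g,\CC}^{[3]}$ is a scheme, Lemma \ref{lem:Borel} produces a unique algebraic morphism $g_U : U \to \mathcal{A}_{g,\CC}^{[3]}$ with $g_U^{\an} = \varphi \circ (U^{\an} \to X^{\an})$. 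Similarly, the two composites $R^{\an} \rightrightarrows U^{\an} \to X^{\an} \to \mathcal{A}_{g,\CC}^{[3],\an}$ agree (both equal the composite $R^{\an} \to X^{\an} \xrightarrow{\varphi} \mathcal{A}_{g,\CC}^{[3],\an}$), and applying Lemma \ref{lem:Borel} to $R$ (again finite type reduced over $\CC$), the uniqueness clause forces $g_U \circ s = g_U \circ t$ as morphisms of schemes $R \to \mathcal{A}_{g,\CC}^{[3]}$. Thus $g_U$ descends to a morphism of stacks $f : X = [U/R] \to \mathcal{A}_{g,\CC}^{[3]}$ (here we use that $\mathcal{A}_{g,\CC}^{[3]}$, being a scheme, is in particular a stack, so an $R$-invariant morphism from the atlas extends to the quotient stack by the universal property of $[U/R]$). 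One then checks $f^{\an} = \varphi$: the two analytic morphisms $X^{\an} \to \mathcal{A}_{g,\CC}^{[3],\an}$ agree after pullback to the atlas $U^{\an}$ (by construction of $g_U$), and agreeing on a surjective submersion atlas implies agreeing on the stack.

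For uniqueness of $f$: if $f, f' : X \to \mathcal{A}_{g,\CC}^{[3]}$ both analytify to $\varphi$, then $f \circ (U \to X)$ and $f' \circ (U \to X)$ are two morphisms of schemes $U \to \mathcal{A}_{g,\CC}^{[3]}$ with the same analytification, hence equal by the uniqueness in Lemma \ref{lem:Borel}; since $U \to X$ is a smooth surjection (an epimorphism of stacks), $f = f'$.

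The main obstacle I expect is purely foundational: making precise that analytification commutes with the formation of quotient stacks and fibre products in the sense needed — i.e., that $X^{\an} = [U^{\an}/R^{\an}]$ and that a morphism out of $X^{\an}$ into a scheme's analytification is the same data as an $R^{\an}$-invariant morphism out of $U^{\an}$ — and that a morphism out of $X$ into a \emph{scheme} is likewise the same as an $R$-invariant morphism out of the atlas. These are standard properties of the analytification functor and of quotient stacks, but they require citing the correct statements from \cite{PortaYu} (compatibility of $(-)^{\an}$ with limits and colimits/quotients) and invoking that the target $\mathcal{A}_{g,\CC}^{[3]}$ is representable by a scheme (so that descent of morphisms \emph{into} it along the groupoid $R \rightrightarrows U$ holds with no higher-categorical subtleties). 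Everything else is a formal diagram chase once Lemma \ref{lem:Borel} is applied to the three finite type reduced schemes $U$, $R$, and $R \times_{s,U,t} R$ (the last only if one wants to check the cocycle condition, which here is automatic since the target is a scheme, i.e., a $0$-truncated stack).
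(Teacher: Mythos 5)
Your proposal is correct and follows essentially the same route as the paper: apply Borel's algebraization (Lemma \ref{lem:Borel}) to a smooth atlas $U \to X$ by a finite type reduced scheme and to $U \times_X U$, then descend using that $\mathcal{A}_{g,\CC}^{[3]}$ is a scheme, so that $\Hom(\cdot,\mathcal{A}_{g,\CC}^{[3]})$ is a sheaf (equivalently, your $R$-invariance/quotient-stack argument). The only quibble --- shared by the paper, which treats it the same way --- is that $U \times_X U$ is a priori only an algebraic space rather than a scheme, so one should either pass to a further \'etale atlas of it or note that Lemma \ref{lem:Borel} extends to reduced finite type algebraic spaces.
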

 	\begin{proof}
 	As $\mathcal{A}_{g,\CC}^{[3]}$ is a scheme, the functor $\Hom(\cdot,\mathcal{A}_{g,\CC}^{[3]})$ is a sheaf for the fppf topology on   stacks over $\CC$. 
 	Let $P\to X$ be a smooth surjective morphism with $P$ a finite type reduced scheme over $\CC$. Then, by Borel's algebraization theorem (Lemma \ref{lem:Borel}), the morphism  $P^{\an}\to \mathcal{A}_{g,\CC}^{[3],\an}$ is the analytification of a unique morphism $P\to \mathcal{A}_{g,\CC}^{[3]}$. Similarly, the morphism $(P\times_X P)^{\an} = P^{\an}\times_{X^{\an}} P^{\an} \to \mathcal{A}_{g,\CC}^{[3],\an}$ is the analytification of a unique morphism $P\times_X P\to \mathcal{A}_{g,\CC}^{[3]}$. Thus, 
 	by the sheaf property of $\mathrm{Hom}(\cdot,\mathcal{A}_{g,\CC}^{[3]})$, 
 	we conclude that the morphism $\varphi: X^{\an}\to \mathcal{A}_{g,\CC}^{[3],\an}$ is the analytification of a unique morphism  $f:X\to \mathcal{A}_{g,\CC}^{[3]}$. 
 	\end{proof}

 	 \begin{theorem}[Transcendental criterion]\label{thm:criterion}
 	 	Let $X$ be a   finitely presented algebraic stack over $\mathbb C$. If there exist a finitely presented algebraic stack $Y$,  a proper \'etale morphism $Y\to X$ and  a quasi-finite holomorphic map $Y^{\textrm{an}} \to \mathcal A_{g,\mathbb C}^{\textrm{an}}$, then $X$ is arithmetically hyperbolic over $\CC$.
 	 \end{theorem}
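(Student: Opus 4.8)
The plan is to reduce, via the Chevalley--Weil theorem (Theorem~\ref{thm:chev_weil}) applied to the proper \'etale morphism $Y\to X$, to the statement that $Y$ is arithmetically hyperbolic over $\CC$, and then to upgrade the holomorphic map $\varphi\colon Y^{\an}\to\mathcal{A}_{g,\CC}^{\an}$ to an honest algebraic \emph{quasi-finite} morphism out of a finite \'etale cover of $Y$. Using Lemma~\ref{lem:reducedness}, and restricting $\varphi$ to $(Y_{\mathrm{red}})^{\an}=(Y^{\an})_{\mathrm{red}}$ (which is still quasi-finite), I would first reduce to the case in which $Y$ is reduced.

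Next I would pass to the level-$3$ moduli scheme. Since $\mathcal{A}_{g,\CC}^{[3]}\to\mathcal{A}_{g,\CC}$ is finite \'etale and $\mathcal{A}_{g,\CC}^{[3]}$ is a quasi-projective scheme, analytification yields a finite \'etale morphism of analytic stacks $\mathcal{A}_{g,\CC}^{[3],\an}\to\mathcal{A}_{g,\CC}^{\an}$. Pulling it back along $\varphi$ produces a finite \'etale cover $Z\to Y^{\an}$ of analytic stacks carrying a quasi-finite holomorphic map $Z\to\mathcal{A}_{g,\CC}^{[3],\an}$. I would then invoke the Riemann existence theorem for finite \'etale covers of finite type algebraic stacks over $\CC$ (which one deduces from the classical statement for schemes by choosing a scheme atlas of $Y$ and descending, as in the proof of Proposition~\ref{lem:Borel2}) to conclude that $Z$ is the analytification of a finite \'etale morphism $\mathcal{Z}\to Y$ of algebraic stacks; note $\mathcal{Z}$ is then finitely presented and reduced. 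Applying the stacky Borel algebraization theorem (Proposition~\ref{lem:Borel2}) to the holomorphic map $\mathcal{Z}^{\an}=Z\to\mathcal{A}_{g,\CC}^{[3],\an}$, I obtain a unique morphism of algebraic stacks $\psi\colon\mathcal{Z}\to\mathcal{A}_{g,\CC}^{[3]}$ whose analytification is this map.

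To finish, I would observe that $\psi^{\an}$ has finite fibres, hence $\psi$ has $0$-dimensional fibres (dimension being detectable on analytifications of finite type $\CC$-stacks) and is therefore quasi-finite; composing with the finite \'etale, hence quasi-finite, morphism $\mathcal{A}_{g,\CC}^{[3]}\to\mathcal{A}_{g,\CC}$ gives a quasi-finite morphism $\mathcal{Z}\to\mathcal{A}_{g,\CC}$. By Faltings's finiteness theorem the stack $\mathcal{A}_{g,\CC}$ is arithmetically hyperbolic over $\CC$ (cf.\ Example~\ref{ex:Shaf_Conj} and the map~\eqref{Faltings}), so Proposition~\ref{prop:qf} shows that $\mathcal{Z}$ is arithmetically hyperbolic over $\CC$. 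Since $\mathcal{Z}\to Y$ is finite \'etale, hence proper \'etale, the Chevalley--Weil theorem (Theorem~\ref{thm:chev_weil}) then yields that $Y$, and therefore $X$, is arithmetically hyperbolic over $\CC$.

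I expect the main obstacle to be the analytic-to-algebraic passage in the middle paragraph: one needs a Riemann existence theorem in the stacky analytic setting to algebraize the pulled-back level cover $Z\to Y^{\an}$, and one must be careful that the morphism $\psi$ produced by Proposition~\ref{lem:Borel2} remains quasi-finite so that Proposition~\ref{prop:qf} applies. Everything downstream (invoking Faltings, Proposition~\ref{prop:qf}, and Chevalley--Weil twice) is formal once the earlier results of the paper are in place. A secondary point requiring care is the input that $\mathcal{A}_{g,\CC}$ is arithmetically hyperbolic over $\CC$, rather than merely over $\Qbar$; this is Faltings's theorem together with the usual spreading-out arguments over finitely generated base rings.
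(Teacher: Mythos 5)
Your proposal is correct and follows essentially the same route as the paper: reduce to the reduced case, pull back the level-$3$ cover $\mathcal{A}_{g,\CC}^{[3],\an}\to\mathcal{A}_{g,\CC}^{\an}$ along the period-type map, algebraize the resulting finite \'etale cover via the stacky Riemann existence theorem (the paper cites Noohi rather than re-deriving it from an atlas), apply the stacky Borel algebraization (Proposition~\ref{lem:Borel2}) to get a quasi-finite morphism to $\mathcal{A}_{g,\CC}^{[3]}$, and conclude with Faltings, Proposition~\ref{prop:qf}, and the Chevalley--Weil theorem. The only differences are cosmetic (you apply Chevalley--Weil in two steps where the paper applies it once to the composite $Z\to Y\to X$, and you spell out the quasi-finiteness of the algebraized map slightly more).
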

 	 \begin{proof} We may and do assume that $X$ is reduced (Lemma \ref{lem:reducedness}).  In particular, $Y$ is  a reduced finitely presented algebraic stack over $\CC$. Moreover, let $Y' =  Y^{\an}\times_{\mathcal{A}_{g,\CC}^{\an}} \mathcal{A}_{g,\CC}^{[3],\an}$. Then the natural holomorphic map $Y'\to Y^{\an}$ is finite \'etale. Therefore, by the stacky version of  Riemann's existence theorem \cite[Thm.~20.4]{Noohi2}, there is a reduced finitely presented algebraic stack $Z$ over $\CC$, a finite \'etale morphism $Z\to Y$, and an isomorphism $Z^{\an} \cong Y'$ over $Y^{\an}$.  
 	 Note that $Z^{\an}\to \mathcal{A}_{g,\CC}^{[3],\an}$ is a quasi-finite holomorphic map.  	 	By stacky Borel algebraization (Proposition \ref{lem:Borel2}), there is a (quasi-finite) morphism $Z\to \mathcal{A}_{g,\CC}^{[3]}$.

 	 	By Faltings's finiteness theorem, the stack $\mathcal{A}_{g,\CC}$ is arithmetically hyperbolic over $\CC$; see \cite{FaltingsComplements} (which builds on \cite{Faltings2}). Therefore, the scheme $\mathcal{A}_{g,\CC}^{[3]}$ is arithmetically hyperbolic by Proposition \ref{prop:qf}.
 Since $\mathcal A_{g,\mathbb C}^{[3]}$ is arithmetically hyperbolic over $\CC$, it   follows from Proposition \ref{prop:qf} that
 	 	   $Z$ is arithmetically hyperbolic over $\CC$.
 	 	   By  the Chevalley--Weil theorem (Theorem \ref{thm:chev_weil}), as $Z\to Y\to X$ is proper \'etale, we conclude that $X$ is arithmetically hyperbolic over $\CC$.
 	 \end{proof}

 \subsection{Application to cubic threefolds} \label{sec:cubic_threefolds}
 Being able to pass to a proper \'etale cover in Theorem \ref{thm:criterion} is very natural for applications. Such  covers of moduli stacks often naturally arise by adding level structure to the objects parametrized by the stack, where $Y$ is usually even a scheme. In practice, morphisms to $\mathcal A_{g,\mathbb C}^{\textrm{an}}$ arise via period maps, and such a morphism being quasi-finite translates to a (local) Torelli theorem.

  We give an application of this type, which is a new proof of the arithmetic hyperbolicity of the moduli of smooth cubic threefolds \cite[Thm.~1.1]{JL}. This proof is much simpler than the proof given in \cite{JL}, as it avoids the need to create an algebraic theory of intermediate Jacobians, i.e.~it avoids the need to construct the intermediate jacobian $\mathcal{C}_{(3;3),\CC} \to \mathcal{A}_{5,\CC}$ as a morphism of algebraic stacks over $\CC$ (hence also avoids the need for an arithmetic theory of intermediate Jacobians, given by a morphism of stacks $\mathcal{C}_{(3;3),\QQ} \to \mathcal{A}_{5,\QQ}$).

\begin{theorem}
	The stack of smooth cubic threefolds $\mathcal{C}_{(3;3),\CC}$
	 is arithmetically hyperbolic over $\CC$.
\end{theorem}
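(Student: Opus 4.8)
The plan is to deduce this from the transcendental criterion (Theorem \ref{thm:criterion}). The stack $\mathcal{C}_{(3;3),\CC}$ is a smooth finite type separated Deligne--Mumford stack over $\CC$ (smooth cubic threefolds have finite automorphism group), hence finitely presented; so it suffices, taking $Y = \mathcal{C}_{(3;3),\CC}$ and $Y \to \mathcal{C}_{(3;3),\CC}$ the identity, to produce a quasi-finite holomorphic map $p \colon \mathcal{C}_{(3;3),\CC}^{\an} \to \mathcal{A}_{5,\CC}^{\an}$. This $p$ will be the period map furnished by the intermediate Jacobian, whose construction and quasi-finiteness are classical.

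First I would recall the relevant Hodge theory: for a smooth cubic threefold $V \subset \PP^4_\CC$ one has $h^{3,0}(V) = 0$ and $h^{2,1}(V) = h^{1,2}(V) = 5$, so $\HH^3(V,\ZZ)$ underlies a weight-one Hodge structure of rank $10$, and the cup-product pairing $\HH^3(V,\ZZ) \times \HH^3(V,\ZZ) \to \ZZ$ is a unimodular alternating form; this equips the intermediate Jacobian $J(V) := \HH^3(V,\CC)/(F^2\HH^3(V,\CC) + \HH^3(V,\ZZ))$ with the structure of a $5$-dimensional principally polarized abelian variety (Clemens--Griffiths). I would then promote this to a morphism of analytic stacks. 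Writing $\mathcal{C}_{(3;3),\CC} \cong [U/\PGL_{5,\CC}]$ with $U \subset \PP(\HH^0(\PP^4,\OO(3)))$ the open locus of smooth cubics, the local system $R^3\pi_*\ZZ$ of the universal family over $U^{\an}$, together with its Hodge filtration and polarization, is a polarized variation of Hodge structure of weight one, equivalently a family of $5$-dimensional principally polarized abelian varieties over $U^{\an}$; this datum is canonically $\PGL_{5,\CC}$-equivariant, so descending along the smooth cover $U^{\an} \to \mathcal{C}_{(3;3),\CC}^{\an}$ (using that $\mathcal{A}_5^{\an}$ is the analytic moduli stack of $5$-dimensional principally polarized abelian varieties) produces the desired holomorphic morphism $p$, which on points sends a cubic threefold to its intermediate Jacobian.

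It then remains to check that $p$ is quasi-finite, and this is the one place where genuine geometric input enters: by the Torelli theorem for cubic threefolds (Clemens--Griffiths) the differential of the period map is everywhere injective---equivalently $p$ is unramified---so, being a morphism of finite type analytic Deligne--Mumford stacks, $p$ is quasi-finite. (Alternatively, the global Torelli theorem shows $p$ is injective on isomorphism classes, which together with the finiteness of automorphism groups of smooth cubic threefolds again forces $p$ to be quasi-finite.) Applying Theorem \ref{thm:criterion} with $Y = X = \mathcal{C}_{(3;3),\CC}$, the identity $Y \to X$, and the map $p$, we conclude that $\mathcal{C}_{(3;3),\CC}$ is arithmetically hyperbolic over $\CC$. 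I expect the only delicate bookkeeping to be the descent in the construction of $p$---verifying the cocycle condition over $(U \times_{\mathcal{C}_{(3;3),\CC}} U)^{\an}$---which is routine since the intermediate Jacobian is functorial in $V$; in contrast with \cite{JL}, no algebraic (let alone arithmetic) model of the intermediate Jacobian morphism is needed, since Theorem \ref{thm:criterion} absorbs level structures (stacky Riemann existence) and algebraization (Borel's theorem) internally.
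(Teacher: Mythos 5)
Your argument is essentially correct and invokes the same criterion (Theorem \ref{thm:criterion}), but it takes a genuinely different route in how the quasi-finite holomorphic map to $\mathcal{A}_{5,\CC}^{\an}$ is produced. You take $Y=X=\mathcal{C}_{(3;3),\CC}$ with the identity as the proper \'etale morphism and build the period map \emph{at the level of analytic stacks}, by descending the family of intermediate Jacobians along $U^{\an}\to[U/\PGL_{5,\CC}]^{\an}$; this forces you to use a moduli interpretation of $\mathcal{A}_{5,\CC}^{\an}$ (as $[\mathbb{H}_5/\mathrm{Sp}_{10}(\ZZ)]$, receiving analytic families of principally polarized abelian varieties) and to verify $2$-descent data, which you flag as routine but which is precisely the kind of stack-level construction of the intermediate Jacobian morphism the paper is trying to sidestep (here only analytically, whereas the comparison with \cite{JL} concerns the algebraic version, so your route is still much lighter than \cite{JL}). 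The paper instead exploits the fact that Theorem \ref{thm:criterion} allows a nontrivial cover: since $\mathcal{C}_{(3;3),\CC}$ is uniformisable by an affine scheme \cite{JLLevel}, there is a finite \'etale cover $Y\to X$ with $Y$ an affine \emph{variety}, and then the period map $Y^{\an}\to\mathcal{A}_{5,\CC}^{\an}$ is the classical one attached to a polarized variation of Hodge structure on a variety, with no stacky descent at all. Your approach buys a more intrinsic map (no auxiliary cover, no appeal to \cite{JLLevel}); the paper's buys a shorter justification, since all the stack-theoretic absorption is left to the criterion itself.

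One step in your write-up deserves more care: ``the differential is everywhere injective, so $p$ is unramified, so it is quasi-finite'' is not automatic, since an unramified (immersive) holomorphic map from a non-compact finite type source can a priori have infinite discrete fibres (think of a closed discrete infinite subset of $\CC$). The paper also passes from infinitesimal Torelli to finiteness of fibres, but it does so by citing \cite[Thm.~2.8]{JL}, where this is established; if you do not want to invoke that, your alternative via the global Torelli theorem of Clemens--Griffiths (injectivity on isomorphism classes, plus finiteness of automorphism groups of smooth cubic threefolds) is the cleaner way to get quasi-finiteness and suffices for Theorem \ref{thm:criterion}.
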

\begin{proof}  Let $X:=\mathcal{C}_{(3;3),\CC}$. Since $X$ is uniformisable by an affine scheme \cite{JLLevel}, there is  an affine variety $Y$ over $\CC$ and a finite \'etale morphism $Y\to X$. Let $\mathbb{V}$ be the polarized variation of Hodge structures on $Y$ associated to the pull-back of the universal family $\mathcal{U}\to X$ along $Y\to X$. Let $Y^{\an}\to \mathcal{A}_{5,\CC}^{\an}$ be the associated period map. By infinitesimal Torelli for smooth cubic threefolds, the latter morphism is injective on tangent spaces. In particular, it has finite fibres   (see for instance \cite[Thm.~2.8]{JL}).  
Therefore, the arithmetic hyperbolicity of $X$ over $\CC$ follows from the transcendental criterion (Theorem \ref{thm:criterion}). 
\end{proof}

\bibliography{refsci}{}
\bibliographystyle{plain}

\end{document}